\documentclass[reqno,12pt]{amsart}
\newtheorem{thm}{Theorem}[section]

\newtheorem{cor}[thm]{Corollary}
\newtheorem{lem}[thm]{Lemma}

\newtheorem{pro}[thm]{Proposition}
\newtheorem{rem}[thm]{Remark}

\usepackage[letterpaper]{geometry}
\usepackage{setspace}
\usepackage{lmodern}
\usepackage{mathdots}
\usepackage[T1]{fontenc}
\usepackage{amssymb}
\usepackage{mdwlist}
\usepackage{multirow}
\usepackage{amsmath,amssymb,epsfig}
\usepackage{amsmath}
\usepackage{type1cm}
\usepackage{url}
\usepackage{enumerate}
\usepackage[all]{xy}
\usepackage{epsfig}
\usepackage{color}

\newcommand{\bpm}{\begin{pmatrix}}
\newcommand{\epm}{\end{pmatrix}}
\newcommand{\bsm}{\begin{smallmatrix}}
\newcommand{\esm}{\end{smallmatrix}}
\newcommand{\bspm}{\left(\begin{smallmatrix}}
\newcommand{\espm}{\end{smallmatrix}\right)}
\newcommand{\bm}{\begin{matrix}}
\renewcommand{\em}{\end{matrix}}
\newcommand{\bbm}{\begin{bmatrix}}
\newcommand{\ebm}{\end{bmatrix}}
\setcounter{MaxMatrixCols}{13}

\newcommand{\whitt}{\c W_{\psi_N}(\pi)}

\newcommand{\bs}{\backslash}

\newcommand{\lt}{{}\,_{t}}
\usepackage{MnSymbol}

\newcommand{\mnsw}[1]{{{}^2 \wedge_{#1}}}
\newcommand{\Sp}{\wt{Sp}}
\usepackage{hyperref}

\newcommand{\C}{\mathbb{C}}
\newcommand{\G}{\mathbb{G}}
\newcommand{\A}{\mathbb{A}}

\newcommand{\Z}{\mathbb{Z}}
\newcommand{\R}{\mathbb{R}}

\newcommand{\Tr}{\operatorname{Tr}}

\newcommand{\diag}{\operatorname{diag}}
\newcommand{\pr}{\operatorname{pr}}

\newcommand{\Ind}{\operatorname{Ind}}

\newcommand{\sym}{\operatorname{sym}}

\newcommand{\nInd}{\ul{\on{Ind}}}

\newcommand{\la}{\langle}
\newcommand{\ra}{\rangle}

\newcommand{\ve}{\varepsilon}
\newcommand{\vph}{\varphi}

\newcommand{\on}{\operatorname}
\newcommand{\ol}{\overline}
\newcommand{\ul}{\underline}
\renewcommand{\Re}{\on{Re}}

\newcommand{\gm}{\gamma}

\newcommand{\sg}{\sigma}

\newcommand{\il}{\int\limits_}
\newcommand{\iq}[1]{\il{[#1]}}

\newcommand{\wt}{\widetilde}
\newcommand{\Mat}{\on{Mat}}
\newcommand{\f}{\mathfrak}
\renewcommand{\c}{\mathcal}

\newcommand{\ud}[1]{\, d #1\,}
\newcommand{\II}{I\!\!I}
\newcommand{\III}{I\!\!I\!\!I}
\newcommand{\Flat}{\on{Flat}}
\newcommand{\Jac}{\on{Jac}}

\begin{document}


\title[Twisted Spinor $L$-function]
{A Multi-variable Rankin-Selberg Integral for a 
Product of $GL_2$-twisted Spinor $L$-functions}
\author{Joseph Hundley, Xin Shen}
 \address{
 Department of Mathematics\\
244 Mathematics Building \\
University at Buffalo\\
Buffalo, NY 14260-2900}
\address{jahundle@buffalo.edu}
\address{
 Department of Mathematics\\
University of Toronto \\
Bahen Centre \\
40 St. George St., Room 6290\\
Toronto, Ontario \\
CANADA \\
M5S 2E4}
 \address{shenx125@math.utoronto.ca}
\thanks{This paper was written while the first named author was supported
by NSF Grant DMS-1001792.}

\maketitle
\section{Introduction}
\label{sec: intro}
An important problem in the theory of automorphic 
forms is to understand periods, and how they are 
related with $L$-functions and their special 
values, as well as with functorial liftings.
A prototypical example for this is 
the connection between
symmetric and exterior square
$L$-functions, functorial liftings from classical 
groups to $GL_n,$ and certain periods, 
which dates back at least to 
 \cite{JacquetShalika-Ext2}, 
 and is more fully explicated in 
\cite{GRS-PeriodsPoles}, and 
\cite{GRS-Book}.
Some more exotic examples are found, 
for example, in \cite{GH-spin8}, \cite{GH-spin10xsl2},
and a general framework which extends 
beyond classical groups is discussed in 
  \cite{GJS-PeriodsPoles}.
  
The connection between poles and liftings is well-
understood, at least philosophically: one 
expects that the $L$ function attached to a generic
cuspidal representation $\pi$ and a finite dimensional
representation $r$ of the relevant $L$-group will 
have a pole at $1$ if and only if the stabilizer of a 
point in general position for the representation $r$ is 
reductive and $\pi$ is in the image of the functorial lifting 
attached to the inclusion of the stabilizer of such a point.
For example, in the exterior square representation of $GL_{2n}(\C),$ the stabilizer of a point in general position
is $Sp_{2n}(\C),$ so one expects that a pole of the exterior 
square 
$L$ function indicates cuspidal representations which are
lifts from $SO_{2n+1}.$ 
The connection with periods is up to now less well 
understood.

In order to prove the expected relationship between 
poles and liftings in specific examples, and in 
order to draw periods into the picture, it is 
useful, perhaps essential, to have some sort of an 
analytic handle on both the $L$-function and the 
lifting. An analytic handle on the $L$-function may 
be provided by an integral representation, either 
of Langlands-Shahidi type or otherwise (integral 
representations which are not of Langlands-Shahidi 
type are often termed ``Rankin-Selberg''). 
An analytic handle on the lifting may be provided 
by an explicit construction.

Integral representation of $L$ functions and explicit construction of liftings between automorphic 
forms on different groups are important subjects 
in their own right as well. For example, integral 
representations are, as far as we know, the only way to 
establish analytic properties of $L$ functions in new 
cases. When an 
integral representation produces $L$ functions whose
analytic properties are already well understood, it 
nevertheless provides a new insight into the connection 
with periods, and identities among periods which can 
be otherwise quite surprising. This is the case 
in the present paper.

For explicit construction of liftings, there are two main 
ideas we know of. Each is related to the other and 
both are related to the theory of Fourier coefficients
attached to nilpotent orbits (\cite{GRS-FoCo},\cite{conj-unip}). 

The first main idea is to use a ``small'' representation 
as a kernel function. The prototypical example of this 
type is the classical theta correspondence  (\cite{Howe}).
In this type of construction, an automorphic form, 
which is defined on a large reductive group $H$ 
is restricted to a 
pair of commuting reductive subgroups, and 
integrated against automorphic forms on one member
of the pair to produce automorphic forms on the other.
In general, there is no reason such a construction 
should preserve irreducibility, much less be 
functorial. The right approach 
seems to be to take automorphic 
forms on $H$ which only support Fourier coefficients attached to very small 
nilpotent orbits.
For example,
a theta function, defined on the 
group $\wt{Sp}_{4mn}(\A)$ 
only supports Fourier coefficients attached to the minimal 
nilpotent orbit of this group. 
Its restriction to 
$Sp_{2n}(\A) \times O_{2m}(\A)$ provides a kernel 
for the theta lifting between these groups. 
Functoriality of this lifting was established in 
\cite{rallis}. This method has enjoyed brilliant 
success, but also has significant limitations. 
For example, it is not at all clear how the classical theta
correspondence could be extended to other groups 
of type $C_n\times D_m$: the embedding 
into $Sp_{4mn}$ is specific to $Sp_{2n} \times O_{2m}.$ 
The results of this paper hint at a possible way around this
difficulty.

The second main idea in 
explicit construction of correspondences
 is the descent method
of Ginzburg, Rallis, and Soudry (\cite{GRS-Book}, see
also \cite{Hundley-Sayag}). This construction treats the 
Fourier coefficients themselves essentially as 
global twisted Jacquet modules, mapping representations
of  larger reductive groups
to representations
of smaller reductive groups. 
As before, in general there is no reason 
for this construction to respect irreducibility, much less
be functorial, and a delicate calculus involving 
Fourier coefficients seems to govern when 
it is.

In this paper we define and study two new multi-variable 
Rankin-Selberg integrals, which are defined on the 
similitude orthogonal groups $GSO_{12}$ and $GSO_{18}.$  
These integrals 
are similar to  those
considered in  
\cite{BG-AdGL4},
\cite{G96-AdGSp4},
\cite{G95-Sym4GL2},
\cite{GH-G2Doubling},
\cite{GH-F4Survey},
in that each
involves applying a Fourier-Jacobi 
coefficient to a degenerate Eisenstein series and then 
pairing the result with a cusp form defined on 
a suitable reductive subgroup.  To be precise, 
$GSO_{6n}$ has a standard parabolic subgroup $Q$ whose Levi  is isomorphic 
to $GL_{2n} \times GSO_{2n}.$ 
The unipotent radical is a two step nilpotent group and the set
of characters of the 
center may be thought of as the exterior square representation 
of $GL_{2n}$ twisted by the similitude factor of $GSO_{2n}.$  
The stabilizer of a character in general position 
is isomorphic to 
$$C:=
\{ (g_1, g_2) \in GSp_{2n} \times GSO_{2n}: 
\lambda( g_1) = \lambda( g_2^{-1})\}.
$$
Here, $\lambda$ denotes the similitude factor.  
The choice of a character in general position as above 
also determines a projection of 
the unipotent radical onto a Heisenberg 
group in $4n^2+1$ variables, and a compatible 
embedding of $C$ into $Sp_{4n^2}.$

Our Fourier-Jacobi coefficient defines a map from automorphic 
functions on $GSO_{6n}(\A)$ to automorphic functions
on $C(\A).$  In the case $n=2$ and $3$ we apply this coefficient
to a degenerate Eisenstein series on $GSO_{6n}(\A)$ induced 
from a character of the parabolic subgroup $P$ 
whose Levi factor is isomorphic to $GL_3 \times GL_{3n-3} \times GL_1.$  We then pair the result with a pair of cusp forms 
defined on $GSp_{2n}(\A)$ and $GSO_{2n}(\A)$ respectively.  
The results suggest an intriguing connection with the theta correspondence for similitude groups.  

Indeed, in the case $n=2,$ the global integral turns out to 
be Eulerian, and to give an integral representation 
of 
$$
L^S( s_1, \wt \Pi \times \tau_1) L^S( s_2, \wt\Pi \times \tau_2),
$$
where $\Pi$ is a generic cuspidal automorphic representation 
of $GSp_4(\A)$ and $\tau_1, \tau_2$ are two (generic) cuspidal 
automorphic representations of $GL_2(\A)$ having the 
same central character, so that $\tau_1 \otimes \tau_2$ is 
a (generic) cuspidal 
automorphic representation of $GSO_4(\A).$  
It follows that the original integral has poles along both the
plane $s_1=1$ and the plane $s_2=1$ if and only if 
$\Pi$ is the weak lift of $\tau_1 \otimes \tau_2$ corresponding to 
 the 
embedding 
$$
GSpin_4(\C) = \{ (g_1, g_2)\in GL(2, \C)^2 \mid \det g_1 = \det g_2 \} 
\hookrightarrow GSpin_5(\C) = GSp_4(\C).
$$
It is known that the functorial lift corresponding to the 
embedding 
$$
SO_4(\C) \hookrightarrow SO_5(\C)
$$
is realized via the theta correspondence. 
Our Eulerian integral suggests that the Fourier-Jacobi coefficient 
of the double residue of our 
Eisenstein series provides a kernel for the theta 
correspondence for similitude groups.  
This is particularly intriguing since the Fourier-Jacobi coefficient 
construction extends directly to any group
of type $D_{3n},$ whereas there seems to be no 
hope of extending the theta correspondence to any representations of such groups which do not factor through the orthogonal quotient in any direct way.

The integral corresponding to $n=3$ provides some more 
evidence for a connection with the theta correspondence, in that the global integral unfolds to a period of $GSp_6$ 
which is known to be nonvanishing precisely on the image of 
the theta lift from $GSO_6$  (\cite{GRS-PeriodsPoles}).  

We now describe the contents of this paper. In 
section \ref{sec: notation} we fix notation and 
describe a family of global integrals, indexed 
by positive integers $n.$ 
In section \ref{sec: global} we unfold the global 
integral corresponding to the case $n=2,$ 
obtaining a global integral
involving the Whittaker function of the cusp form 
involve which, formally, factors as a product 
of local zeta integrals.
These local zeta integrals are studied in sections
\ref{sec: local zeta integrals},
\ref{section: unramified calculation},
\ref{section: local zeta integrals ii},
after certain algebraic results required
for the unramified case are established in 
section  \ref{sec: prep for unram}.
Once the local zeta integrals have been 
studied we return to the global setting for sections
\ref{section: global identity} and \ref{section: application},
where we record the global identity relating 
the original zeta integral and $
L^S( s_1, \wt \Pi \times \tau_1) L^S( s_2, \wt\Pi \times \tau_2),
$, and deduce a new identity relating poles 
of these $L$ functions and periods. 
Finally, in section \ref{section: GSO18}, we briefly describe 
what happens in the case $n=3,$ omitting details. 
We remark that the case $n=1$ is somewhat degenerate, 
as the split form of $GSO_2$ is a torus;  
our global integral appears to vanish identically in this case.

This work was undertaken while the authors were  
visiting ICERM for a program on 
Automorphic Forms, 
Combinatorial Representation Theory and Multiple Dirichlet 
Series. They thank ICERM and the organizers.

\section{Notation}\label{sec: notation}
Write $J_n$ for the matrix 
$$
\begin{pmatrix} &&1 \\ &\iddots &\\ 1&&\end{pmatrix}.
$$
If $g$ is an $n\times m$ matrix, write 
 $^tg$ for the transpose of  $g$ and 
$_tg$ for the ``other transpose,'' $J_m{}\,^tgJ_n.$ 
Let $g^* = \,_tg^{-1}.$
Let $G=GSO_{n}$ denote the identity component of 
$GO_{n}:= \{ g \in GL_{n}:  gJ_{n} \,^tg \in GL_1\cdot  J_{n}\}.$
If $n$ is odd, then $GO_n$ is the product 
of $SO_n$ and the center of $GL_n.$  
If $n$ is even, then $GSO_n$ 
is the semidirect product of $SO_{n}$ and 
$\{ \diag( \lambda I_{\frac n2}, I_{\frac n2}): \lambda \in GL_1\}.$  
Here $I_k$ is the $k \times k$ identity matrix.  
The group $GSO_{n}$ 
has a rational character $\lambda: GSO_{n} \to GL_1,$
called the similitude factor, such that 
$$
gJ_{n} \,^tg  = \lambda(g) \cdot J_{n}, 
\qquad ( g \in GSO_n).
$$
The set of upper triangular (resp. diagonal) elements of $GSO_{n}$ is a Borel subgroup (resp. split maximal 
torus) which we denote $B_{GSO_n}$ (resp. $T_{GSO_n}$).  
  A parabolic (resp. Levi) subgroup will be said to be standard if it 
contains $B_{GSO_n}$ (resp. $T_{GSO_n}$).   
The unipotent radical of $B_{GSO_n}$ will be 
denoted $U.$ We number the simple (relative to $B_{GSO_{2n}}$) roots of $T_{GSO_{2n}}$ in 
$G$ $\alpha_1, \dots, \alpha_n$ so that 
$t^{\alpha_i} = t_{ii}/t_{i+1,i+1}$ for $1 \le i \le n-1,$
and $t^{\alpha_n} = t_{n-1,n-1}/t_{n+1,n+1}.$
Here, we have 
used the exponential notation for rational characters, i.e., written $t^\alpha$
instead of $\alpha(t)$
for the value of the root $\alpha$ on the torus element $t.$

Define $m_P:GL_{3}\times GL_{3(n-1)}\times GL_{1}$
into $GSO_{6n}$ by 
\begin{equation}
\label{Eq:ParametrizationOfMP}
 m_P (g_{1},g_{2},\lambda)\mapsto diag(\lambda g_{1}, \lambda g_{2},g_{2}^{*}, g_{1}^{*}). 
\end{equation}
Denote the image by $M_P.$
It is a standard Levi subgroup.  Let $P$ be 
the corresponding standard parabolic subgroup.  
Thus, 
 $P=M_{P}\ltimes U_{P},$ where $U_P$ is the unipotent radical.    We use \eqref{Eq:ParametrizationOfMP} to identify $M_P$ with $GL_{3}\times GL_{3n-3}\times GL_{1}.$

Recall that 
a character of $F^\times \bs \A^\times$ 
(i.e., a character of $\A^\times$
trivial on $F^\times$)
is normalized if it is trivial on the positive 
real numbers (embedded into $\A^\times$ diagonally at the finite places).  An arbitrary quasicharacter
 of $F^\times \bs \A^\times$ may be expressed 
 uniquely as the product of a normalized character
 and a complex power of the absolute value.  
  If $\chi = (\chi_{1},\chi_{2},\chi_{3})$ is a triple of  normalized characters of $F^\times \bs \A^\times$
and $ s = (s_1, s_2 , s_3) \in \C^3,$  
write $(\chi; s)$ for the quasicharacter
$M_P(\A) \to \C$ by 
\begin{equation}\label{ def: (chi; s)}
(\chi;s)(g_{1},g_{2},\lambda):=\chi_1(\det g_{1})|\det g_1|^{s_1}\chi_2(\det g_{2})|\det g_2|^{s_2}\chi_3(\lambda)|\lambda|^{s_3}.
\end{equation}
Then $(\chi;s)(aI_{6n})= (\chi;s)(m_P(a^{-1}I_3, a^{-1}I_{3(n-1)}, a^2))
= \chi_1^{-3}\chi_2^{3-3n}\chi_3^2(a)|a|^{2s_3-3s_1+(3-3n)s_2}.$
The pullback of $(\chi; s)$ to a quasicharacter
of $P(\A)$ will also be denoted $(\chi; s).$

Consider the family of induced 
representations $Ind_{P(\A)}^{G(\A)}(\chi;s)$ (non-normalized induction), for fixed 
$\chi$ and $s$ varying.  
Here, we fix a maximal 
compact subgroup $K$ of $G(\A)$ and consider
$K$-finite vectors.  
The map $Ind_{P(\A)}^{G(\A)}(\chi;s)\mapsto s$ 
gives this family the structure of a fiber bundle 
over $\C^3.$  By a section we mean a function $\C^3 \times G(\A)\to \C,$ written $(s, g) \mapsto f_{\chi; s}(g),$
such that $f_{\chi;s } \in Ind_{P(\A)}^{G(\A)} (\chi; s)$
for each $s \in \C^3.$ 
A section $f_{\chi; s}$ is flat if the restriction 
of $f_{\chi; s}$ to $K$ is independent of $s.$ 
Write $\on{Flat}(\chi)$ for the space of 
flat sections.

For such a function $f_{\chi;s},$ let
$E(f_{\chi;s},g)$ be the corresponding Eisenstein series, defined
by
\begin{equation*}
  E(f_{\chi;s},g)=\sum_{\gamma\in P(F)\backslash G(F)}f_{\chi;s}(\gamma g)
\end{equation*}
when this sum is convergent and by meromorphic 
continuation elsewhere.  The sum is convergent 
for $\Re(s_1-s_2)$ and $\Re(s_2)$ both sufficiently 
large. (Cf. \cite{MW-EisensteinSeriesBook}, \S II.1.5.)

Let $Q=M_{Q}\ltimes U_{Q}$  
be the unique standard parabolic subgroup of $G$,
such that  $M_{Q}\cong GL_{2n}\times GSO_{2n}.$ 
We identify  $M_Q$ with $GL_{2n}\times GSO_{2n}$ via 
the isomorphism
\begin{equation}\label{Eq:CoordsForMQ}
  m_Q(g_{1},g_{2}):= diag(\lambda(g_2) g_{1}, g_{2}, g_{1}^{*}), \qquad (g_1 \in GL_{2n}, \; g_2 \in GSO_{2n}).
\end{equation}
The unipotent radical, $U_Q,$ of $Q$
can be described as 
$$
\left \{ 
\bpm I_{2n}&X&Y&Z'\\ 0&I_n&0&-\lt Y\\ 0&0&I_n&-\lt X\\
0&0&0&I_{2n} \epm:
Z' + X\lt Y + Y\lt X + \lt Z' = 0
\right \}.
$$
Let $\mnsw{2n}: = \{ Z \in \Mat_{{2n}\times {2n}}: \lt Z = -Z\}.$
Then we can  define a bijection (which is not a homomorphism)
$u_Q: \Mat_{{2n}\times n}\times \Mat_{{2n}\times n} \times \mnsw{2n} 
\to U_Q$ by 
$$
u_Q(X,Y,Z) = 
\bpm I_{2n}&X&Y&Z-\frac 12(X\lt Y + Y \lt X)\\ 0&I_n&0&-\lt Y\\ 0&0&I_n&-\lt X\\
0&0&0&I_{2n} \epm,
\qquad X, Y \in \Mat_{2n\times n}, \; Z \in \mnsw{2n}.
$$  
Then
$$
u_Q(X,Y,Z)u_Q(U,V,W)
= u_Q(X+U,Y+V, Z+W - \la X,V\ra + \la U,Y\ra ),
$$
$(X,Y,U,V \in \Mat_{n\times {2n}}, Z,W \in \mnsw{2n}),$
where $$
\la A,B \ra := A\lt B- B \lt A, \qquad (A,B \in \Mat_{n\times {2n}}).
$$
It follows that 
$u_Q(X,Y,Z)^{-1} = u_Q(-X,-Y,-Z),$
and that if $[x,y]=xyx^{-1}y^{-1}$ denotes the commutator, then 
$
\left[ u_Q(X,0,0), \; \, u_Q(0,Y,0)\right] 
= u_Q( 0,0, -X\lt Y +Y\lt X)=u_Q(0,0,\la Y,X\ra ).
$
Define $l(Z) = Tr( Z\cdot diag(I_n,0))=\sum_{i=1}^n Z_{i,i}$. 
For $n \in \Z$ define 
$\c H_{2n+1}$ to be $\G_a^n \times \G_a^n \times \G_a$
equipped with the product
$$(x_1, y_1, z_1) (x_2, y_2, z_2) = 
(x_1+x_2, y_1+y_2, z_1+z_2+x_1 \lt y_2 - y_1 \lt x_2).$$
Write $r$ for the map from $\Mat_{2n \times n}$
to row vectors corresponding 
to unwinding the rows: 
$r(X) := x_{1,1},\dots x_{1,n},x_{2,1},
  \dots x_{2n,n},$ and write $r'$ 
 for the similar map which 
 unwinds the rows and negates the last 
 $n.$  Explicitly: 
 $$r' (Y) = r\bspm Y_1 \\ - Y_2 \espm =   (y_{1,1},\dots y_{1,n},y_{1,1},
  \dots y_{n,n},-y_{n+1,1},\dots, -y_{2n,n})$$
  for $Y = \bspm Y_1 \\ Y_2 \espm \in \Mat_{2n\times n}, \; Y_1, Y_2 \in \Mat_{n\times n}.$
Then we can define a homomorphism from $U_{Q}$ to $\mathcal{H}_{4n^2+1}$, the Heisenberg group with $4n^2+1$ variables, by
\begin{equation*}
  j(u_Q(X,Y,Z))=(r(X), r'(Y)
   ,l(Z)).
\end{equation*}
The stabilizer of $l$ in $M_{Q}$ is $$C_{Q}:= (GSp_{2n}\times GSO_{2n})^{\circ}=\lbrace(g_{1},g_{2})\in GSp_{2n}\times GSO_{2n}\mid \lambda(g_{1})=\lambda(g_{2})^{-1}\rbrace,$$ where $\lambda(g_{i})$ is the similitude of $g_{i}$. 
For any subgroup $H$ of $GSp_{2n}\times GSO_{2n},$ let
$H^\circ := H \cap C_Q.$ 
The kernel of $l$ is a $C_Q$-stable subgroup of $U_Q,$
and is also equal to the kernel of $j.$  
Note that $\lambda(g_2)$ is also the similitude factor of $(g_1,g_2)$ 
as an element of $GSO_{6n},$
and that the center of $C_Q$ is equal to that of $GSO_{6n}.$ 
Define $T= T_{GSO_n} \cap C_Q, B = B_{GSO_n} \cap C_Q$
and $N = U \cap C_Q.$ They are a split 
maximal torus, Borel subgroup, and maximal unipotent 
subgroup of $C_Q,$ respectively.

The group of automorphisms of $\c H_{4n^2+1}$ whose restrictions
to the center of $\c H_{4n^2+1}$ are the identity is isomorphic 
to $Sp_{4n^2}.$  Identifying the two groups defines a semidirect
product $Sp_{4n^2} \ltimes \c H_{4n^2+1}.$  
Let $R_Q=C_{Q}\ltimes U_{Q}.$  The homomorphism $j: U_Q \to \c H_{4n^2+1}$
extends to a homomorphism $R_Q \to Sp_{4n^2} \ltimes \c H_{4n^2+1}.$  
Indeed, for each $c \in C_Q,$ the automorphism of $U_Q$
defined by conjugation by $c$ preserves the kernel 
of $j,$ and therefore induces an automorphism of $\c H_{4n^2+1}.$  
Moreover, this automorphism is identity on the center of 
$\c H_{4n^2+1}$ because $c$ fixes $l.$  This induces a 
homomorphism $C_Q \to Sp_{4n^2},$ 
which we denote by the same symbol $j,$ and which 
has the defining property that 
$j(cuc^{-1}) = j(c) j(u) j(c)^{-1}$ for all $c \in C_Q$ and $u \in U_Q.$
We may then regard the two homomorphisms together as 
a single homomorphism (still denoted $j$) from 
$R_Q$ to $Sp_{4n^2} \ltimes \c H_{4n^2+1}.$

For a positive integer $M,$ identify the Siegel Levi of $Sp_{2M}$ with 
$GL_M$ via the map $\bspm g& \\ &g^* \espm 
\mapsto g.$  
It acts on $\c H_{2M+1}$ 
by $g(x,y,z)g^{-1} = (x g^{-1}, y g , z).$  
Note that for $g_1 \in GSp_{2n}$ 
and $g_2 \in GSO_{2n},$ the matrix $m_Q(g_1, g_2) \in M_Q$ maps into 
$GL_{2n^2}$ if and only if 
it normalizes $\{ u_Q(X,0,0): X \in \Mat_{2n\times n}\},$ i.e., if and only if $g_2$ is of the form 
$\bspm \lambda(g_1^{-1}) g_3 &\\ &g_3^*\espm$
for $g_3 \in GL_n.$  
Write 
\begin{equation}\label{eq: def mQ1(g1, g2)}
m_Q^1( g_1, g_2) := m_Q( g_1, \bpm \lambda(g_1)^{-1} g_2 & \\ & g_2^* \epm), \qquad ( g_1 \in GSp_{2n}, \; g_2 \in GL_n).
\end{equation}
Then 
$$m_Q^1( g_1, g_2)
u_Q(X,0,0) m_Q^1( g_1^{-1}, g_2^{-1})
=
u_Q( g_1 X g_2^{-1},0,0), \qquad 
( \forall g_1 \in GSp_{2n},\; g_2 \in GL_n),
$$
and 
$j(m_Q^1(g_1, g_2)) \in GL_{2n^2}\subset Sp_{4n^2}$ is 
the matrix satisfying 
$$
r(X) j(m_Q^1(g_1, g_2))
= r(g_1^{-1} X g_2).
$$
The determinant map $GL_{2n^2} \to GL_1$
pulls back to a rational character 
of this subgroup of $C_Q$ which we denote
by $\det.$ \label{s:def of det}
Thus
$\det(m_Q^1( g_1, g_2)) = \det g_1^{-n}\det g_2^{2n} = \lambda( g_1)^{-n^2} \det g_2^{2n}$
for $g_1 \in GSp_{2n}, g_2 \in GL_n.$
On $T\subset C_Q,$ 
the rational character $\det$ coincides with the
restriction of the 
sum of the roots of 
$T_{GSO_{6n}}$
 in $\{ u_Q(0,Y,0):Y \in \Mat_{2n\times n}\}.$

Let $\psi$ be a additive character on $F\backslash \A$ and $\psi_{l}(Z):=\psi\circ l$.
The group $\c H_{4n^2+1}(\A)$ has a unique (up to isomorphism) unitary representation,
$\omega_\psi,$ 
with central character $\psi,$ which extends to a projective 
representation of $\c H_{4n^2+1}(\A) \rtimes Sp_{4n^2}(\A)$ or a 
genuine representation of $\c H_{4n^2+1}(\A) \rtimes \Sp_{4n^2}(\A),$
where $\Sp_{4n^2}(\A)$ denotes the metaplectic double cover.

\begin{lem}
The homomorphism $j:C_Q(\A) \to Sp_{4n^2}(\A)$ 
lifts to a homomorphism $C_Q(\A) \to \Sp_{4n^2}(\A).$
\end{lem}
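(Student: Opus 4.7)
The plan is to reduce to the standard splitting of the metaplectic cover over the even-orthogonal dual pair $(Sp_{2n},O_{2n})$ and then extend this splitting across the one-parameter family of similitudes. The underlying observation is that $j$ realises $C_Q$ as a similitude version of the tensor-product embedding $Sp(V_1)\times O(V_2)\hookrightarrow Sp(V_1\otimes V_2)$, where $V_1$ is the defining $2n$-dimensional symplectic representation and $V_2$ the defining $2n$-dimensional orthogonal representation, and the ambient $4n^2$-dimensional symplectic space is $V_1\otimes V_2$.

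First, I would restrict $j$ to the normal isometry subgroup $D:=\{(g_1,g_2)\in C_Q: \lambda(g_1)=\lambda(g_2)=1\}\cong Sp_{2n}\times SO_{2n}$. On $D(\A)$, $j$ agrees with the classical dual-pair embedding $Sp(V_1)\times O(V_2)\hookrightarrow Sp(V_1\otimes V_2)$. Because $\dim V_2=2n$ is even, Kudla's splitting theorem (or a direct computation with the Ranga Rao cocycle) produces a canonical lift $\widetilde j_0 : D(\A)\to\Sp_{4n^2}(\A)$ of $j|_{D(\A)}$.

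Next, I would extend $\widetilde j_0$ across similitudes. The quotient $C_Q/D$ is isomorphic to $GL_1$ via $\lambda$, and one can choose a concrete section $\sigma : GL_1 \to C_Q$, for instance $\sigma(a) = m_Q^1(\diag(aI_n,I_n),I_n)$ in the notation of \eqref{eq: def mQ1(g1, g2)}. Then $j(\sigma(a))$ lies inside the Siegel Levi $GL_{2n^2}\subset Sp_{4n^2}$, and the metaplectic cover admits a canonical lift over such block-scalar elements of the Siegel Levi (a standard consequence of the Weil-representation section attached to the Schr\"odinger model); this yields $\widetilde\sigma(a)\in\Sp_{4n^2}(\A)$. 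One then sets $\widetilde j(d\,\sigma(a)):=\widetilde j_0(d)\widetilde\sigma(a)$.

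The main obstacle is to verify that $\widetilde j$ is actually a homomorphism. By the semidirect-product decomposition $C_Q=D\rtimes\sigma(GL_1)$, this reduces to two compatibilities: (i) $\widetilde\sigma$ is multiplicative on $GL_1(\A)$, and (ii) $\widetilde\sigma(a)\widetilde j_0(d)\widetilde\sigma(a)^{-1}=\widetilde j_0(\sigma(a)d\sigma(a)^{-1})$ for all $d\in D(\A)$, $a\in\A^\times$. Both follow from explicit manipulations with the Rao cocycle, using the crucial fact that $\sigma(a)$ acts on $V_1\otimes V_2$ by scalars on each of two complementary Lagrangians, so that the Hilbert-symbol contributions cancel in pairs. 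The a priori cohomological obstruction, the Weil index of $V_2$, vanishes because $\dim V_2$ is even, so no further obstacle appears.
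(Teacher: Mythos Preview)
Your approach is correct and follows essentially the same strategy as the paper: both first invoke Kudla's splitting of the metaplectic cover over the dual pair $Sp_{2n}\times O_{2n}$ (even orthogonal dimension), and then extend across similitudes. The paper simply cites \cite{Harris-Kudla}, section~5.1, for the similitude extension and notes that the global splitting is assembled from the local ones, whereas you spell out the mechanism of that extension by choosing an explicit section $\sigma:GL_1\to C_Q$ landing in the Siegel Levi and checking the cocycle compatibilities by hand; this is exactly the content of the Harris--Kudla argument, so the two proofs coincide in substance.
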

\begin{proof}
Write $\on{pr}$ for the canonical 
projection $\Sp_{4n^2}(\A)\to Sp_{4n^2}(\A).$    
We must show that the exact sequence
$$
1 \to \{ \pm 1\} \to \on{pr}^{-1}(j(C_Q(\A))) \to j(C_Q(\A)) \to 1
$$
splits, i.e., that the cocycle determined by any choice
of section is a coboundary.
The analogous result for $Sp_{2n} \times SO_{2n},$ 
over a local field is proved in 
\cite{kudla-notes}, corollary 3.3, p. 36,
or \cite{prasad-survey}, lemma 4.4, p. 12.
The extension to $C_Q$ follows from section 5.1 of 
\cite{Harris-Kudla}. The global statement then follows from 
the corresponding local ones.
\end{proof}

Thus we obtain a homomorphism 
$R_Q(\A) \to \Sp_{4n^2}(\A) \ltimes \c H_{4n^2+1}(\A)$ 
which we still denote $j.$  
Pulling $\omega_\psi$ back through $j$ produces a 
representation of $R_Q(\A)$ which we denote $\omega_{\psi,l}.$  
This representation can be realized on the space of Schwartz 
functions on $\Mat_{{2n}\times n}(\A)$ with action by 
$$
 [\omega_{\psi,l}(u_Q(0,0,Z)).\phi] = \psi_l(Z) \phi
\qquad  [\omega_{\psi,l}(u_Q(X,0,0)).\phi](\xi) = \phi( \xi+X)$$
\begin{equation}\label{eq: Weil rep fmlas}
  [\omega_{\psi,l}(u_Q(0,Y,0)).\phi](\xi) = 
\psi_l ( \la Y, \xi \ra )\phi(\xi)=
  \psi_l( Y  \,_t \xi-\xi \lt Y)\phi(\xi),\end{equation}
$$  [\omega_{\psi, l}( m_Q^1( g_1, g_2)).\phi](\xi)
  = \gm_{\psi, \det m_Q^1( g_1, g_2)} 
  |\det m_Q^1( g_1, g_2)|^{\frac 12}
  \phi( g_1^{-1} \xi g_2 ).
$$
(Cf. \cite{GRS-Book}, p. 8.)
Here $\gm_{\psi, a}$ denotes the Weil index. 
The representation $\omega_\psi$ has an automorphic 
realization via theta functions
$$
\theta(\phi, u \wt g):= \sum_{\xi \in \Mat_{{2n}\times n}(F)} [\omega_\psi( u \wt g).\phi](\xi),
\qquad (u \in \c H_{4n^2+1}(\A),\  \wt g \in \Sp_{4n^2}(\A)).
$$
Here $\phi \in \c S(\Mat_{{2n}\times n}(\A))$ (the Schwartz space of $\Mat_{{2n}\times n}(\A)$), $\c H_{4n^2+1}(\A)$ is identified with the quotient 
of $U_Q$ by the kernel of $l,$ and $Sp_{4n^2}$ is identified 
with the subgroup of its automorphism group consisting 
of all elements which act trivially on the center.

\begin{lem}\label{Heis Lemma}
Consider the Weil representation of $\c H_{2n+1}(\A)
 \rtimes \Sp_{2n}(\A)$
and its automorphic realization by theta functions.  Let 
$V$ be a subgroup of $\c H_{2n+1}$ which intersects
the center $Z$ trivially.  Thus $V$ corresponds to an 
isotropic subspace of the symplectic space $\c H_{2n+1}/Z.$ 
Let $V^\perp = \{ v' \in \c H_{2n+1}/Z: \la v', V\ra = 0\}
\supset V,$ and let $P_V$ be the parabolic subgroup 
of $Sp_{2n}$ which preserves the flag $0 \subset V \subset V^\perp
\subset \c H_{2n+1}/Z.$  
Note that the Levi quotient of $P_V$ is canonically isomorphic 
to $GL(V) \times Sp( V^\perp/V).$ Thus $P_V$ has a projection 
onto the group $GL_1 \times Sp( V^\perp/V)$ induced by 
the canonical map onto the Levi quotient and the 
determinant map $\det: GL(V) \to GL_1.$   The function 
$$
\wt g \mapsto \int_{[V]} \theta(\phi; v\wt g) \, dv 
$$
is invariant by the $\A$-points of the kernel of this map 
on the left.
(Throughout this paper, if $H$ is an algebraic group defined
over a global field $F,$ then $[H]:= H(F) \bs H(\A).$)
\end{lem}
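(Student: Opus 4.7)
The plan is to pass to a Schrödinger model in which the $[V]$-integration becomes partial integration of Schwartz functions. Since $V$ is isotropic in $W := \c H_{2n+1}/Z$, choose a Lagrangian $L \subset W$ containing $V$ and a complementary subspace $V^c$ with $L = V \oplus V^c$; then realize $\omega_\psi$ on $\c S(L(\A))$ with the paper's convention (cf.~\eqref{eq: Weil rep fmlas}) that $L$-elements of $\c H_{2n+1}$ act by translation. Splitting $\xi \in L(F)$ as $\xi = \xi_V + \xi_c$ and absorbing the sum over $\xi_V \in V(F)$ into the quotient integral over $v \in V(F)\bs V(\A)$, one unfolds
\begin{equation*}
\int_{[V]}\theta(\phi; v\wt g)\,dv = \sum_{\xi_c \in V^c(F)}\int_{V(\A)}[\omega_\psi(\wt g)\phi](v+\xi_c)\,dv.
\end{equation*}
Applying this formula to both $\wt g$ and $g_0\wt g$ (with $g_0$ in the kernel of $P_V \to GL_1 \times Sp(V^\perp/V)$) reduces the invariance statement to the identity
\begin{equation*}
\int_{V(\A)}[\omega_\psi(g_0)\phi'](v+\xi_c)\,dv = \int_{V(\A)}\phi'(v+\xi_c)\,dv
\end{equation*}
for every Schwartz $\phi' \in \c S(L(\A))$ and every $\xi_c \in V^c(\A)$.

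Since this identity is stable under multiplication, it suffices to verify it on generators of the kernel: $SL(V) \subset GL(V)$ in the Levi, and the unipotent radical $N_V$. For $g_0 \in SL(V)$, $g_0$ acts on $L = V \oplus V^c$ as $g_0 \oplus I_{V^c}$ with unit determinant, so $[\omega_\psi(g_0)\phi'](\xi) = \phi'(g_0^{-1}\xi)$, and the change of variables $v \mapsto g_0v$ (of Jacobian $1$) yields the identity. For $g_0 \in N_V$, decompose this two-step nilpotent group into its weight-$2$ centre $Z(N_V) \cong \Hom(V^*, V)^{\mathrm{sym}}$ and its weight-$1$ quotient $\Hom(V^\perp/V, V) = \Hom(V^c, V) \oplus \Hom(V^*_\perp, V)$. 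Elements of $Z(N_V)$ embed as Siegel unipotents of $Sp_{2n}$ that fix $L$ pointwise; they commute with translations $\omega_\psi(v)$ and therefore act on $\c S(L(\A))$ as Fourier multipliers whose symbols on $L^*$ are characters of quadratic forms supported on the $V^*$-component. Since the partial $V$-integration corresponds (up to inverse Fourier transform) to evaluating the Fourier transform of $\phi'$ at $V^*$-component $=0$, where the symbol is trivial, invariance holds. Similarly, $\Hom(V^c, V)$-type elements lie in the Siegel Levi $GL(L)$ as unipotent matrices $\bpm I & A_1 \\ 0 & I \epm$ and preserve the $V$-integral by an affine change of coordinates within $L$; and $\Hom(V^*_\perp, V)$-type elements are Siegel unipotents with symbols again vanishing at $V^*$-component $=0$.

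The main technical obstacle is the bookkeeping in Schrödinger-model coordinates: identifying each generator of $N_V$ explicitly as an element of $Sp_{2n}$ in the decomposition $W = V \oplus V^c \oplus V^*_\perp \oplus V^*$, checking the symplectic conditions on the off-diagonal blocks, and computing the resulting Fourier symbol on $L^*$. A more conceptual alternative is to observe that $\phi \mapsto \int_V \phi(v+\cdot)\,dv$ is an intertwining operator from the Weil representation of $Sp_{2n}$ on $\c S(L(\A))$ to the Weil representation of $Sp(V^\perp/V)$ on $\c S(V^c(\A))$, along which the $P_V$-action factors through $GL(V) \times Sp(V^\perp/V)$ with the $GL(V)$-component contributing the character $g \mapsto |\det g|^{3/2}\gm_{\psi, \det g}$, automatically trivial on $SL(V)$.
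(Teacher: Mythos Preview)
Your approach is correct, and indeed your unfolded formula
\[
\int_{[V]}\theta(\phi; v\wt g)\,dv = \sum_{\xi_c \in V^c(F)}\int_{V(\A)}[\omega_\psi(\wt g)\phi](v+\xi_c)\,dv
\]
is the heart of the matter. But the paper's argument is considerably shorter, and the difference is instructive. Rather than fixing an arbitrary Lagrangian $L \supset V$ and then checking invariance generator-by-generator on $SL(V)\ltimes N_V$, the paper first moves $V$ to a standard position using an element of $Sp_{2n}(F)$: since $Sp_{2n}(F)$ acts transitively on isotropic subspaces of a given dimension, one may assume $V$ is spanned by the last $k$ standard basis vectors. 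In that case the $[V]$-integral against the fixed Schr\"odinger model simply kills $k$ of the summation variables $\xi$, giving $\sum_{\xi'\in F^{n-k}}[\omega_\psi(\wt g).\phi](0,\dots,0,\xi')$, and the claimed left-invariance is then immediate from the explicit formulae for $\omega_\psi$ restricted to the Siegel parabolic of the obvious embedded $Sp_{2k}$.

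What the paper's reduction buys is that all of your case analysis on $N_V$ --- the decomposition into $Z(N_V)\cong\on{Sym}^2 V$, the $\Hom(V^c,V)$ and $\Hom(V^*_\perp,V)$ pieces, the Fourier-multiplier bookkeeping --- collapses into a single standard-coordinate computation. Your route has the virtue of being coordinate-free and of making the intertwining-operator interpretation (your final paragraph) visible, but for the purposes of this lemma the paper's use of Witt transitivity is the cleaner move. One small correction: in your conceptual alternative the character of $GL(V)$ on the partial Fourier transform should have exponent $\tfrac12$ (times the appropriate power coming from the embedding), not $\tfrac32$; this does not affect the conclusion since you only need triviality on $SL(V)$.
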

\begin{proof}
First assume that $V$ is the span of the last $k$ standard 
basis vectors for some $k \le n.$  Then 
$$
\int_{(F\bs \A)^k} \sum_{\xi \in F^n}[\omega_\psi
( (0, \dots, 0, v , 0)\wt g). \phi](\xi) \, dv
= \sum_{ \xi' \in F^{n-k}}
[\omega_\psi
( \wt g). \phi](0,\dots, 0, \xi'),$$
and invariance follows easily from the explicit formulae for $\omega_\psi$  given, for example on p. 8 of \cite{GRS-Book}.  
The general case follows from this special case, since any isotropic
subspace can be mapped to the span of the last $k$ standard 
basis vectors, for the appropriate value of $k,$ by using an 
element of $Sp_{2n}(F).$ 
\end{proof}

For $f_{\chi;s} \in \Ind_{P(\A)}^{G(\A)} (\chi;s),$ 
  and $\phi \in \c S(\Mat_{{2n}\times n}(\A)),$ let
\begin{equation}\label{eq: FC of E}
  E^{\theta(\phi)}(f_{\chi;s},g)=\int_{[U_{Q}]}\ du\  E(f_{\chi;s},ug)\theta(\phi,j(ug)), \qquad (g\in {C}_{Q}(\A)).
\end{equation}
Recall that $C_Q$ was identified above with a subgroup 
of $GSp_{2n} \times GSO_{2n}.$  If $g \in C_Q$ then 
$g_1$ will denote its $GSp_{2n}$ component and 
$g_2$ will denote its $GSO_{2n}$ component.
Now take two 
characters $\omega_1, \omega_2: F^\times \bs \A^\times \to \C^\times,$
and two 
cuspforms
 $\varphi_{1},$ defined on ${GSp}_{2n}(\A)$ and $\vph_1,$ defined on 
 $GSO_{2n}(\A),$
 such that $\vph_i(a\cdot g) = \omega_i(a) \vph_i(g),$ for 
 $i =1$ or $2,$ $a \in \A^\times,$ and $g \in GSp_{2n}(\A)$ or $GSO_{2n}(\A)$
 as appropriate. 
Choose $\chi_1, \chi_2, \chi_3$ so that 
$\chi_1^{-3} \chi_2^{-3} \chi_3^2 \omega_1^{-1} \omega_2$ 
is trivial, 
and consider
\begin{equation}\label{eq: global integral definition}
  I(f_{\chi;s},\varphi_{1},\varphi_{2}, \phi)=\int_{Z(\A)C_{Q}(F)\backslash C_{Q}(\A)}E^{\theta(\phi)}(f_{\chi;s},g)\varphi_{1}(g_{1})\varphi(g_{2})dg.
\end{equation}
To simplify the notation, we may also treat the 
product $\vph_1\vph_2$ as a single cuspform defined 
on the group $C_Q,$ and write 
$\vph(g) = \vph_1(g_1)\vph_2(g_2),$ and $I(f_{\chi;s}, \vph, \phi),$ etc.
Note that the integral converges absolutely and uniformly 
as $s$ varies in a compact set, simply because $E^{\theta(\phi)}(f_{\chi;s})$ 
is of moderate growth, while $\vph_1$ and $\vph_2$ are of rapid decay.

\section{Global Integral for $GSO_{12}$}
\label{sec: global}
In this section we consider a global integral \eqref{eq: global integral definition} in the case $n=2.$
Thus $G=GSO_{12}.$
If $u_Q(X,Y,Z)$ is an element of $U_Q,$ we fix individual coordinates as follows:
\begin{equation}\label{eq:X Y coords}
X=
  \begin{pmatrix}
    x_{1}& x_{2} \\ x_{3} &  x_{4} \\ x_{5}  & x_{6}\\ x_{7} & x_{8}
  \end{pmatrix},
  \qquad
  Y=
  \begin{pmatrix}
    y_{8}& y_{7} \\ y_{6} &  y_{5} \\ y_{4}  & y_{3}\\ y_{2} & y_{1}
  \end{pmatrix}\qquad
Z= \bpm 
z_1&z_2& z_3&0\\ z_4&z_5&0& -z_3\\ z_6&0&-z_5&-z_2\\ 0&-z_6&-z_4&-z_1 
\epm.
  \end{equation}

\begin{thm}\label{thm: unfolding}
For $n$ in the maximal unipotent subgroup $N$ let 
$\psi_N(n) = \psi(n_{12}+n_{23}-n_{56}+n_{57}),$
and let 
\begin{equation}\label{globalWhittakerIntegral}
W_\vph(g) = \int_{[N]}\vph(ng) \psi_N(n)\, dn.
\end{equation}
Let $U_4$ be the codimension 
one subgroup of $N$ defined by the condition $n_{23}=n_{56}.$ 
For $\phi \in \c S(\Mat_{4\times 2}(\A)),\ g \in R_Q(\A),$ write 
\begin{equation}\label{I_0(phi, g) def}
I_0(\phi, g) = 
\int_{\A^{2}}\ud a \ud b [\omega_\psi(g)\phi]\begin{pmatrix}
    a & b \\ 1 & 0 \\ 0 & 1 \\ 0 & 0
  \end{pmatrix}\psi(-a).
\end{equation}
Finally, let $w$ be the permutation matrix attached to the 
permutation 
\begin{equation}
\label{w as a permutation}\bpm
1&2&3&4&5&6&7&8&9&10&11&12\\7&10&11&12&4&5&8&9&1&2&3&6 
\epm,
\end{equation}
and let $U^w_Q = U_Q \cap w^{-1} Pw.$
Then the global integral \eqref{eq: global integral definition}
is equal to 
\begin{equation}\label{eq: global unfolded}
\int_{Z(\A)U_4(\A)\bs C_Q(\A)} W_\vph(g) \int_{U_Q^w(\A) \bs U_Q(\A)}
f_{\chi;s}(wug)I_0(\phi, ug)\, du \, dg.
\end{equation}
\end{thm}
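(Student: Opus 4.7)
First, since $R_Q = C_Q \ltimes U_Q$, I combine the integral over $[U_Q]$ defining $E^{\theta(\phi)}(f_{\chi;s},g)$ with the outer integration in \eqref{eq: global integral definition}. Extending $\vph(g) = \vph_1(g_1)\vph_2(g_2)$ trivially along $U_Q$, the global integral rewrites as
\begin{equation*}
I(f_{\chi;s}, \vph, \phi) = \int_{Z(\A) R_Q(F) \bs R_Q(\A)} E(f_{\chi;s},h)\, \theta(\phi, j(h))\, \vph(h)\, dh.
\end{equation*}
I then substitute the series definition of $E(f_{\chi;s},h)$ (legitimate in the region of absolute convergence) and interchange sum and integral, splitting $I$ along the double cosets $P(F) \bs G(F) / R_Q(F)$:
\begin{equation*}
I = \sum_{\gamma} \int_{Z(\A) (\gamma^{-1} P(F)\gamma \cap R_Q(F)) \bs R_Q(\A)} f_{\chi;s}(\gamma h)\, \theta(\phi, j(h))\, \vph(h)\, dh.
\end{equation*}

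Next I enumerate $(W_{M_P},W_{M_Q})$-double coset representatives $\gamma$ in the Weyl group of $GSO_{12}$ and analyze each summand. The portion of the inner integration along $(\gamma^{-1} P(F)\gamma \cap U_Q(F)) \bs U_Q(\A)$ produces a Fourier coefficient of $\vph$ twisted by characters supplied by $f_{\chi;s}(\gamma \cdot)$ and the theta function. I expect every $\gamma$ other than the $w$ of \eqref{w as a permutation} to yield either (a) a Fourier coefficient along a proper unipotent subgroup of $GSp_4$ or $GSO_4$ attached to a non-generic character, which vanishes by cuspidality of $\vph_1$ or $\vph_2$, or (b) an integration over an isotropic subspace of the Heisenberg quotient $U_Q/\ker j$ to which Lemma \ref{Heis Lemma} applies, again forcing vanishing. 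Retaining only $\gamma = w$ and using $w^{-1}Pw \cap U_Q = U_Q^w$ gives
\begin{equation*}
I = \int_{Z(\A) C_Q(F) \bs C_Q(\A)} \vph(g) \int_{U_Q^w(F) \bs U_Q(\A)} f_{\chi;s}(wug)\, \theta(\phi, j(ug))\, du\, dg.
\end{equation*}

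To finish, I expand $\theta(\phi, j(ug))$ as a sum over $\Mat_{4\times 2}(F)$ and apply the explicit formulas \eqref{eq: Weil rep fmlas} for $\omega_{\psi,l}$ to carry out further Heisenberg integrations. After collapsing the resulting sum of left translates of $\vph$ against the quotient $C_Q(F)\bs C_Q(\A)$, the $\Mat_{4\times 2}(F)$-sum contracts onto the matrices of the form $\bspm a & b \\ 1 & 0 \\ 0 & 1 \\ 0 & 0 \espm$, and the residual $\A^2$-integration against $\psi(-a)$ (arising from the root of $U_P$ transported through $w$) yields the partial Fourier integral $I_0(\phi, ug)$ of \eqref{I_0(phi, g) def}. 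The remaining quotient of left translates of $\vph$ by $N(F)$ assembles into the Fourier expansion of $\vph$ along $N$ against $\psi_N$, collapsing to $W_\vph$ of \eqref{globalWhittakerIntegral}; the outer domain reduces from $Z(\A) N(\A) \bs C_Q(\A)$ to $Z(\A) U_4(\A) \bs C_Q(\A)$ because the $n_{23} = n_{56}$ direction has already been consumed in passing to $I_0$. The main obstacle is the orbit analysis of the second paragraph: enumerating the $(P, R_Q)$ double cosets in $GSO_{12}$ and verifying, representative by representative, the vanishing from cuspidality and Lemma \ref{Heis Lemma}, thereby pinning down $w$ as the unique surviving coset. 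Once this is done, the Weil and Whittaker manipulations are essentially mechanical.
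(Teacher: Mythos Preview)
Your overall architecture matches the paper's: unfold the Eisenstein series along $P(F)\backslash G(F)/R_Q(F)$, kill all cosets but the one containing $w$ using cuspidality together with Lemma~\ref{Heis Lemma}, then unfold the theta series against the cusp form to reach the Whittaker integral. Two points deserve correction.

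First, a genuine slip: after isolating the $w$-coset you write the domain as $Z(\A)C_Q(F)\backslash C_Q(\A)$, but the stabilizer $w^{-1}P(F)w\cap R_Q(F)$ meets $C_Q$ only in $C_Q^w=(P_1\times P_2)^\circ$, the product of the Klingen parabolic of $GSp_4$ and the Siegel parabolic of $GSO_4$. The correct starting point is
\[
\int_{Z(\A)C_Q^w(F)\backslash C_Q(\A)}\vph(g)\int_{U_Q^w(F)\backslash U_Q(\A)} f_{\chi;s}(wug)\,\theta(\phi,j(ug))\,du\,dg,
\]
and the passage from $C_Q^w(F)$ to the full $N$-quotient is exactly where the substantive work lies.

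Second, that passage is not ``essentially mechanical.'' The paper proceeds through a chain of orbit arguments, each of which requires identifying a transitive action and its stabilizer: integrating out $U_Q^w$ restricts the theta sum to $\xi_7=\xi_8=0$; a cuspidality argument along $U_2\subset GSO_4$ forces $\det\bigl(\begin{smallmatrix}\xi_3&\xi_4\\ \xi_5&\xi_6\end{smallmatrix}\bigr)\ne 0$; the Levi $(M_1\times M_2)^\circ$ acts transitively on this set with stabilizer $M_5\cong GL_2\times GL_1$; a Fourier expansion along $U_1$ (via Lemma~\ref{lem:1}) introduces characters $\psi_{U_1}^{\alpha,\beta}$ on which $M_5$ again acts transitively, cutting down to $M_6$; one more Fourier expansion along the remaining simple-root direction (again Lemma~\ref{lem:1} plus cuspidality) produces the family $\psi_N^\gamma$, and $T_6$ permutes these transitively with stabilizer $Z$. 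Your explanation of where the $\psi(-a)$ in $I_0$ comes from (``root of $U_P$ transported through $w$'') is not the mechanism: it arises from the $\overline{\psi}_{U_1}$ in the step where the sum over $(\xi_1,\xi_2)\in F^2$ merges with the $U_1$-integral. None of these steps is deep, but each requires the specific structural facts above, and your sketch does not supply them.
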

\begin{rem}The permutation matrix $w$ represents an element 
of the Weyl group of $G$ relative to $T_G.$ 
We also record an expression for $w$ as reduced product 
of simple reflections. 
 We also introduce some notation 
for elements of the Weyl group.  
We write $w[i]$ for the simple reflection attached to the 
simple root $\alpha_i,$ and 
$w[i_1i_2\dots i_k]$ for the product $w[i_1]w[i_2]\dots w[i_k].$ 
Then $w=w[64321465432465434654].$
\end{rem}

Before proceeding to the 
proof, we need to know the structure of the set $P\backslash G \slash R_Q.$
\subsection{Description of the double coset space $P\backslash G \slash R_Q$}
\label{section:  description}
Clearly, the identity map $G \to G$ induces a map $\pr: P \bs G / R_Q
\to 
P \bs G/ Q.$  
Each element of $P \bs G/ Q$ contains a unique element of 
the Weyl group which is of minimal length.  
Recall that the group of permutation matrices which are 
contained in $G$ maps isomorphically onto 
the Weyl group of $G.$
A Weyl element of minimal length in its $P,Q$ double coset corresponds
to a permutation $\sg: \{ 1, \dots, 12\} \to \{ 1, \dots, 12\}$
such that 
\begin{itemize}
\item $\sg(13-i) = 13-\sg(i) , \forall i, $
\item $\sg$ is an even permutation.
\item If $ 1\le i<j \le 4, \; 5\le i<j \le 8,$ or $9 \le i < j \le 12,$
and if $\{ i,j\} \ne \{ 6,7\},$
then $\sg(i) < \sg(j).$
\item If $1 \le i < j \le 3,\;4 \le i < j \le 6,\;7 \le i < j \le 9,$
or $10 \le i < j \le 12,$ then $\sg^{-1}(i) < \sg^{-1}(j).$
\end{itemize}
Such a permutation $\sg$ is determined by the quadruple 
$$
( \#(\{ 1,2,3,4\}\cap \sg^{-1}(\{ 3i-2, 3i-1, 3i\})))_{i=1}^4.
$$
Deleting any zeros in this tuple gives the ordered partition of $4$ corresponding to the 
standard parabolic subgroup $P_\sg:=GL_4 \cap \sg^{-1}P \sg.$ 
(Here we identify the permutation $\sg$ with the 
corresponding permutation matrix, which is in $GSO_{12},$ and 
identify $g \in GL_4$ with $\diag(g, I_4, g^*) \in GSO_{12}.$)
Now, for any parabolic subgroup $P_o$ of $GSO_4,$ we have
$GSO_4 = P_oSO_4.$  It follows that 
$g \mapsto \sg \diag(g, I_4, g^*)$ induces a bijection 
$P_\sg \bs GL_4 / GSp_4 \leftrightarrow 
\pr^{-1}(P \cdot  \sg \cdot Q) \subset P \bs G/ R_Q.$
Therefore we must study the space $P' \bs GL_4 / GSp_4,$ 
where $P'$ is an arbitrary parabolic subgroup of $GL_4.$

\begin{lem}
Let $S$ be a subset of the set of simple roots 
in the root system of type $A_3.$ 
Let $P_S, P'_S$ denote the standard parabolic subgroups
 $GL_4,$ and $SO_6,$
respectively, corresponding to $S.$  Then 
$P_S \bs GL_4/GSp_4$ and $P'_S \bs SO_6 / SO_5$ are 
in canonical bijection.  
\end{lem}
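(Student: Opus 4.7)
The plan is to exploit the exceptional isogeny coming from the diagram coincidence $A_3 = D_3$, realized concretely via the second exterior power. Setting $V := \wedge^2 F^4$, the pairing $(\omega_1,\omega_2)\mapsto \omega_1\wedge\omega_2\in\wedge^4 F^4\cong F$ is a non-degenerate symmetric bilinear form on which $g\in GL_4$ acts by the similitude factor $\det g$. This yields a surjective isogeny $\rho:GL_4\twoheadrightarrow GSO_6$ with central kernel $\{\pm I\}$, and I would base the proof on matching two pieces of structure under $\rho$.

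First, I would verify that $\rho$ carries the standard parabolic data over correctly: with a suitable ordering of the basis $e_i\wedge e_j$ of $V$, $\rho$ sends the Borel of upper triangular matrices in $GL_4$ to the standard Borel of $GSO_6$, and for each $S\subset\{\alpha_1,\alpha_2,\alpha_3\}$ the image $\rho(P_S)$ coincides, up to the central similitude torus, with the standard parabolic of $SO_6$ indexed by the same subset under the identification $A_3=D_3$. This is essentially the observation that the outer isomorphism between the Weyl groups $S_4$ and $W(D_3)$ matches the simple reflections as predicted by the Dynkin labeling.

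Second, I would observe that the standard symplectic form $v_0 \in V$ on $F^4$ satisfies $v_0\wedge v_0\ne 0$ and is therefore a non-isotropic vector of the quadratic space $V$. By definition, $GSp_4\subset GL_4$ is the stabilizer of the line $\langle v_0\rangle$, so $\rho(GSp_4)$ is the stabilizer of $\langle v_0\rangle$ inside $GSO_6$; intersecting with $SO_6$ recovers the pointwise stabilizer of $v_0$, namely $SO_5$.

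Combining the two, $P_S\bs GL_4/GSp_4$ is identified via $\rho$ with $\rho(P_S)\bs GSO_6/\rho(GSp_4)$. A short calculation, using that $SO_6\cdot Z(GSO_6)=GSO_6$ and that the stabilizer of $\langle v_0 \rangle$ in $GSO_6$ meets $SO_6$ in a group containing $SO_5$ of index at most $2$, then promotes this to the desired bijection with $P'_S\bs SO_6/SO_5$. The main obstacle I anticipate is precisely this last bookkeeping: one must check, uniformly in $S$, that the restriction $SO_6\hookrightarrow GSO_6$ neither collapses nor creates any $(P'_S,SO_5)$-double cosets. This is a routine but delicate finite-index verification involving the similitude character and the action of $\{\pm I\}$.
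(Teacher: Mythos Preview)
Your approach via the exterior square $\rho:GL_4\to GSO_6$ is different from the paper's, which instead uses $GSpin_6$ as a common cover of both $GL_4$ and $SO_6$: since the kernels of $GSpin_6\to SO_6$ and $GSpin_6\to GL_4$ are central and hence lie in every parabolic and in $GSpin_5=GSp_4$, both double coset spaces are identified with $P_S''\backslash GSpin_6/GSpin_5$ in one stroke.

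Your route can be made to work, but there is a concrete error in the sketch. The assertion $SO_6\cdot Z(GSO_6)=GSO_6$ is false over a field that is not quadratically closed: the similitude factor of a scalar matrix $cI_6$ is $c^2$, so $SO_6\cdot Z(GSO_6)$ consists exactly of elements whose similitude is a square. Relatedly, $\rho$ is not surjective on $F$-points (its cokernel is governed by the spinor norm), so the identification $P_S\backslash GL_4/GSp_4 \cong \rho(P_S)\backslash GSO_6/\rho(GSp_4)$ is not immediate either. Both issues are fixable, but not by the ``short calculation'' you describe: one should replace the center by the full parabolic (which does contain elements of every similitude, and whose torus realizes every spinor norm in the split form), and separately check that passing from the line stabilizer $\rho(GSp_4)\cap SO_6$ (which is $SO_5\cdot\{\pm I_6\}$, not $SO_5$) to $SO_5$ does not change the double cosets because $-I_6\in P'_S$.

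The virtue of the paper's $GSpin_6$ argument is precisely that it avoids all three of these finite-index verifications: the kernel of $GSpin_6\to SO_6$ is a torus, so surjectivity on $F$-points is Hilbert~90, and the preimage of $SO_5$ is exactly $GSp_4$ with no $O_5$-versus-$SO_5$ discrepancy.
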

\begin{proof}
This follows from considering the coverings 
of $SO_6$ and $GL_4$ by the group $GSpin_6$ which 
are described in \cite{Hundley-Sayag} and 
section 2.3 of \cite{Asgari-Raghuram}, respectively.
The preimage of $SO_5$ in $GSpin_6$ is $GSpin_5 = GSp_4.$ 
Since the kernels of both projections are contained in the 
central torus of $GSpin_6,$ which is contained in any parabolic subgroup
of $GSpin_6$ it follows that both $P_S \bs GL_4/GSp_4$ and $P'_S \bs SO_6 / SO_5$ are 
in canonical bijection
with $P_S'' \bs GSpin_6 / GSpin_5,$
where $P_S''$ is the parabolic subgroup of $GSpin_6$ determined
by $S.$
\end{proof}

Now, in considering $SO_6/SO_5,$ 
we embed $SO_5$ into $SO_6$ as the stabilizer of  
a fixed anisotropic element 
$v_0$ 
of the standard representation of $SO_6.$
Then 
$P_S'' \bs SO_6/ SO_5$ may be 
identified with 
 the set of $P_S'$-orbits 
in $SO_6\cdot v_0.$ 
For concreteness, take $SO_6$ to be defined using the 
quadratic form associated to the matrix $J_6,$ and take 
$v_0 = \, ^t[0,0,1,1,0,0].$  The $SO_6$ orbit of $v_0$ 
is the set of vectors satistfying $^tv\cdot J_6 \cdot v = {}\,^t\!
v_0 \cdot J_6 \cdot v = 2.$
Note that each of the permutation matrices representing
a simple reflection attached to an outer 
node in the Dynkin diagram maps $v_0$ to $v_1:=\,^t[0,1,0,0,1,0],$
and that a permutation matrix representing the simple reflection attached to the middle node 
of the Dynkin diagram maps $v_1$ to $v_2:=\,^t[1,0,0,0,0,1].$ 

\begin{lem}\label{lem: P SO6 SO5 reps}
Number the roots of $SO_6$ so that $\alpha_2$ is the 
middle root.  (This is not the standard numbering 
for $SO_6,$ but it matches the standard numbering 
for $GL_4,$ and the numbering inherited as a subgroup 
of $GSO_{12}$.)
Write $V$ for the standard representation of $SO_6.$ 
The decomposition of $SO_6 \cdot v_0$ into $P_S'$ orbits is as follows: 
$$
\begin{array}{|c|c|c|}\hline
S &
\text{orbit reps in }V &
\text{double coset reps} 
\\\hline
\emptyset & v_0, v_1, v_2& e, w[1], w[2]w[1]\\ 
\{1\}, \{ 3\}, \text{ or } \{1, 3\} & v_0, v_2
& e, w[1]
\\
\{2\}& v_0, v_1&e, w[2]w[1] \\
\{1,2\} \{ 2,3\} \text{ or } \{1,2,3\} & v_0 & e\\\hline
\end{array}
$$
\end{lem}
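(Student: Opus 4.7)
The plan is to first establish the case $S = \emptyset$ directly, and then deduce each nonempty-$S$ row by using the fact (recorded in the paragraph preceding the statement) that the permutation matrix representative of any outer-node simple reflection sends $v_0$ to $v_1$, and the representative of the middle-node reflection sends $v_1$ to $v_2$. Since $P_S'$ is generated by $B$ together with the simple reflections $\{w[i] : i \in S\}$, the $P_S'$-orbits on $SO_6 \cdot v_0$ are obtained from the $B$-orbits by merging any two $B$-orbits that are swapped by some $w[i]$ with $i \in S$.

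For $S = \emptyset$ I would stratify the quadric into three locally closed $B$-stable pieces
\[
X_0 = \{v_5 = v_6 = 0\} \cap SO_6 \cdot v_0, \quad X_1 = \{v_5 \ne 0,\ v_6 = 0\} \cap SO_6 \cdot v_0, \quad X_2 = \{v_6 \ne 0\} \cap SO_6 \cdot v_0,
\]
each containing one of the three proposed representatives. Stability is immediate: $v_6$ is invariant under the upper unipotent radical $U$ of $B$, $v_5$ is $U$-invariant on $\{v_6 = 0\}$, and $T$ merely rescales coordinates. I would then show each $X_i$ is a single $B$-orbit by an explicit Bruhat-style normal form: using the torus one normalizes the ``deepest nonzero coordinate'' (respectively $v_3$, $v_5$, or $v_6$) to $1$, after which a finite sequence of upper unipotent elements built out of the simple-root subgroups of $SO_6$ (subject to the defining relation $N_{i,j} = -N_{7-j,\,7-i}$ on the Lie algebra) successively kills the remaining nonzero coordinates. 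Since the three strata are pairwise disjoint, this establishes the first row of the table.

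For a nonempty $S$ I would combine the $S = \emptyset$ decomposition with the reflection relations. Fixing convenient permutation-matrix representatives of the $w[i]$, one checks directly that the middle reflection preserves the condition $v_5 = v_6 = 0$, hence also preserves $B v_0$, while each outer-node reflection preserves $v_6 \ne 0$, hence also preserves $B v_2$. Together with $w[\text{outer}] v_0 = v_1$ and $w[2] v_1 = v_2$, this forces the merging pattern: any outer root in $S$ merges $B v_0$ and $B v_1$ but fixes $B v_2$, while the middle root in $S$ merges $B v_1$ and $B v_2$ but fixes $B v_0$. Reading off the resulting $P_S'$-orbits in each remaining row produces exactly the representatives listed, with the double-coset representatives chosen so that $w[1]$ takes the identity double coset to $v_1$ and $w[2]w[1]$ takes it to $v_2$. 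The main obstacle is the single-$B$-orbit claim for the top stratum $X_2$, which has the highest dimension and requires the fullest normal-form computation; once that is in hand, the remaining merging analysis is entirely elementary.
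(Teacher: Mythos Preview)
Your plan is correct and is exactly the ``direct calculation'' the paper has in mind (that phrase is the entirety of the paper's proof); the stratification by the last nonzero coordinate among $v_5,v_6$, the normal-form argument showing each stratum is a single $B$-orbit, and the merging via the simple reflections all go through as you describe.

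One caution: carrying out your merging analysis yields double-coset representatives $e,\,w[2]w[1]$ for $S\in\{\{1\},\{3\},\{1,3\}\}$ and $e,\,w[1]$ for $S=\{2\}$, whereas the printed table has those two entries interchanged. Your analysis is the correct one---for instance $w[1]\in P'_{\{1\}}$, so when $1\in S$ it represents the \emph{same} double coset as $e$ and cannot serve as a second representative. The ``orbit reps in $V$'' column, which is what your argument directly establishes, is correct as printed, and the apparent swap in the last column is harmless for the paper's subsequent use of the lemma (in the proof of Proposition~\ref{prop:  only one double coset contributes} all nontrivial $\nu$ are eliminated uniformly).
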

\begin{proof}
Direct calculation.
\end{proof}
\begin{rem}
As elements of $GSO_{12},$ 
the double coset representatives are identified with 
permutations of $\{ 1,\dots, 12\}.$  Writing these permutations
in cycle notation, we have
$w[1] =(1,2)(11,12),\; w[2]w[1]
= (1,3,2)(10,11,12).$
Replacing $w[1]$ by $w[3]$ in any  of the representatives 
above produces a different element of the same double coset. 
\end{rem}

\subsection{Proof of theorem \ref{thm: unfolding}}
\label{sec: proof of unfolding}
We now apply this description of $P \bs G / R_Q,$
to the study of $I(f_{\chi;s}, \vph, \phi).$  
For this section only, let $w_0$ be the permutation matrix 
attached to \eqref{w as a permutation}, and let $w$ be an 
arbitrary representative for $P(F)\backslash G(F)\slash R_Q(F).$ 

The global  integral \eqref{eq: global integral definition} 
is equal to
$$
  \sum_{w\in P(F)\backslash G(F)\slash R_Q(F)}
  I_w(f_{\chi;s}, \vph, \phi),$$where
    $$I_w(f_{\chi;s}, \vph, \phi)=
  \int_{Z(\A)U_{Q}^{w}(F)C_{Q}^{w}(F)\backslash C_{Q}(\A)U_{Q}(\A)} f_{\chi;s}(wug)\theta (\phi,j(ug))\varphi(g)dg,
$$
where $C_{Q}^{w}=C_{Q}\cap w^{-1}Pw$, and 
 $U_{Q}^{w}=U_Q \cap w^{-1} P w$.

\begin{pro}\label{prop:  only one double coset contributes}
If $w$ does not lie in the double coset containing
$w_0,$ 
then $I_w( f_{\chi;s}, \vph_1, \vph_2)=0.$
Consequently, $I(f_{\chi;s}, \vph_1, \vph_2) = I_{w_0}(f_{\chi;s}, \vph_1, \vph_2).$
\end{pro}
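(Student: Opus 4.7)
The plan is to use the enumeration of $P(F)\backslash G(F)/R_Q(F)$ from Section~\ref{section:  description} to reduce to a finite check, and to dispose of each double coset representative $w$ not in the class of $w_0$ by exhibiting a unipotent subgroup along which the inner integration forces $I_w$ to vanish. Once the list of representatives is in hand I would work through them one at a time.

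For each such $w$, the strategy is to find a one-parameter unipotent subgroup $V \subset R_Q$ satisfying three conditions: (a) $wVw^{-1} \subseteq U_P$, so that $f_{\chi;s}(wv\,\cdot)=f_{\chi;s}(w\,\cdot)$ for $v \in V$; (b) the action of $V$ on $\theta(\phi,j(u\,\cdot))$ is either trivial or given by an explicit additive character $\psi_V$, computed via the Weil representation formulas \eqref{eq: Weil rep fmlas}; and (c) after factoring out the integration over $[V]$, one obtains either a Fourier coefficient of $\vph$ along a unipotent direction, which vanishes by cuspidality of $\vph_1$ or $\vph_2$, or a Fourier coefficient of $\theta$ against a non-trivial character, which vanishes by Lemma~\ref{Heis Lemma}. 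In most cases a single root subgroup will suffice; in the remaining ones a preliminary root-exchange --- a Fourier expansion in one unipotent coordinate followed by a change of variable trading the dual coordinate for a subgroup of $C_Q$ --- will be needed to bring $V$ into position.

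The difficulty varies with the projection $\pr: P \backslash G / R_Q \to P \backslash G / Q$ from Section~\ref{section:  description}. Representatives $w$ with $\pr(w) \neq \pr(w_0)$ typically have a root subgroup of the Levi of $C_Q$ already sent into $U_P$ by conjugation by $w$, so cuspidality of $\vph_1$ or $\vph_2$ along a simple root direction produces vanishing almost immediately. The harder cases are those with $\pr(w) = \pr(w_0)$ but $w$ differing from $w_0$ by one of the alternative $P_\sigma \backslash GL_4 / GSp_4$ representatives listed in Lemma~\ref{lem: P SO6 SO5 reps}; here $V$ must be sought inside the part of $U_Q$ on which $j$ is non-trivial, so the explicit form of $\omega_{\psi,l}$ given in \eqref{eq: Weil rep fmlas} is needed to compute its action on $\theta$ and extract a non-trivial $\psi_V$.

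The main obstacle is the combinatorial bookkeeping: for each $w$ I must simultaneously track which positive roots of $T_{GSO_{12}}$ are conjugated into $U_P$, which remain in $U_Q$, and which land in $C_Q$, and then verify that the chosen $V$ is compatible with both the Weil representation data (so that its action on $\theta$ is controllable) and the cusp form (so that cuspidality is genuinely applicable, rather than yielding the surviving Whittaker-type direction which characterizes $w_0$ itself). Organizing these data in a single table indexed by the double coset representatives should make each individual case reduce to a brief verification.
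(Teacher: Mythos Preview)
Your approach is workable but is organized quite differently from the paper's, and is more laborious as a result. The paper does not search case by case for a single one-parameter $V$; instead it applies two uniform criteria. First, most representatives are killed by the observation that if $\psi_l$ is nontrivial on $U_Q^w\cap Z(U_Q)$ then the inner integral over $[U_Q^w]$ already contains $\int_{[V]}\psi_l(v)\,dv=0$ for some central one-parameter $V$---this is more elementary than Lemma~\ref{Heis Lemma} and, once one writes $w=\sigma\nu$ as in Section~\ref{section:  description}, it reduces to the uniform combinatorial conditions $\sigma(1)\ge\sigma(9)$ and $\sigma(2)\ge\sigma(10)$, leaving only seven $\sigma$'s (all with $\nu=e$). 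Second, for those seven the paper applies Lemma~\ref{Heis Lemma} structurally: the image $\ol{U_Q^w}$ in $\c H_{17}/Z(\c H_{17})$ is an isotropic subspace $V$, the integrated theta is left-invariant by $P_V^1\cap C_Q$, and one checks once whether this group contains the unipotent radical of a proper parabolic of $C_Q$; cuspidality of $\vph$ then gives vanishing for all but $w_0$. Your division of cases according to $\pr(w)$ does not match this, your description of Lemma~\ref{Heis Lemma} as making ``a Fourier coefficient of $\theta$ against a non-trivial character'' vanish misstates what that lemma does (it is an invariance statement, used in tandem with cuspidality), and the root-exchange maneuvers you anticipate are not needed here---they enter only later in the unfolding of the surviving term.
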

\begin{proof}
Write $w = \sg \nu$ where $w$ is a permutation 
of $\{ 1, \dots 12\}$ satisfying the four conditions listed 
at the beginning of section \ref{section:  description}, 
and $\nu$ is one of the representatives for 
$P_\sg\bs M_Q/ C_Q$ given in 
the table in lemma \ref{lem: P SO6 SO5 reps}.
The integral $I_w( f_{\chi;s}, \vph, \phi )$ vanishes
if $\psi_l$ is nontrivial on $U_Q^w:=U_Q \cap w^{-1} P w,$  
or equivalently, if the character $\nu \cdot \psi_l$ obtained by 
composing $\psi_l$ with conjugation by $\nu$
is nontrivial on $U_Q \cap \sg^{-1} P \sg.$ 
For our representatives $\nu,$ we have
$$
\nu \cdot \psi_l(u_Q(0,0,Z))
= \begin{cases}
\psi( Z_{1,9}+Z_{2,10}) , & \nu = e, \\
\psi( Z_{1,10}+Z_{2,9}), & \nu = w[1],\\
\psi(Z_{1,11}+Z_{3,9}), & \nu = w[2]w[1].
\end{cases}
$$

There are 25 possibilities for $\sg.$  
However, it's clear that $I_w( f_{\chi;s}, \vph, \phi )$ vanishes,
regardless of $\nu,$ if $\sg(1) < \sg(9),$ or if $\sg(2) < \sg(10).$
This eliminates all but seven possibilities for $\sg.$ 
For the remaining seven, the above criterion shows that 
$I_w( f_{\chi;s}, \vph, \phi )$ vanishes unless $\nu$ is trivial.

Assume now that $\psi_l$ is trivial on $U_Q^w.$
This means that the image of $U_Q^w$
in the Heisenberg group intersects the center trivially, 
and maps to an isotropic subspace of the quotient 
$\c H_{17}/Z(\c H_{17})$ (which has the 
structure of a symplectic vector group). 
Write $V$ for this subspace and 
$V^\perp$ for its perp space.  
Define  $P_V\subset Sp_{16}$ 
as in lemma \ref{Heis Lemma}, and let $P_V^1$ denote
the kernel of the canonical projection $P_V \to GL_1 \times Sp( V^\perp/V).$ 
It follows immediately from lemma \ref{Heis Lemma} and the cuspidality 
of $\vph$ that 
$I_w( f_{\chi;s}, \vph, \phi )$ vanishes
whenever
 $P_V^1 \cap C_Q$ contains the unipotent 
radical of a proper parabolic subgroup of $C_Q.$
This applies to each of the remaining double coset 
representatives, except for $w_0.$
 \end{proof}

The following lemma is useful in our calculation.
\begin{lem}
  \label{lem:1}
  Let $f_{1}$, $f_{2}$ be two continuous functions on $(F\bs \A)^{n}$, and $\psi$ a nontrivial additive character on $F\bs \A$. Then
  \begin{equation}
    \label{eq:3}
    \int_{(F\bs \A)^{n}}\ud x f_{1}(x)f_{2}(x)=\sum_{\alpha\in F^{n}}\int_{(F \bs \A)^{n}}\ud x f_{1}(x)\psi(\alpha\cdot x)\int_{(F \bs \A)^{n}}\ud y f_{2}(y)\psi^{-1}(\alpha \cdot y).
  \end{equation}
Moreover, if $\int_{(F\bs \A)^{n}}\ud x f_{1}(x)=0$, then one can replace $\sum_{\alpha\in F^{n}}$ by $\sum_{\alpha\in F^{n}-\{0\}}$ in the formula above. 
\end{lem}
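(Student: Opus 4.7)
The plan is to recognize this statement as nothing more than Parseval's identity for Fourier series on the compact abelian group $(F\bs\A)^n$. The relevant input is the classical fact that the Pontryagin dual of $F\bs\A$ is canonically identified with $F$ via the pairing $(\alpha,x) \mapsto \psi(\alpha x)$ (self-duality of the adele class group); taking an $n$-fold product, the dual of $(F\bs\A)^n$ is $F^n$ with the pairing $(\alpha,x)\mapsto\psi(\alpha\cdot x)$. I would cite this rather than reprove it.

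First I would expand $f_2$, viewed as an element of $L^2((F\bs\A)^n)$, in its Fourier series
\begin{equation*}
f_2(x) \;=\; \sum_{\alpha\in F^n} \hat f_2(\alpha)\,\psi(\alpha\cdot x),
\qquad
\hat f_2(\alpha) \;:=\; \int_{(F\bs\A)^n} f_2(y)\,\psi^{-1}(\alpha\cdot y)\,dy,
\end{equation*}
and similarly expand $f_1$. Substituting into $\int f_1(x)f_2(x)\,dx$ and using orthonormality of the characters $\psi(\alpha\cdot x)$ (equivalently, Parseval applied to $f_1$ and $\overline{f_2}$, noting $\overline\psi=\psi^{-1}$) immediately produces the right-hand side
\begin{equation*}
\sum_{\alpha\in F^n}\bigg(\int f_1(x)\psi(\alpha\cdot x)\,dx\bigg)\bigg(\int f_2(y)\psi^{-1}(\alpha\cdot y)\,dy\bigg).
\end{equation*}
The moreover clause is transparent: the $\alpha=0$ summand is exactly $\bigl(\int f_1\bigr)\bigl(\int f_2\bigr)$, so if $\int f_1 = 0$ the $\alpha = 0$ contribution vanishes and one may restrict the sum to $\alpha\in F^n\ssm\{0\}$.

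The only mild technical point is the interchange of sum and integral, since continuous functions on a compact group need not have pointwise-convergent Fourier series. In the actual application of this lemma the functions arising are smooth and rapidly decaying in the relevant dual variable (they are built from automorphic forms and Schwartz data), so absolute convergence is not a serious issue; if one wishes to give a proof in the stated generality, one can either approximate $f_1, f_2$ in $L^2$ by trigonometric polynomials and pass to the limit, or invoke Parseval's theorem directly in the $L^2$ setting and observe that both sides of \eqref{eq:3} depend continuously on $(f_1,f_2)$ in $L^2\times L^2$. I would not expect any real obstacle here; the lemma is a formal consequence of harmonic analysis on $(F\bs\A)^n$ and the proof should be no more than a few lines.
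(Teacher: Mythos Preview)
Your proposal is correct and follows essentially the same approach as the paper: expand $f_1$ and $f_2$ in their Fourier series on $(F\bs\A)^n$, substitute into the integral, and use orthogonality of the characters $\psi(\alpha\cdot x)$ to collapse the double sum. The paper's proof is slightly more bare-bones (it does not comment on the convergence issue you raise), but the argument is the same.
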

\begin{proof}
By Fourier theory on $F\bs \A$,
\begin{equation*}
  f_{i}(x)=\sum_{\alpha\in F^{n}} \psi(-\alpha\cdot x)\hat{f}_{i}(\alpha),
\end{equation*}
where $\hat{f}_{i}(\alpha)=\int_{(F\bs \A)^{n}}\ud x f_{1}(x)\psi(\alpha x)$ for $i=1,2$. So the left hand side of \eqref{eq:3} is equal to
\begin{equation}
  \label{eq:5}
  \sum_{\alpha,\beta\in F^{n}}\hat{f}_{1}(\alpha)\hat{f}_{2}(\beta)\int_{(F\bs \A)^{n}}\ud x \psi(-(\alpha+\beta)\cdot x).
\end{equation}
The integral on $x$ vanishes when $\alpha+\beta\neq 0$, and equals 1 if $\alpha+\beta=0$, so \eqref{eq:5} equals
\begin{equation*}
  \sum_{\alpha\in F^{n}}\hat{f}_{1}(\alpha)\hat{f}_{2}(-\alpha),
\end{equation*}
which is the right hand side of \eqref{eq:3}. When $\int_{(F\bs \A)^{n}}\ud x f_{1}(x)=0$, we have $\hat{f}_{1}(0)=0$, so we can replace $\sum_{\alpha\in F^{n}}$ by $\sum_{\alpha\in F^{n}-0}.$
\end{proof}
From now on, let $w=w[64321465432465434654].$
Then
\begin{align}
  \label{eq:10}
  & U_{Q}^{w}:=U_Q \cap w^{-1} P w= \left\{u_{Q}^{w}(y_{7},y_{8})= u_Q\left( 0,\; \bpm y_8&y_7 \\ 0&0\\0&0\\0&0\epm,\; 0 \right)  : y_{7},y_{8}\in F\right\}, \\
  & C_{Q}^{w}:=C_Q \cap w^{-1} P w= (P_{1}\times P_{2})^{\circ} \notag
\end{align}
where $P_{1}$ is the Klingen parabolic subgroup of $GSp_{4}$  and $P_{2}$ is the Siegel parabolic subgroup of $GSO_{4}$. Let $P_{1}=M_{1}\ltimes U_{1}$ and $P_{2}=M_{2}\ltimes U_{2}$ be their Levi decompositions. 
Note that $f_{\chi;s}(wug)=f_{\chi;s}(wg)$ for all $u\in U_{Q}^{w}.$  So,
by proposition  \ref{prop:  only one double coset contributes},
$I(f_{\chi;s}, \vph, \phi)$ is equal to 
\begin{equation}\label{eq a}
\int_{Z(\A)C_Q^w(F) \bs C_Q(\A)}
\vph(g) \int_{U_Q^w(\A)\bs U(\A)} f_{\chi;s}(wu_2g)
\int_{[U_Q^w]}
\theta(\phi, j(u_1u_2g))\, du_1\, du_2\, dg. 
\end{equation}
But, for $u=u_{Q}^{w}(y_{7},y_{8})$ (defined in \eqref{eq:10}),
\begin{align}
  [\omega_\psi(j(u))\phi_1](\xi)=\phi_1(\xi)\psi(\xi_{7}y_{7}+\xi_{8}y_{8}),
 \qquad \xi = \bpm \xi_1 & \xi_2 \\ \xi_3& \xi_4 \\ \xi_5 & \xi_6 \\
 \xi_7 & \xi_8 \epm,
\end{align}
for any $\phi_1 \in \c S(\Mat_{4 \times 2}(\A)).$
It follows that \eqref{eq a} is equal to 
\begin{equation}\label{eq b}
\int_{Z(\A)C_Q^w(F) \bs C_Q(\A)}
\vph(g) \int_{U_Q^w(\A)\bs U(\A)} f_{\chi;s}(wug)
\theta_0(\phi, j(ug))\, du\, dg, 
\end{equation}
where
$$
\theta_0(\phi, u \wt g) 
:= \sum_{\xi\in \Mat_{3\times 2}(F)}
[\omega_\psi( u \wt g).\phi]\bpm \xi\\ 0 \epm,
\qquad (u \in \c H_{17}(\A),\  \wt g \in \Sp_{16}(\A))
$$

Now, $C_{Q}^{w}=(M_{1}\times M_{2})^{\circ}\ltimes (U_{1}\times U_{2})$, and $f_{\chi;s}(wu_{1}u_{2}g)=f_{\chi;s}(wg)$, for any $u_{1}\in U_{1}$, $u_{2}\in U_{2}$, and $g\in G.$
Moreover, if
\begin{equation}
  \label{eq:4}
  U_{2}(a)=
  \begin{pmatrix}
    1 & &a & \\ & 1 & &-a\\ &&1& \\ &&& 1
  \end{pmatrix},
\end{equation}
then $[\omega_\psi(U_{2}(a)u_{1}g)\phi]\begin{pmatrix}
    \xi_{1}& \xi_{2}\\ \xi_{3} & \xi_{4} \\ \xi_{5} & \xi_{6}\\ 0 & 0
  \end{pmatrix}=\psi(a(\xi_{3}\xi_{6}-\xi_{4}\xi_{5}))[\omega_\psi(u_{1}g)\phi]\begin{pmatrix}
    \xi_{1}& \xi_{2}\\ \xi_{3} & \xi_{4} \\ \xi_{5} & \xi_{6}\\ 0 & 0
  \end{pmatrix}$. 
It then follows from the cuspidality of $\vph$ that 
 \eqref{eq b} is equal to 
\begin{equation}\label{eq c}
\int_{Z(\A)C^w_Q(F) \bs C_Q(\A)}
\vph(g) \int_{U_Q^w(\A)\bs U(\A)} f_{\chi;s}(wug)
\theta_1(\phi, j(ug))\, du\, dg, 
\end{equation}
where
$$
\theta_1(\phi, u \wt g) 
:= \sum_{\xi\in \Mat_{3\times 2}(F):\ (\xi_{3}\xi_{6}-\xi_{4}\xi_{5})\ne 0}
[\omega_\psi( u \wt g).\phi]\bpm \xi\\ 0 \epm,
\qquad (u \in \c H_{17}(\A),\  \wt g \in \Sp_{16}(\A)).
$$
 The group $(M_{1}\times M_{2})^{\circ}$ is
 the set of all 
 \begin{equation}\label{m: GL2 x GL2 x GL1 -> M1 x M2 circ}
 m(g_3, g_4, t):=
 diag(t\det g_{3},g_{3},t^{-1};\det g_{3} g_{4},g_{4}^{*};t \det g_{3},\det g_{3}\cdot g_{3}^{*},t^{-1})
 \end{equation}
  where $g_{3}\in GL_{2}$, $g_{4}\in GL_{2}$ and $t\in GL_{1}$. Note that the summation over $(\xi_{1},\xi_{2})$ is invariant under the action of $(M_{1}\times M_{2})^{\circ}$. Consider the action of $(M_{1}\times M_{2})^{\circ}$ on $\lbrace(\xi_{3},\xi_{4},\xi_{5},\xi_{6})\mid \det
\begin{pmatrix}
  \xi_{3} & \xi_{4} \\ \xi_{5} & \xi_{6}
\end{pmatrix}
\neq 0\rbrace$. It is not hard to see that it is transitive, and the stabilizer of $(1,0,0,1)$ is $\lbrace m(t,g_{3},g_{4})\mid g_{4}= g_{3}\cdot \det(g_{3})^{-1}\rbrace$, which is the same as  $\lbrace M_{5}(t,g_{3})=diag(t \det g_{3},g_{3},t^{-1}; g_{3},g_{3}^{*}\det g_{3}; t \det g_{3}, g_{3}^{*}\det g_{3}, t^{-1}): g_3 \in GL_2, \ t \in GL_1\rbrace$. We denote this group by $M_{5}.$  Let $\psi_{U_{2}}$ be a character on $U_{2}$ defined by $\psi_{U_{2}}(U_{2}(a))=\psi(a)$, then  equation \eqref{eq c} is
 equal to 
\begin{equation}
\label{eq d}
\int_{Z(\A)M_5(F)U_1(F)U_2(\A)\bs C_Q(\A)}
\vph^{(U_2,\psi_{U_2})}(g) 
\int_{U_Q^w(\A) \bs U(\A)}
f_{\chi;s}(wug) 
\theta_2(j(ug))
\, du
\, dg,
\end{equation}
where
$$
\theta_2(j(ug)):=
\sum_{(\xi_1, \xi_2) \in F^2}
[\omega_\psi(j(ug)).\phi]\begin{pmatrix}
    \xi_{1}& \xi_{2}\\ 1 & 0 \\ 0 & 1\\ 0 & 0
  \end{pmatrix},
$$
and the notation $\varphi^{(U_2, \psi_{U_2})}$ is defined as follows. 
For any unipotent subgroup $V$ of 
an  $F$-group $H,$ 
character $\vartheta$
of $V,$ 
and smooth left $V(F)$-invariant function $\Phi$ on 
$H(\A),$ we define
$$
\Phi^{(V,\vartheta)}(h) : = \int_{[V]}\Phi(vh)  \vartheta(v) \, dv.
$$
Now, $U_{1}$ consists of elements
\begin{equation}
  \label{eq:6}
  U_{1}(a,b,c)=
  \begin{pmatrix}
    1 & a & b & c \\ & 1 & & b \\ & & 1 & -a \\ & & & 1
  \end{pmatrix}\in GSp_{4},
\end{equation}
and for any $g\in R_{Q}$, 
\begin{equation*}
  [\omega_{\psi}(U_{1}(0,0,c)g)\phi]\begin{pmatrix}
    \xi_{1}& \xi_{2}\\ 1 & 0 \\ 0 & 1\\ 0 & 0
  \end{pmatrix}=[\omega_{\psi}(g)\phi]\begin{pmatrix}
    \xi_{1}& \xi_{2}\\ 1 & 0 \\ 0 & 1\\ 0 & 0
  \end{pmatrix}.
\end{equation*}
Factoring the integration over $U_{1}$ and applying lemma \ref{lem:1} to functions $$(a,b)\mapsto \omega_\psi(U_{1}(a,b,0)g)\phi\begin{pmatrix}
    \xi_{1}& \xi_{2}\\ 1 & 0 \\ 0 & 1\\ 0 & 0
  \end{pmatrix}\quad \text{  and  }\quad(a,b)\mapsto \int_{F\backslash \A}\ud c \varphi_{2}(U_{1}(a,b,c)g),$$ 
  we deduce that \eqref{eq d} is equal to
\begin{equation}
\label{eq e}
\int\limits_{Z(\A)M_5(F)U_1(\A)U_2(\A)\bs C_Q(\A)}
\vph^{(U_3,\psi_{U_3}^{\alpha,\beta})}(g)
\int\limits_{U_Q^w(\A) \bs U(\A)}
f_{\chi;s}(wug) 
\theta_2^{(U_1, \psi_{U_1}^{-\alpha,-\beta})}(j(ug))\, du
\, dg,
\end{equation}
where $\psi_{U_1}^{\alpha,\beta}(U_1(a,b,c)) = 
\psi(\alpha a+ \beta b),$ $U_3=U_1U_2,$ and 
$\psi_3^{\alpha,\beta}=\psi_{U_2} \psi_{U_1}^{\alpha, \beta}.$
The group $M_{5}(F)$ acts on 
$U_1(\A)$ and permutes the nontrivial characters $\psi_{U_1}^{\alpha,\beta}$ transitively.  The stabilizer of $\psi_{U_1}:= \psi_{U_1}^{1,0}$ 
 is 
\begin{equation}
  \label{eq:9}M_6:= 
 \left\{ M_6(a_{1},a_{2},a_{4})
 \right\},\qquad \text{ where }
 M_6(a_{1},a_{2},a_{4})=
  M_{5}\left(a_{4}^{-1},\begin{pmatrix}
  a_{1} & a_{2} \\ 0 & a_{4}
\end{pmatrix}\right).
\end{equation} 
Hence equation \eqref{eq e} is equal to 
\begin{equation}
\label{eq f}
\int\limits_{Z(\A)M_6(F)U_1(\A)U_2(\A)\bs C_Q(\A)}
\vph^{(U_3,\psi_{U_3})}(g)
\int\limits_{U_Q^w(\A) \bs U(\A)}
f_{\chi;s}(wug) 
\theta_2^{(U_1, \ol\psi_{U_1})}(j(ug))\, du
\, dg,
\end{equation}
where $\psi_{U_3} = \psi_{U_1}\psi_{U_2}.$ 

Note that 
\begin{equation*}
  [\omega_\psi(U_{1}(a,b,0)g)\phi]
  \begin{pmatrix}
    \xi_{1} & \xi_{2}\\ 1 & 0 \\ 0 & 1\\ 0 & 0
  \end{pmatrix}=[\omega_\psi(g)\phi]
  \begin{pmatrix}
    \xi_{1}+a & \xi_{2}+b\\ 1 & 0 \\ 0 & 1\\ 0 & 0
  \end{pmatrix},
\end{equation*}
and that for $\xi_{1},\xi_{2}\in F$, $\psi(\alpha\cdot(a+\xi_{1})+\beta\cdot(b+\xi_{2}))=\psi(\alpha\cdot a+\beta\cdot b)$. We can combine the summation on $(\xi_{1},\xi_{2})$ with the integral over $(a,b)$. It 
follows that  $\theta_2^{(U_1, \ol\psi_{U_1})}(g)=I_0(\phi, g),$
defined in \eqref{I_0(phi, g) def}.
 Let $M_{6}= U_{6}T_{6}$ be the Levi decomposition.  It is not hard to see that both $I_0(\phi, g)$ and the function $g\mapsto f(wg)$
 are invariant on the left by $U_6(\A).$  
 So, \eqref{eq f} is equal to 
 \begin{equation}
\label{eq g}
\int\limits_{Z(\A)T_6(F)U_1(\A)U_2(\A)\bs C_Q(\A)}
\vph^{(U_4,\psi_{U_4})}(g)
\int\limits_{U_Q^w(\A) \bs U(\A)}
f_{\chi;s}(wug) 
\theta_2^{(U_1, \ol\psi_{U_1})}(j(ug))\, du
\, dg,
\end{equation}
where $U_4 = U_3U_6,$
and $\psi_{U_4}$ is the extension of $\psi_{U_3}$
to a character of $U_4$ which is trivial on $U_6.$ 
Now, 
$$
\vph^{(U_4,\psi_{U_4})}(g)
= 
\int_{F \bs \A} \vph_1^{(U_1, \psi_1)}\left(
\bpm 1&&&\\&1&r&\\&&1&\\&&&1\epm
g_1\right)
\vph_2^{(U_2,\psi_2)}\left(\bpm 1&r&&\\&1&&\\&&1&-r\\&&&1\epm
g_2\right).
$$
Let $N_1$ denote the standard maximal unipotent subgroup 
of $GSp_4$ and $N_2$ that of $GSO_4.$  Let 
$\psi_{N_1}^\gm$ and $\psi_{N_2}^\gm$ be the 
extensions of $\psi_{U_1}$ and $\psi_{U_2}$ to 
characters of $N_1(\A)$ and $N_2(\A)$ respectively, 
such that 
$$
\psi_{N_1}^\gm\bpm 1&&&\\&1&r&\\&&1&\\&&&1\epm
=\psi_{N_2}^\gm\bpm 1&r&&\\&1&&\\&&1&-r\\&&&1\epm
= \psi(\gm r).
$$
Then it follows from lemma \ref{lem:1} (and the cuspidality 
of $\vph_1, \vph_2,$)
that 
$$
\vph^{(U_4,\psi_{U_4})}(g)= \sum_{\gm \in F^\times}
\vph_1^{(N_1,\psi_{N_1}^\gm)}(g_1) 
\vph_2^{(N_2, \psi_{N_2}^{-\gm})}(g_2) = \vph^{(N, \psi_N^\gm)}(g),
$$
where $N=N_1N_2,$ a maximal unipotent subgroup of $C_Q,$
and for $\gm \in F^\times,$
$ \psi_N^\gm = \psi_{N_1}^\gm\psi_{N_2}^{-\gm}.$
We plug this in to \eqref{eq g}.  The group $T_6$ acts on 
the characters $\psi_N^\gm$ transitively, and 
the stabilizer of $\psi_N : = \psi_N^1$ is the center of $C_Q.$ 
Since $\vph^{(N, \psi_N)}(g) = W_\vph(g),$ this completes the 
proof of theorem \ref{thm: unfolding}.

\section{Preparation for the unramified calculation}
\label{sec: prep for unram}
In this section, we establish some results which describe the 
structure of the symmetric algebras of some representations
of  $Sp_4\times SL_2,$ and $Sp_4 \times SL_2 \times SL_2,$
which will be used to relate our local zeta integrals
to products of Langlands $L$-functions.

We first consider some representations of 
$Sp_4 \times SL_2.$ 
Let $\varpi_1$ and $\varpi_2$ denote the fundamental 
weights of $Sp_4$ and $\varpi$ that of $SL_2.$  
Write $V_{(n_1, n_2; m)}$ for the  irreducible $Sp_4 \times SL_2$-module with highest weight $n_1\varpi_1+n_2\varpi_2 + m\varpi,$
 and let $[n_1, n_2; m]$ denote its trace. 
 \begin{pro}
For $i,j,n_1, n_2$ and $m$ all
non-negative integers, let 
 $\mu_{i,j}(n_1,n_2;m)$ denote the multiplicity 
 of $V_{(n_1, n_2; m)}$ in $\sym^i V_{(1,0;1)}\otimes \sym^j V_{(1,0;0)}.$
 Then 
$$\begin{aligned}
&
\sum_{i,j,n_1,n_2,m=0}^\infty 
\mu_{i,j}(n_1,n_2; m) t_1^{n_1}t_2^{n_2} t_3^m x^i y^j =
\\
&\frac{1-t_1t_2t_3 x^3 y^2}{(1-t_1t_3x)(1-x^2)(1-t_2x^2)(1-t_1y)(1-t_3xy)(1-t_2t_3xy)(1-t_1x^2y)(1-t_2x^2y^2)}.\end{aligned}
$$
\end{pro}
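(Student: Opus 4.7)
My plan is to reinterpret the generating function as the bigraded $T$-character of the algebra of $U$-invariants
\begin{equation*}
A := \bigl(\sym^\bullet(V_{(1,0;1)} \oplus V_{(1,0;0)})\bigr)^U,
\end{equation*}
where $U$ is a maximal unipotent subgroup of $Sp_4 \times SL_2$.  This reinterpretation is valid because, for any rational module $M$, the multiplicity of the irreducible $V_\lambda$ in $M$ equals the dimension of the $\lambda$-weight space of $M^U$; applying this to $M = \sym^i V_{(1,0;1)} \otimes \sym^j V_{(1,0;0)}$ and summing over $(i,j)$ gives exactly the left-hand side of the proposition.

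The shape of the claimed formula---a single binomial numerator dividing a product of eight monomial factors---strongly suggests that $A$ is a complete intersection with eight generators and one relation, whose bidegrees and $T$-weights can be read off from the formula (one generator per denominator factor).  I would exhibit the eight generators explicitly as highest weight vectors of specified irreducible summands appearing in small symmetric powers of $V_{(1,0;1)} \oplus V_{(1,0;0)}$; for instance, the $Sp_4$-symplectic pairing of the two columns of $V_{(1,0;1)}$ (realized as a $4 \times 2$ matrix) gives the $(1-x^2)$ factor, while the highest weight vector of the $V_{(0,1;0)}$-summand of $\sym^2 V_{(1,0;1)}$ gives $(1-t_2 x^2)$, etc.  The description of all eight becomes transparent after identifying $V_{(1,0;1)} \oplus V_{(1,0;0)} \simeq V_{\mathrm{std}}^{Sp_4} \otimes \mathbb{C}^3$ as an $Sp_4$-module, with the $SL_2$-action being the natural one on the first two columns.

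The main obstacle will be to prove that these eight invariants actually generate $A$, and to pin down the unique generating relation in bidegree $(3,2)$ with weight $(1,1;1)$.  For generation, I would do a Hilbert-series comparison: the bigraded character
\begin{equation*}
\sum_{i,j} x^i y^j \operatorname{ch}(\sym^i V_{(1,0;1)} \otimes \sym^j V_{(1,0;0)}) = \det(1-x\,\pi_{(1,0;1)})^{-1}\det(1-y\,\pi_{(1,0;0)})^{-1}
\end{equation*}
can be projected onto the highest-weight multiplicity generating function via the Weyl character formula---multiplying by the Weyl denominator of $Sp_4 \times SL_2$ and extracting the strictly dominant chamber---and then matched term-by-term with the Hilbert series of the proposed complete intersection.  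A dimension count then forces the existence of a single syzygy in bidegree $(3,2)$, which one identifies by direct computation, most naturally as a determinantal identity tying together the three $Sp_4$-symplectic pairings of the three columns of the $4 \times 3$ matrix model and the $V_{(1,0;1)} \otimes V_{(1,0;0)}$-type generators.
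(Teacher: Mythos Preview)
Your plan is sound but takes a genuinely different route from the paper's.  The paper proceeds by pure combinatorics with no invariant theory: it writes $V_{(1,0;1)}$ as two copies of the $Sp_4$-standard representation on which the $SL_2$-torus acts by $\eta^{\pm 1}$, decomposes $\sym^n V_{(1,0;1)}$ accordingly, and then repeatedly applies King's explicit tensor-product rules for $V_{(m,0)}\otimes V_{(n,0)}$ and $V_{(n,j)}\otimes V_{(m,0)}$ to reduce the double generating series to a closed rational expression, summed by hand.

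Your route through the $U$-invariant ring $A=(\sym^\bullet(V_{(1,0;1)}\oplus V_{(1,0;0)}))^U$ is more structural.  The identification of the left-hand side with the multigraded $T$-character of $A$ is correct, and your low-degree generators (the symplectic pairing of the two columns giving $(1-x^2)$, the highest weight vector of $V_{(0,1;0)}\subset\sym^2 V_{(1,0;1)}$ giving $(1-t_2x^2)$, etc.) are accurately located.  What this buys you is a conceptual explanation of the shape of the answer: eight generators and one syzygy, i.e.\ a complete intersection.  What the paper's approach buys is that it is fully executed and self-contained, needing only the branching rules from \cite{King71}.

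One caution: the verification step you propose---computing $\det(1-x\,\pi_{(1,0;1)})^{-1}\det(1-y\,\pi_{(1,0;0)})^{-1}$, multiplying by the Weyl denominator of $Sp_4\times SL_2$, and extracting the dominant chamber---already \emph{is} the proposition, independent of any statement about generators of $A$.  So in practice your proof reduces to carrying out that rational-function manipulation (a perfectly valid alternative to the paper's use of King's rules), with the complete-intersection description of $A$ following afterward as a corollary rather than serving as the engine of the argument.  Be sure, if you write it up this way, that you actually perform the dominant-chamber extraction rather than leaving it as a plan; that is where all the work lies.
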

\begin{proof}
We first describe $\sym^j V_{(1,0;1)}.$ By treating $V_{(1,0;1)}.$ 
Write $V_{(n_1,n_2)}$ for the irreducible $Sp_4$-module with 
highest weight $n_1\varpi_1+n_2\varpi_2.$ Then we may 
regard $V_{(1,0;1)}$ as two copies of $V_{(1,0)}$ with 
the standard torus of $SL_2$ acting on them by eigenvalues, 
say, $\eta$ and $\eta^{-1}.$ 
Then, using the well known fact that $\sym^kV_{(1,0)}=V_{(j,0)},$
and the decomposition of $V_{(m,0)}\otimes V_{(n,0)}$ described 
in \cite{King71}, 
$$
\Tr\sym^nV_{(1,0;1)} = \sum_{n_1 =0}^n \eta^{n-2n_1}
\sum_{\ell = 0}^{\min(n_1, n-n_1)} \sum_{j=0}^\ell
[n-2\ell,j]
= \sum_{\ell =0}^{\lfloor \frac n2 \rfloor}
\sum_{j=0}^\ell
[n-2\ell,j; n-2\ell],
$$
whence
$$
\sum_{n=0}^\infty
x^n \Tr \sym^n V_{(1,0;1)}
= \sum_{j,k,n=0}^\infty x^{n+2j+2k} [n,j;n]
= \frac{1}{1-x^2} \sum_{j,n=0}^\infty [n,j;n] x^{n+2j}.
$$
Using \cite{King71} again to compute
$V_{(n,j)}\otimes V_{(m,0)}$ one obtains
$$\begin{aligned}
\sum_{n,m,j=0}^\infty
[n,j][m,0]
t^n x^{n+2j}y^m 
&=
\frac1{1-txy}
\sum_{{n,j,i_2,k=0}\atop {n+i_2 \ge k}}^\infty 
[n+m+i_2-k_2, j+k_2] t^{n}x^{n+2j+2i_2}y^{m+i_2+k}.
\end{aligned}
$$
It follows that 
$$\begin{aligned}
\sum_{i,j,n_1,n_2,m=0}^\infty 
\mu_{i,j}(n_1,n_2; m) &t_1^{n_1}t_2^{n_2} t_3^m x^i y^j\\& =
\frac{1}{1-x^2}\frac1{1-t_3xy}
\sum_{{n_1,n_2,i_2,k,m=0}\atop {n+i_2 \ge k}}^\infty 
t_1^{n_1+m+i_2-k}t_2^{n_2+k} t_3^{n_1}x^{n_1+2n_2+2i_2}y^{m+i_2+k}\\
& =
\frac{1}{1-x^2}\frac1{1-t_3xy}\frac1{1-t_2x^2}
\frac1{1-t_1y}
\sum_{{n_1,i_2,k=0}\atop {n+i_2 \ge k}}^\infty 
t_1^{n_1+i_2-k}t_2^{k} t_3^{n_1}x^{n_1+2n_2+2i_2}y^{i_2+k},
\end{aligned}
$$
and the result then follows from the identity $$
\sum_{n_1=0}^\infty 
\sum_{n_2=0}^\infty 
\sum_{j,k=0}^{n_2} 
u^{n_2} v^k w^j 
= \frac{1-u^2 vw}{(1-u)(1-uv)(1-uw)(1-uvw)}.
$$
\end{proof}
\begin{cor}\label{cor: sym alg V x W}
For $n=(n_1, n_2, n_3)$ let $[n]=[n_1,n_2;n_3],$ and, 
let 
$$\bm 
a= {}^t\bbm1&0&1&1& 2&2& 2\ebm\\
\\
 b={}^t \bbm0&1&1&1&0&1&2\ebm\em,
\qquad
g= {}^t
\bbm
1&1&0&0&0&1&0\\
0&0&0&1&1&0&1\\
1&0&1&1&0&0&0
\ebm 
$$
Then
$
\sum_{i=0}^\infty 
x^i \Tr \sym^i V_{(1,0;1)}
\sum_{j=0}^\infty 
y^j \Tr \sym^j V_{(1,0;0)}$
equals $$\frac1{1-x^2}
\left[
\sum_n 
[n\cdot g] x^{n \cdot a } y^{n \cdot b}
-
\sum_n 
[n\cdot g+(1,1,1)] x^{n \cdot a +3} y^{n \cdot b+2}\right],
$$
where $n$ is summed over row vectors $n=(n_1, \dots, n_7)\in \Z_{\ge 0}^7.$
\end{cor}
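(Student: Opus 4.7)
The plan is to deduce the corollary directly from the preceding proposition by expanding its rational generating function as a geometric series in $t_1,t_2,t_3$ and then passing from formal monomials to characters.

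First I would note that by the very definition of the multiplicities,
\begin{equation*}
\sum_{i,j\ge0} x^i y^j\,\Tr\sym^i V_{(1,0;1)}\,\Tr\sym^j V_{(1,0;0)} = \sum_{i,j,n_1,n_2,m}\mu_{i,j}(n_1,n_2;m)\,[n_1,n_2;m]\,x^i y^j.
\end{equation*}
This is exactly what one obtains from the identity of the proposition by substituting $t_1^{n_1}t_2^{n_2}t_3^m\mapsto[n_1,n_2;m]$ on both sides. Since the monomials $t_1^{n_1}t_2^{n_2}t_3^m$ form a $\Z$-basis of $\Z[t_1,t_2,t_3]$, this substitution is well defined as a $\Z[[x,y]]$-linear operation, and the corollary is established once the right hand side of the proposition has been expanded as a $\Z[[x,y]]$-linear combination of such monomials.

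Next I would expand each of the seven denominator factors other than $1-x^2$ as a geometric series. The columns of the matrix $g$ and the entries of the vectors $a$ and $b$ are tailored precisely to record, in a common ordering, the $(t_1,t_2,t_3)$-exponents and the $(x,y)$-exponents of the seven factors
\begin{equation*}
1-t_1t_3x,\ 1-t_1y,\ 1-t_3xy,\ 1-t_2t_3xy,\ 1-t_2x^2,\ 1-t_1x^2y,\ 1-t_2x^2y^2;
\end{equation*}
equivalently, the denominator of the proposition equals $(1-x^2)\prod_{k=1}^{7}\bigl(1-t_1^{g_{k,1}}t_2^{g_{k,2}}t_3^{g_{k,3}}x^{a_k}y^{b_k}\bigr)$. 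Expanding each factor as a geometric series and indexing the sevenfold product by $n=(n_1,\dots,n_7)\in\Z_{\ge0}^{7}$ yields
\begin{equation*}
\prod_{k=1}^{7}\frac{1}{1-t_1^{g_{k,1}}t_2^{g_{k,2}}t_3^{g_{k,3}}x^{a_k}y^{b_k}} = \sum_{n\in\Z_{\ge0}^{7}} t_1^{(n\cdot g)_1}t_2^{(n\cdot g)_2}t_3^{(n\cdot g)_3}\,x^{n\cdot a}\,y^{n\cdot b}.
\end{equation*}

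Finally I would multiply by the numerator $1-t_1t_2t_3x^3y^2$, which produces the difference of two such sevenfold sums, the second with $(n\cdot g)$ shifted by $(1,1,1)$ and with the $(x,y)$-exponents shifted by $(3,2)$, while the factor $1/(1-x^2)$ is pulled out unchanged. Applying the substitution $t_1^{n_1}t_2^{n_2}t_3^m\mapsto[n_1,n_2;m]$ to each term then reproduces the right hand side of the corollary verbatim. There is no real analytic or representation-theoretic obstacle here; the only mildly annoying step is the bookkeeping, namely verifying that $g$, $a$, and $b$ have been transcribed correctly from the seven denominator factors listed above.
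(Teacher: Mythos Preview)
Your proposal is correct and is exactly the argument the paper has in mind: the corollary is stated immediately after the proposition with no separate proof, and your expansion of the rational function via geometric series---with the seven factors matched to the rows of $g$ and the entries of $a,b$---followed by the substitution $t_1^{n_1}t_2^{n_2}t_3^m\mapsto[n_1,n_2;m]$ is precisely how the corollary is meant to be read off. Your ordering of the seven factors matches the rows of $g$, $a$, $b$ as given, and the numerator $1-t_1t_2t_3x^3y^2$ accounts for the $(1,1,1)$ and $(3,2)$ shifts in the second sum.
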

Our next result 
describes the decomposition of $\sym^i V_{(1,0;1)} \otimes \sym^j V_{(1,0;0)} \otimes \sym^k V_{(1,0;0)}.$
It is an identity of rational functions in $6$ variables. To keep the 
notation short, we often reflect dependence only on arguments which will vary.
Let
$$\begin{aligned}
d(t_1, t_2)&= 
(1-t_1t_3x)(1-t_2x^2)(1-t_1y)(1-t_3xy)(1-t_2t_3xy)(1-t_1x^2y)(1-t_2x^2y^2)\\
& = \left[\sum_{n \in \Z_{\ge 0}^7 }
t_1^{n\cdot g_1}t_2^{n\cdot g_2}t_3^{n\cdot g_3} x^{n \cdot a} y^{n\cdot b}\right]^{-1},
\end{aligned}
$$
where $g_1, g_2$ and $g_3$ are the three columns of the matrix $g$
in corollary \ref{cor: sym alg V x W}.
Define rational functions $\gm_1, \dots , \gm_7$ by
$$
\begin{aligned}
\sum_{n_1=0}^{N_1}
\sum_{n_2=0}^{N_2}
\sum_{k=0}^{n_1+n_2} \!\!
u^{n_1} v^{n_2} w^k
& = \gm_1(u,v,w) + \gm_2(u,v,w) u^{N_1}
+ \gm_3(u,v,w)v^{N_2}
+\gm_4(u,v,w)u^{N_1}v^{N_2}\\&
+\gm_5(u,v,w)(uw)^{N_1}+\gm_6(u,v,w) (vw)^{N_2}
+\gm_7(u,v,w)(uw)^{N_1}(vw)^{N_2},
\end{aligned},
$$
and let 
$c_i =\gm_i(t_1/z,  t_1 z/t_2, t_2z/t_1).$
\begin{pro}\label{pro: mu ijk n1 n2 m}
Let $\mu_{i,j,k}(n_1, n_2; m)$
denote the multiplicity 
of $V_{(n_1, n_2; m)}$ in 
$\sym^i V_{(1,0;1)} \otimes \sym^j V_{(1,0;0)} \otimes \sym^k V_{(1,0;0)}.$
Then 
$$\begin{aligned}
&
\sum_{i,j,k, n_1,n_2,m=0}^\infty 
\mu_{i,j,k}(n_1, n_2; m) t_1^{n_1} t_2^{n_2} t_3^m 
x^i y^j z^k = (1-x^2)^{-1}(1-t_1z)^{-1}\times \\
&\times
\frac{c_1
\nu( t_2z)}
{d(z,t_2)}
+\frac{c_2
\nu( t_1t_2)}
{d(t_1,t_2)}
+\frac{c_3
\nu( t_1z^2)}
{d(z,t_1z)}
+\frac{c_4
\nu( t_1^2z)}
{d(t_1,t_1z,)}
+\frac{c_5
\nu( t_2^2z)}
{d(t_2 z, t_2)}
+\frac{c_6
\nu( t_2z^3)}
{d(z, t_2z^2)}
+\frac{c_7
\nu( t_2^2z^3)}
{d(t_2 z, t_2z^2)},
\end{aligned}$$
where 
$c_1, \dots, c_7$ and $d$ are as above and  let $\nu(u) 
= 1-ut_3x^3y^2.$ 
\end{pro}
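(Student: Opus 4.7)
The plan is to deduce Proposition \ref{pro: mu ijk n1 n2 m} from the previous Proposition by tensoring the $Sp_4$ part of each irreducible with $V_{(k,0)}$, applying King's tensor-product rule \cite{King71}, and then reorganizing the $k$-summation by the partial-sum identity that defines $\gm_1,\dots,\gm_7$. Since $V_{(1,0;0)}$ is trivial on the $SL_2$ factor, the $SL_2$-label $M$ is preserved, so
\[
\mu_{i,j,k}(N_1,N_2;M) = \sum_{n_1,n_2 \ge 0} \mu_{i,j}(n_1,n_2;M)\, c^{N_1,N_2}_{n_1,n_2,k},
\]
where $c^{N_1,N_2}_{n_1,n_2,k}$ is the multiplicity of $V_{(N_1,N_2)}$ in $V_{(n_1,n_2)} \otimes V_{(k,0)}$ as $Sp_4$-modules. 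Writing $P(a,b,t_3,x,y) := \nu(ab)/[(1-x^2)d(a,b)]$ for the generating function of the previous Proposition (in renamed variables), the generating function $F$ of Proposition \ref{pro: mu ijk n1 n2 m} equals $\sum_{n_1,n_2}\bigl[a^{n_1}b^{n_2}\bigr]P(a,b,t_3,x,y) \cdot H(n_1,n_2;t_1,t_2,z)$, where $H(n_1,n_2;t_1,t_2,z) := \sum_{k,N_1,N_2} c^{N_1,N_2}_{n_1,n_2,k}\,t_1^{N_1}t_2^{N_2}z^k$ and $\bigl[a^{n_1}b^{n_2}\bigr]$ denotes coefficient extraction.

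The first main step is to compute $H$ via King's rule. In the form used in the proof of the previous Proposition, King expresses $V_{(n_1,n_2)} \otimes V_{(k,0)}$ as a sum of irreducibles parameterized by two auxiliary non-negative integers subject to a box-type constraint in which $k$ is bounded above by a linear combination of the form $\le n_1 + n_2 + (\text{aux})$. Summing $z^k$ against this constraint and changing summation variables produces a partial geometric sum of the shape $\sum_{p=0}^{n_1}\sum_{q=0}^{n_2}\sum_{k=0}^{p+q} u^p v^q w^k$ with $(u,v,w) = (t_1/z,\; t_1z/t_2,\; t_2z/t_1)$, so that $uw = t_2$ and $vw = z^2$. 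The defining identity for $\gm_1,\dots,\gm_7$ then splits this sum into seven additive pieces according to which of the boundary values $p = n_1$ or $q = n_2$ is attained, possibly combined with an extra $w$-saturation.

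The second main step is to substitute the seven pieces back into the coefficient pairing and carry out the residual sum over $(n_1,n_2,M,i,j)$ using the previous Proposition. Each of the seven pieces reduces to $P$ evaluated at a specific pair of arguments: $(z,t_2)$, $(t_1,t_2)$, $(z,t_1z)$, $(t_1,t_1z)$, $(t_2z,t_2)$, $(z,t_2z^2)$, $(t_2z,t_2z^2)$ respectively, producing the seven denominators $d(\cdot,\cdot)$ in the statement and the matching arguments of $\nu$. The prefactor $1/(1-x^2)$ is inherited from $P$, and the prefactor $1/(1-t_1z)$ arises from the ``trivial-branching'' direction $V_{(0,0)}\otimes V_{(k,0)} = V_{(k,0)}$ in $H$, which contributes $\sum_k(t_1z)^k$.

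The main obstacle will be the bookkeeping that identifies the two auxiliary King indices with $(p,q)$ in the partial-sum identity and thereby forces the substitution $(u,v,w) = (t_1/z,\; t_1z/t_2,\; t_2z/t_1)$. Concretely, one must verify that under King's decomposition the monomial $t_1^{N_1}t_2^{N_2}z^k$ factors as $t_1^{n_1}t_2^{n_2}$ times exactly $u^p v^q w^k$ after this substitution, and that the auxiliary upper bounds match $p \le n_1$, $q \le n_2$, $k \le p+q$. Once this monomial correspondence is nailed down, the seven-fold split of $H$ followed by the geometric resummations against the previous Proposition's closed form is essentially mechanical.
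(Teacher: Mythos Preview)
Your approach is correct and is essentially the same as the paper's: apply King's formula for $V_{(n_1,n_2)}\otimes V_{(k,0)}$ to each irreducible appearing in the previous result, recognize the resulting triple sum as the partial-sum form defining $\gamma_1,\dots,\gamma_7$ with the substitution $(u,v,w)=(t_1/z,\,t_1z/t_2,\,t_2z/t_1)$, and resum each of the seven pieces against the earlier closed form. The paper phrases this via Corollary~\ref{cor: sym alg V x W} (the sum over $\Z_{\ge 0}^7$) rather than by coefficient extraction from the rational function $P$, but these are equivalent bookkeeping devices; your substitution $a\mapsto z,\,t_1,\,t_2z$ and $b\mapsto t_2,\,t_1z,\,t_2z^2$ reproduces exactly the seven $d(\cdot,\cdot)$ and $\nu(\cdot)$ arguments in the statement.

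One small imprecision: the factor $1/(1-t_1z)$ does not come from the special case $V_{(0,0)}\otimes V_{(k,0)}$. It arises for every $(n_1,n_2)$ from the free summation over the shifted King index $\ell$, since the contribution to $t_1^{N_1}z^k$ carries a factor $(t_1z)^{\ell}$ with $\ell$ unconstrained above. This does not affect the validity of your argument, only the heuristic you gave for that prefactor.
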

\begin{proof}
From \cite{King71} again, one deduces that
$$
[m_1,m_2]\cdot \sum_{\ell=0}^\infty 
[\ell,0]x^\ell
=
\sum_{\ell=0}^\infty
\sum_{i_1=0}^{m_1}
\sum_{i_2=0}^{m_2}
\sum_{k=0}^{i_1+i_2}
[i_1+i_2-k+\ell, m_2-i_2+k] x^{\ell+m_1-i_1+i_2+k}.
$$
Combining with corollary \ref{cor: sym alg V x W} 
gives $$\begin{aligned}
\sum_{i,j,k,n_1,n_2,m=0}^\infty 
\mu_{i,j,k}(n_1, n_2; m) t_1^{n_1} t_2^{n_2} t_3^m 
x^i &y^j z^k 
=\\
\frac{1}{1-x^2}\frac1{1-t_1z}
\sum_{n\in \Z_{\ge 0}^7}
x^{n\cdot a} y^{n\cdot b}
t_3^{n\cdot g_3}&\left(
\sum_{i_1=0}^{n\cdot g_1}
\sum_{i_2=0}^{n\cdot g_2}
\sum_{k=0}^{i_1+i_2}
t_1^{i_1+i_2-k}
t_2^{n\cdot g_2-i_2+k}
z^{n\cdot g_1 -i_1+i_2+k}\right.\\
&\left.
-
x^3y^2t_3
\sum_{i_1=0}^{n\cdot g_1+1}
\sum_{i_2=0}^{n\cdot g_2+1}
\sum_{k=0}^{i_1+i_2}
t_1^{i_1+i_2-k}
t_2^{n\cdot g_2+1-i_2+k}
z^{n\cdot g_1+1 -i_1+i_2+k}
\right),
\end{aligned}
$$

and   
and simplifying this rational function gives the result.
\end{proof}

\begin{pro}\label{pro: decomp of V times V'}
Let $V_{(n_1, n_2; n_3; n_4)}$ denote the irreducible 
representation of $Sp_4 \times SL_2 \times SL_2$ 
such that $Sp_4$ acts with highest weight $n_1\varpi_1+n_2\varpi_2,$
the first $SL_2$ acts with highest weight $n_3,$
and the second $SL_2$ acts with highest weight $n_4.$ 
For $n=(n_1, n_2; n_3;n_4),$ let $\mu_{i,j}(n)$ denote the 
multiplicity of $V_n$ in $\sym^i V_{(1,0;1;0)} \otimes \sym^j V_{(1,0;0;1)}.$ 
Then 
$$\sum_{i,j=0}^\infty
\sum_{n \in \Z_{\ge 0}^4} \mu_{i,j}(n) t_1^{n_1}t_2^{n_2}t_3^{n_3}t_4^{n_4} x^i y^j
= \frac{\nu(x,y,t)}{\delta(x,y,t)},
$$
where 
$$\begin{aligned}
\nu(x,y,t)=&1 - t_1 t_2 t_3 t_4^2 x^3 y^2 - t_1 t_2 t_3^2 t_4 x^2 y^3 - 
 t_1^2 t_3 t_4 x^3 y^3 - t_1^2 t_2 t_3 t_4 x^3 y^3 - t_1 t_2 t_3^2 t_4 x^4 y^3 \\&- 
 t_1 t_2 t_3 t_4^2 x^3 y^4 - t_1^2 t_2 t_3^2 x^4 y^4 - 
  t_1^2 t_2 t_4^2 x^4 y^4 + 2 t_1^2 t_2 t_3^2 t_4^2 x^4 y^4 - 
 t_2^2 t_3^2 t_4^2 x^4 y^4 \\&+ t_1^3 t_2 t_3 t_4^2 x^5 y^4 + 
 t_1 t_2^2 t_3^3 t_4^2 x^5 y^4 + t_1^3 t_2 t_3^2 t_4 x^4 y^5 + 
 t_1 t_2^2 t_3^2 t_4^3 x^4 y^5 + t_1^2 t_2 t_3^3 t_4 x^5 y^5\\& + 
 t_1^2 t_2^2 t_3^3 t_4 x^5 y^5 + t_1^2 t_2 t_3 t_4^3 x^5 y^5 + 
 t_1^2 t_2^2 t_3 t_4^3 x^5 y^5 + t_1^3 t_2 t_3^2 t_4 x^6 y^5 + 
 t_1 t_2^2 t_3^2 t_4^3 x^6 y^5 \\&+ t_1^3 t_2 t_3 t_4^2 x^5 y^6 + 
 t_1 t_2^2 t_3^3 t_4^2 x^5 y^6 - t_1^4 t_2 t_3^2 t_4^2 x^6 y^6 + 
 2 t_1^2 t_2^2 t_3^2 t_4^2 x^6 y^6 - t_1^2 t_2^2 t_3^4 t_4^2 x^6 y^6\\&
  - 
 t_1^2 t_2^2 t_3^2 t_4^4 x^6 y^6 - t_1^3 t_2^2 t_3^3 t_4^2 x^7 y^6 - 
 t_1^3 t_2^2 t_3^2 t_4^3 x^6 y^7 - t_1^2 t_2^2 t_3^3 t_4^3 x^7 y^7\\& - 
 t_1^2 t_2^3 t_3^3 t_4^3 x^7 y^7 - t_1^3 t_2^2 t_3^2 t_4^3 x^8 y^7 - 
 t_1^3 t_2^2 t_3^3 t_4^2 x^7 y^8 + t_1^4 t_2^3 t_3^4 t_4^4 x^{10} y^{10},
\end{aligned}
$$
$$\begin{aligned}
\delta(x,y,t)
=&
(1-t_1 t_3 x)  (1-x^2 ) (1-t_2 x^2 ) (1-t_1 t_4 y ) (1-y^2 ) (1-t_2 y^2 )(1-x^2 y^2 )
 (1-t_3 t_4 x y ) \\& (1-t_2 t_3 t_4 x y )
 (1-t_1 t_4 x^2 y ) (1-t_1 t_3 x y^2 ) (1-t_1^2 x^2 y^2 )(1-t_2 t_3^2 x^2 y^2 ) (1-t_2 t_4^2 x^2 y^2)
 \end{aligned}
$$
\end{pro}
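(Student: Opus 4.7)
My plan is to decompose the two symmetric powers separately and then combine them using the $Sp_4$ tensor-product rules. Observe first that $V_{(1,0;1;0)}$ is the exterior tensor product, over $(Sp_4 \times SL_2) \times SL_2$, of $V_{(1,0;1)}$ with the trivial representation of the second $SL_2$, and symmetrically for $V_{(1,0;0;1)}$. Specializing the first proposition of this section to $y = 0$ therefore produces, in the variables $t_1, t_2, t_3, x$, the generating function for the multiplicities $\mu_i^{(1)}(a_1, a_2; n_3)$ of $V_{(a_1, a_2; n_3; 0)}$ in $\sym^i V_{(1,0;1;0)}$; the analogous generating function for $\sym^j V_{(1,0;0;1)}$ is obtained by replacing $(t_3, x)$ with $(t_4, y)$.

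Since the two $SL_2$-factors act on disjoint tensor factors, the $Sp_4 \times SL_2 \times SL_2$-multiplicity splits as
\begin{equation*}
\mu_{i,j}(n_1, n_2; n_3; n_4) = \sum_{a_1, a_2, b_1, b_2} \mu_i^{(1)}(a_1, a_2; n_3)\, \mu_j^{(1)}(b_1, b_2; n_4)\, c^{(n_1, n_2)}_{(a_1, a_2),(b_1, b_2)},
\end{equation*}
where $c^{(n_1, n_2)}_{(a_1, a_2),(b_1, b_2)}$ is the multiplicity of $V_{(n_1, n_2)}$ in $V_{(a_1, a_2)} \otimes V_{(b_1, b_2)}$ as an $Sp_4$-module. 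Invoking King's tensor-product rule \cite{King71}, I would write $c^{(n_1, n_2)}_{(a_1, a_2),(b_1, b_2)}$ as an explicit (signed) sum over branching indices, exactly as in the proof of Proposition \ref{pro: mu ijk n1 n2 m}.

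Substituting the resulting expression into the multiple sum above reduces the generating function to a finite combination of geometric series in $t_1, t_2, t_3, t_4, x, y$, which can be summed by standard manipulations. I expect the main obstacle to be the ensuing rational-function simplification: although each individual geometric sum is elementary, the combination yielding the compact quotient $\nu(x,y,t)/\delta(x,y,t)$ involves substantial cancellation, as signalled by the more than thirty monomials of mixed sign appearing in $\nu$ and by coefficients such as $2$ that arise only after combining several contributions. This final algebraic identity in six variables is best verified with a computer algebra system.
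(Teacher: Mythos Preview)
Your approach is correct in principle but takes a different and more laborious route than the paper's. The paper does not compute the general $Sp_4$ tensor product $V_{(a_1,a_2)}\otimes V_{(b_1,b_2)}$ at all; instead it exploits Proposition~\ref{pro: mu ijk n1 n2 m} directly. The observation is that, upon restriction to the maximal torus of the second $SL_2$, the module $V_{(1,0;0;1)}$ becomes two copies of $V_{(1,0;0)}$ with torus eigenvalues $\tau,\tau^{-1}$, so that $\sym^j V_{(1,0;0;1)}\cong\bigoplus_{a+b=j}\tau^{a-b}\,\sym^a V_{(1,0;0)}\otimes\sym^b V_{(1,0;0)}$. Hence the full character of $\sym^i V_{(1,0;1;0)}\otimes\sym^j V_{(1,0;0;1)}$ is obtained from the three-variable generating function $p/q$ of Proposition~\ref{pro: mu ijk n1 n2 m} by the substitution $(y,z)\mapsto(y\tau,y\tau^{-1})$, and the multiplicity of the highest weight $n_4$ for the second $SL_2$ is then extracted via the Weyl denominator identity $(\tau f(\tau)-\tau^{-1}f(\tau^{-1}))/(\tau-\tau^{-1})=p(x,y\tau,y\tau^{-1},t')/q(x,y\tau,y\tau^{-1},t')$. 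This reduces everything to a single polynomial identity in $x,y,\tau,t_1,t_2,t_3$, which is verified by machine.

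Your route, by contrast, requires the full King rule for $c^{(n_1,n_2)}_{(a_1,a_2),(b_1,b_2)}$, and this is \emph{not} ``exactly as in the proof of Proposition~\ref{pro: mu ijk n1 n2 m}'': that proof only ever tensors against representations of the form $V_{(\ell,0)}$, for which the decomposition is a simple triple sum with non-negative terms. The general $Sp_4$ tensor product involves King's modification rules with alternating signs and does not package as cleanly into geometric series. Your plan would still go through with enough computer algebra, but the paper's device of routing through the $Sp_4\times SL_2$ three-variable problem and then applying the $SL_2$ character trick avoids this complication entirely and keeps the final CAS check to a single polynomial identity.
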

\begin{proof}
Let $p$ and $q$ be the polynomials such that 
$$\sum_{i,j,k=0}^\infty 
\mu_{i,j,k,n_1,n_2,m}(n_1, n_2; m) t_1^{n_1} t_2^{n_2} t_3^m 
x^i y^j z^k =\frac{p(x,y,z,t)}{q(x,y,z,t)}.
$$
They may be computed explicitly using proposition \ref{pro: mu ijk n1 n2 m}.
Set $t'=(t_1,t_2,t_3),$
and $$f(x,y,t',t_4)=\sum_{i,j=0}^\infty\sum_{n \in \Z_{\ge 0}^4} \mu_{i,j}(n) t_1^{n_1}t_2^{n_2}t_3^{n_3}t_4^{n_4}x^iy^j.$$ By
regarding $V_{(1,0;0;1)}$ as two copies of $V_{(1,0;0)}$ with the standard
torus of the second $SL_2$ acting with eigenvalues $\tau$ and $\tau^{-1},$ 
say, we see that
 $$
 \frac{\tau f(x,y,t',\tau) -\tau^{-1} f(x,y,t',\tau^{-1})
}{(\tau - \tau^{-1})}=
\sum_{r=0}^{n_4} \tau^{n_4-2r}=\frac{p(x,y\tau, y\tau^{-1}, t')}{q(x,y\tau, y\tau^{-1}, t')}.
$$
So it suffices to verify that 
$$\begin{aligned}
p(x,y\tau,& y\tau^{-1}, t')(\tau - \tau^{-1})\delta(x,y,t',\tau)
\delta(x,y,t',\tau^{-1})
\\&=q(x,y\tau, y\tau^{-1}, t')[\tau \nu(x,y,t',\tau)\delta(x,y,t',\tau^{-1}) -\tau^{-1} \nu(x,y,t',\tau^{-1})\delta(x,y,t',\tau)],\end{aligned}$$
which is easily done with a computer algebra system.
\end{proof}

\section{Local zeta integrals I}
\label{sec: local zeta integrals}
\subsection{Definitions and notation}
\label{ss: local zeta, defs and notation}
The next step in the analysis of our global integral 
is to study the corresponding local zeta integrals. 
We introduce a ``local'' notation which will be used throughout 
sections \ref{sec: local zeta integrals} and  \ref{section: unramified calculation}, \ref{section: local zeta integrals ii}
In these section $F$ is a local field which may be archimedean or
nonarchimedean. 
Abusing notation, we denote the $F$-points
of an $F$-algebraic group $H$ by $H$ as well.
We fix an additive character $\psi$ of $F,$ 
and define a character $\psi_N:N \to \C$ 
by the same formula used in the global setting.
Similarly, if we fix 
a triple $\chi=(\chi_1, \chi_2, \chi_3)$ of 
characters of $F^\times,$ and 
$s\in \C^3,$ 
then formula \eqref{ def: (chi; s)} now defines a 
character of $M_P.$ 
We write $\Ind_{P}^G(\chi; s)$ for the corresponding 
(unnormalized) induced representation ($K$-finite vectors, 
relative to some fixed maximal compact $K$).
We shall assume that the characters
in $\chi$ are unitary, but not that they are normalized,
and define $(\chi;s)$ for $s \in \C^2$ by the convention
 $s_3 = \frac{3s_1+3s_2}2.$ 
 Thus we have a two parameter family of 
 induced representations and let $\Flat(\chi)$ 
 denote the space of flat sections. 
 
Let $\c S( \Mat_{4 \times 2})$ 
be the Bruhat-Schwartz space, which we equip with 
an action $\omega_{\psi,l}$ 
of $R_Q$ as in the global setting, 
and define $I_0 : \c S(\Mat_{4\times 2}) \times R_Q \to \C$
by replacing $\A$ by $F$ in \eqref{I_0(phi, g) def}.

Next, take $\pi$ to be a $\psi_N$-generic 
irreducible 
admissible representation of $C_Q$ 
with $\psi_N$-Whittaker model
$\c W_{\psi_N}(\pi),$ and with central character
$\chi_1^{-3}\chi_2^{-3} \chi_3^2.$ 
 
For $W \in \c W_{\psi_N}(\pi), f \in \Flat(\chi)$
and $\phi \in \c S(\Mat_{4\times 2}),$ define
the corresponding local zeta integral 
to be the local analogue of 
\eqref{eq: global unfolded}, namely:
\begin{equation}
\label{eq: local zeta integral def}
I(W, f, \phi; s) := \int_{ZU_4\bs C_Q} W(g) \int_{U^w_Q \bs U_Q}
f_{\chi;s}(wug)I_0(\phi, ug)\, du \, dg.
\end{equation}
In addition to the above notation, for $1 \le i,j \le r, i \ne j,$ let  
$x_{ij}(r)=I_{12}+rE_{i,j} - rE_{13-j,13-i},$
where $I_{12}$ is the $12\times 12$ identity matrix and 
$E_{i,j}$ is the matrix with a one at the $i,j$ entry and 
zeros everywhere else, and let 
$$
\Xi_0 := \bpm 0&0 \\ 1& 0 \\ 0 & 1 \\ 0 & 0 \epm, \qquad
\Xi_2(a):=\bpm a_1&a_2 \\ 0&a_4\\0&0\\0&0\epm, \qquad
(a = (a_1,a_2,a_4) \in F^3).
$$

\subsection{Inital computations}
\label{ss: initial computations}
In this section we carry out some initial 
computations with local zeta integrals which will 
be used in both the proof of convergence in section \ref{ss: convergence} and in the unramified computations 
carried out in section \ref{section: unramified calculation}.

The image of the function $x_{23}$ maps 
isomorphically onto the one dimensional quotient 
of $U_4\bs N,$ 
and the function $g \mapsto f_s^\circ(wg)$ is invariant by the image
of $x_{23}$ on the left.
Moreover, $W(x_{23}(x_4)g)
= \ol\psi(x_4) W(g),
$
while
$$\left[\omega_\psi\left( x_{23}(x_4) ug \right).\phi\right]
(\Xi_0 + \Xi_2(x_1,x_2,0)) 
=\left[\omega_\psi\left( ut \right).\phi\right]
(\Xi_0 + \Xi_2(x_1,x_2,x_4))$$
Let 
$$\begin{aligned}
\III(\phi) &:= 
\int_{F^3} \phi
\bpm r_1&r_2 \\ 1&r_4\\0&1\\0&0\epm\ol\psi(r_1+r_4) \, dr,
\qquad 
( \phi \in \c S(\Mat_{4\times 2}))
\\
\II(f,\phi, s) 
&:= 
\int_{U_Q^w\bs U_Q}
f_{\chi;s}(wu) \III(\omega_{\psi, l}(u).\phi) \, du,\qquad 
( \phi \in \c S(\Mat_{4\times 2}), f \in \Flat(\chi))
\\
I_1( W, f, \phi; s) 
&:= \int_{Z\bs T} 
W(t)   
\II( R(t). f, \omega_{\psi, l}(t). \phi, s) 
\ \delta_B^{-1}(t)\, dt,\end{aligned}$$
where $\phi \in \c S(\Mat_{4\times 2}), f \in \Flat(\chi), 
W \in \c W_{\psi_N}(\pi),$ and $R$ is right translation.
Then expressing Haar measure on $C_Q$ in 
terms of Haar measures on $T, N$ and $K,$
and using $x_{23}$ to parametrize $U_4\bs N$ 
yields 
\begin{equation}\label{eq: I = int of I1 over K}
I(W,f, \phi; s) 
= \int_K I_1(R(k).W, R(k).f, \omega_{\psi, l}(k).\phi; s) \, dk
\end{equation}
where  $K$ is the maximal 
compact. 

The integral $\III(\phi)$ is absolutely 
convergent. Indeed, $\III(\phi) = \phi_1(\Upsilon_0),$
where $\phi_1$ is the Schwartz function obtained 
by taking Fourier transform of $\phi$ in three of the 
eight variables, and $\Upsilon_0$ is a matrix 
with entries $0$ and $1.$ We study the dependence 
on $u \in U_Q^w \bs U_Q$ and $t \in Z \bs T$ using 
the local analogues of 
\eqref{eq: Weil rep fmlas}. A remark is in order, regarding the Weil index $\gm_{\psi, \det m_Q^1( g_1, g_2)}$ which appears in the third formula.
In order to reconcile the local and global cases, one should think of this 
as the ratio $\gm_{\psi, \det m_Q^1( g_1, g_2)}/\gm_{\psi, 1}.$ The denominator can be omitted because the global $ \gm_{\psi, 1}$ is trivial. 
In the local setting $\gm_{\psi, 1}$ may not be trivial, but 
$\gm_{\psi, a^2} = \gm_{\psi, 1}$ for any $a,$ and $\det m_Q^1( g_1, g_2)$
is always a square, so the ratio is always trivial.

Now, let $U_0 \subset U_Q$ be the subgroup 
corresponding to the variables, 
$x_1,x_2,x_4, y_3, y_5, y_6,$ $z_1,z_2,z_3$ and 
$z_5.$  That is, the subset in which 
all other variables equal zero. 
Let $U_7 \subset U_Q$ be the subgroup 
defined by the condition that each 
variable listed above is $0,$ and, in addition, 
$y_7=y_8=0.$ 
Then $U_0U_7$ maps isomorphically onto the quotient
$U_Q^w\bs U_Q.$
We parametrize $U_0$ and $U_7$ 
using the coordinates inherited from $U_Q.$ 
A direct computation 
using \eqref{eq: Weil rep fmlas}  shows that for
$u_0 \in U_0, u_7 \in U_7,$
and $t \in T,$ 
$\III(\omega_\psi( tu_0  u_7).\phi)$
is equal to  
\begin{equation}\label{eq: III(u0 t u7 phi)}
 |t ^{\beta_1 + \beta_2 + \beta _4 }|
|\det t|^{\frac 12}
\psi(x_1t^{\beta_1} + x_4t^{\beta_4} - y_3 t^{-\beta_3}+y_6t^{-\beta_6}+z_1+z_5)
\phi' 
\bpm y_1+t^{\beta_1} & y_2 
\\ x_3+t^{-\beta_3}
&y_4+t^{\beta_4} \\ x_5 & x_6+t^{-\beta_6}\\ 
x_7  & x_8  \epm,
\end{equation}
where  $\phi'$ is the Schwartz function
obtained by taking the Fourier transform
of $\phi$ (relative to $\psi$) in $x_1, x_2$ and $x_4.$ 
If we define
$\psi_{U_0,t}(u_0):=\psi(x_1t^{\beta_1} + x_4t^{\beta_4} - y_3 t^{-\beta_3}+y_6t^{-\beta_6}+z_1+z_5),$
and we  define 
$\delta(t) \in U_7$ 
and $\pi_7: U_7 \to \Mat_{4\times 2}$ by 
$$
\delta(t)= 
u_Q\left( \bpm0& 0 
\\ t^{-\beta_3}
&0 \\ 0 &t^{-\beta_6}\\ 
0  & 0  \epm
,\bpm  0& 0\\ 0
&0 \\ t^{\beta_4} &   0
\\ 
0  &   t^{\beta_1}  \epm,0
\right)\in U_7, \qquad 
\pi_7(u_7)
=
\bpm y_1 & y_2 
\\ x_3
&y_4 \\ x_5 & x_6\\ 
x_7  & x_8  \epm,
$$
then
\eqref{eq: III(u0 t u7 phi)}
becomes 
$|t ^{\beta_1 + \beta_2 + \beta _4 }|
|\det t|^{\frac 12}\psi_{U_0,t}(u_0) \phi'(\pi_7(\delta(t)^{-1}u_7)).$

The projection $\pi_7$ has a two dimensional kernel 
corresponding to the 
variables $z_4$ and $z_6.$ 
Let $U_8$ denote this kernel 
and choose a subset $U_8'$ of $U_7$ which 
contains $\delta(t)$ and maps  
isomorphically onto the quotient. Then we can 
parametrize $\II( R(t). f, \omega_{\psi, l}(t). \phi, s)$
as a triple integral over $U_0 \times U_8 \times U_8'.$
The $U_8'$ integral is convergent because $\phi'$ is 
Schwartz, after a change of variables it becomes
$$
\phi'*_1f_{\chi;s}(wu_0u_8 \delta(t) ):=
\int_{U_8'} 
f_{\chi;s}(wu_0u_8 \delta(t) u_8') \phi'(\pi_7(u_8')) \, du_8.
$$
Thus, conjugating $t$ from right to left, 
and making a change of variables yields
$I_1(W, f, \phi; s)
= I_2( W, \phi*_1f;s),$
where $I_2( W, f,\phi;s )$ is defined as 
$$ 
\int_{Z\bs T} 
W(t) 
\delta_B^{-1}(t) |\det t|^{\frac 12}
| t^{\beta_1+\beta_2+\beta_4}|
\Jac_1(t) (\chi;s)(wtw^{-1}) 
f_{\chi;s}(wu_0u_8 \delta(t) )\, dt,
$$
with $\Jac_1(t)$ being the ``Jacobian''
of the change of variables $u_0 \to tu_0t^{-1}, \ 
u_7 \to tu_7 t^{-1}.$
Notice that $\phi*_1f$
 is simply another smooth section of the 
same family of induced representations,
and that if $f_{\chi;s}$ and $\phi$ are both 
 unramified, then $\phi' = \phi$ and 
 $\phi'*_1f_{\chi;s}= f_{\chi;s}.$ Thus, we may dispense with 
 the integral over $u_8'.$
 
 Next, we dispense with the integral over $u_8.$ To 
 do this, we use \cite{Dixmier-Malliavin} to replace
 $f_{\chi;s}$ by a sum of sections of the form 
 $$
 \phi_2*_2 f'_{\chi;s}(g) := \int_{F^2} f_{\chi;s}(g x_{24}(y_1)x_{34}(y_2)) \phi_1(y_1,y_2) \, dy,\qquad 
 (s\in \C^2, g\in G).
 $$
 Now, let
 $$[\II_2.f_{\chi;s}](g) :=
 \int_{U_0}f_{\chi;s}(w u_0g)\psi_{U_0,t}(u_0) \, du_0, 
 \qquad u_9(y_1,y_2) := x_{24}(y_1)x_{34}(y_2).$$
  conjugating $u_9(y_1,y_2)$ from 
  right to left shows that 
 $$[\II_2.f_{\chi;s}]( \delta(t) u_8 u_9(y_1,y_2))
 =
 \psi(-y_1z_6 - y_2(z_4-t^{\alpha_1}))
 \cdot [\II_2.f_{\chi;s}](  \delta(t) u_8).$$
 (Recall that $z_6$ and $z_4$ are coordinates on $U_8.$)
 But then 
$$\int_{F^2}[\II_2.\phi_2*_2f_{\chi;s}]( \delta(t) u_8) \, dz 
= \int_{F^2}[\II_2.f_{\chi;s}]( \delta(t) u_8) 
\widehat\phi_2(z_6, z_4-t^{\alpha_1})
\, dz,$$
which we may write as
$[\II_2.\widehat\phi_2*_3f_{\chi;s}](  \widetilde\delta(t)),$
where $*_3$ is the action of $\c S(U_8)$ by convolution, 
and $\widetilde\delta(t)= \delta(t) x_{29}(t^{\alpha_1}).$ 
Notice that $\widehat\phi_2*_3f_{\chi;s}$ is again another 
smooth 
section of the same family of induced representations.
Note also that if $f$ is spherical then taking $\phi_2$
to be the characteristic function of $\f o^2$ 
gives $\widehat\phi_2*_3f_{\chi;s}= f_{\chi;s}.$ 

Thus, we are reduced to the study of the integral
\begin{equation}
\label{eq: def of I3}I_3( W, f; s):=
\int_{Z\bs T} W(t)
\delta_B^{-1/2}(t) \nu_s(t)
 \II_2.f_{\chi; s}( \widetilde\delta(t))\, dt,
  \end{equation}
where $\nu_s(t):=
 \delta_B^{-1/2}(t) |\det t|^{\frac 12}
| t^{\beta_1+\beta_2+\beta_4}|
\Jac_1(t) (\chi;s)(wtw^{-1}).$
\label{section with w1 w2 and s3}
Write $w=
w_1w_2w_3,$ where $w_1=w[634],$ $w_2=w[3236514]$
and $w_3=w[2356243564].$
Write $U$ for the unipotent radical of our standard 
Borel of $G,$ and $U^-$ for the unipotent radical of the 
opposite Borel. For $w \in W,$ let $U_w = U \cap w^{-1} U^- w.$ 
Then $w_3 U_0 w_3^{-1} = U_{w_1w_2}= w_2^{-1}U_{w_1} w_2 U_{w_2}.$  
For ${\bf c}:=(c_1, \dots, c_6) \in F^6,$
define a character $\psi_{{\bf c},0}$ of $U_0$ in terms
of the standard coordinates on $U_0$ by 
$
\psi_{{\bf c},0}(u_0):=
\psi(c_1x_1+c_2x_4+c_3y_6-c_4y_3+c_5z_1+c_6z_5).
$  
Notice that $\psi_{U_0,t}$ is obtained by taking 
${\bf c}=(t^{\beta_1},t^{\beta_4}, t^{-\beta_3}, t^{-\beta_6},1,1).$
In terms of the entries $u_{ij}$ we have
$
\psi_{{\bf c},0}(u) = \psi(c_1u_{15} +c_2 u_{26}+c_3 u_{27}
-c_4 u_{38} + c_5 u_{19} + c_6 u_{2,10}),
$
Now, 
$w_3$ 
corresponds to the permutation 
$(2,4,11,9)(3,8,10,5).$
It follows that 
$u_0'\mapsto \psi_{{\bf c},0}( w_3^{-1} u_0' w_3)$ is the 
character
of $w_3U_0w_3^{-1}$
given by $u\mapsto \psi( c_1u_{13} + c_2 u_{46}+c_3 u_{47}+c_4 u_{35} + c_5 u_{12} + c_6 u_{45}).$
In particular, its restriction to 
$w_2^{-1}U_{w_1} w_2$ 
is trivial.  Let $\psi_{{\bf c},2}$ denote the 
restriction to $U_{w_2}.$  
Then 
\begin{equation}\label{int U0 = int U1 int U2}
\int_{U_0} f_{\chi;s}(wug) \psi_{{\bf c},0}(u) \, du 
= \int_{U_{w_2}} \int_{U_{w_1}} f_{\chi;s}(w_1u_1w_2 u_2w_3g)  \, du_1\,\psi_{{\bf c},2}(u_2)\, du_2,\end{equation}
and the $u_1$ integral is a standard intertwining operator, 
$M(w_1^{-1}, \chi; s):\Ind_P^G(\chi;s)\to \nInd_{B_G}^G ((\chi;s)\delta_{B_G}^{-1/2})^{w_1},$
where  $((\chi;s)\delta_{B}^{-\frac12})^{w_1}(t):= ((\chi;s)\delta_{B}^{-\frac12})(w_1 t w_1^{-1}),$
and $\nInd$ denotes normalized induction.

Now let $w_4=w[32365]$ so that $w_2=w_4w[14].$ 
Also, let $w_3' = w[14]w_3.$
Observe that $w_4$ is the long element of the Weyl 
group of a standard Levi subgroup  of  $GSO_{12}$
which is isomorphic 
to $GL_1 \times GL_3 \times GSO_4.$ 
For $c_1, \dots, c_4 \in F,$ define 
a character $\psi_{{\bf c},4}$ of $U_{w_4}$ by 
$\psi_{{\bf c},4}
(u) = \psi(
c_1u_{23}+c_4u_{34}+c_3u_{56}+c_2u_{57}),$
and for $f_{\chi;s;w_1}\in \nInd_{B_G}^G ((\chi;s)\delta_{B_G}^{-1/2})^{w_1},$
let 
$$
\c J_{\psi_{{\bf c},4}}.f_{\chi;s;w_1} ( g)
= \int_{U_{w_4}} f_{\chi;s,w_1}^\circ(w_4 ug) \psi_{{\bf c},4}(u)\, du,
$$
which is a Jacquet integral for this Levi subgroup. Then \eqref{int U0 = int U1 int U2} equals
$$ \int_{F^2} [\c J_{\psi_{{\bf c},4}}\circ M(w_1^{-1}, \chi;s).f_{\chi;s}](x_{21}(r_1)x_{54}(r_2)w_3'g)\psi(c_5r_1+c_6r_2) \, dr.
$$

\section{Unramified Calculation}\label{section: unramified calculation}
We keep the notation from section \ref{ss: local zeta, defs and notation}, 
and assume further that 
 $F$ is nonarchimedean, 
with ring of integers $\f o$  having unique maximal ideal 
$\f p.$ 
We fix a generator $\f w$ for $\f p.$ 
 The absolute value 
on $F$ is denoted $|\ |$ and
 normalized so that $|\f w|=q:=\# \f o/\f p.$
 The corresponding $\f p$-adic valuation is 
 denoted $v.$
Moreover, we assume that $K=G(\f o)$, and 
that the representation
$\pi$ and characters $\chi_i, \ i=1,2,3$ are 
 unramified, and we let $W_\pi^\circ, f^\circ$ and $\phi^\circ$
 denote the normalized spherical elements
 of 
$\c W_{\psi_N}(\pi), \Flat(\chi)$ and $\c S(\Mat_{4\times 2}),$
respectively.

The (finite Galois form of the) $L$-group of
$GSp_4 \times GSO_4$ is $GSpin_5(\C) \times GSpin_4(\C).$
Indeed, one may define $GSpin_{2n+1}$ (resp. $GSpin_{2n}$) as the 
reductive group with root datum dual to that of $GSp_{2n}$ (resp. $GSO_{2n}$). However, both $GSpin$ groups appearing here can be understood more
explicitly via ``coincidences of low rank.'' 
Indeed, a simple change of $\Z$-basis reveals that the root 
datum of $GSp_4$ is in fact {\it self } dual. Thus $GSpin_5$ is just 
$GSp_4$ in another guise. Note, however, that the isomorphism
of $GSp_4$ with its own dual group does not respect the standard 
numbering of the simple roots.

Next, we can realize $GSO_4$ (resp. $GSpin_4$) as a quotient 
(resp. subgroup) of $GL_2 \times GL_2.$ 
Indeed, we can realize $GSO_4$ as the 
similitude group of 
 the four dimensional quadratic
space $(\Mat_{2\times 2}, \det).$ 
Letting $GL_2\times GL_2$ act by 
$(g_1, g_2) \cdot X = g_1 X  {}^t \!g_2$
induces a surjection $GL_2 \times GL_2 \to 
GSO_4$ with kernel 
$\{ (aI_2, a^{-1}I_2): a \in GL_1\},$ and thence
a bijection between representations of 
$GSO_4$ and pairs of representations
of $GL_2$ with the same central character.  
By duality, this  
  induces an isomorphism of 
 $GSpin_4$ with 
$\{ (g_1, g_2) \in GL_2\times GL_2: \det g_1 = \det g_2\}.$  
We remark that the induced map
$GSpin_4 \to SO_4$ is {\it not} the restriction of 
 our chosen 
map  $GL_2 \times GL_2 \to GSO_4.$  

We regard $GSp_4 \times GSO_4$ as a subgroup of $M_Q$ 
containing $C_Q$ in the obvious way, and  regard its $L$ group 
as a subgroup of $GSp_4 \times GL_2\times GL_2.$ 
We make the identification in such a way that the first $GL_2$ 
corresponds to the fifth simple root of $G=GSO_{12}$
and the second $GL_2$ corresponds to the sixth simple root of $G.$

Let $St_{GSp_4}$ denote the standard representation of $GSp_4.$
It may also be regarded as the spin representation
of $GSpin_5.$ For this reason, the associated $L$ function 
is often called the ``Spinor $L$ function.''
We regard $St_{GSp_4}$ as a representation of of the $L$ group via  
projection   onto the $GSp_4(\C)$ factor, and 
let $St_{GSp_4}^\vee$ denote the 
dual representation. 
Let $St_{GL_2^{(1)}}$ (resp. $St_{GL_2^{(2)}}$)
denote the representations of the $L$ group 
obtained by composing the standard representation of $GL_2$
with projection onto the 
first (resp. second) $GL_2(\C)$ factor.
\begin{thm}\label{thm:  main local}
Let 
 \begin{equation}\label{eq:N(s, chi)}
 \begin{aligned}
N(s, \chi)=&
L(s_1-s_2, \frac{\chi_1}{\chi_2})
L(s_1-s_2-1, \frac{\chi_1}{\chi_2})
L(s_1-s_2-2, \frac{\chi_1}{\chi_2})
L(s_1+s_2-2, \chi_1\chi_2)
\\ &\times 
L(s_1+s_2-3, \chi_1\chi_2)
L(s_1+s_2-4, \chi_1\chi_2)
L(2s_2, \chi_2^2)
L(2s_1-6, \chi_1^2).\end{aligned}\end{equation}
(Local $L$ functions.  The corresponding product of global zeta functions is 
the normalizing factor of the Eisenstein series.)
Then $I(W_\pi^\circ, f^\circ, \phi^\circ; s)$ equals
$$ 
\frac{L(\frac{s_1-s_2}2-1, \pi, St_{GSp_4}^\vee
\otimes St_{GL_2^{(1)}} \times \frac{\chi_3}{\chi_1 \chi_2^2}) 
L(\frac{s_1+s_2}2-2, \pi, St_{GSp_4}^\vee
\otimes St_{GL_2^{(2)}} \times \frac{\chi_3}{\chi_1 \chi_2})}{N(s,\chi)}
$$
\end{thm}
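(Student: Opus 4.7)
The plan is to use the reduction carried out in section \ref{ss: initial computations}. Since $W_\pi^\circ$, $f^\circ$ and $\phi^\circ$ are all $K$-fixed, the integrand in \eqref{eq: I = int of I1 over K} is constant in $k$, and the auxiliary convolutions $\phi' *_1 f^\circ_{\chi;s}$ and $\widehat\phi_2 *_3 f^\circ_{\chi;s}$ collapse back to $f^\circ_{\chi;s}$ (up to an overall normalization); consequently $I(W_\pi^\circ, f^\circ, \phi^\circ; s) = I_3(W_\pi^\circ, f^\circ; s)$, the torus integral \eqref{eq: def of I3}. The first substantive task is to evaluate the ``inner'' factor $[\II_2 . f^\circ_{\chi;s}](\widetilde\delta(t))$. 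Using the factorization $w = w_1 w_2 w_3$ from section \ref{section with w1 w2 and s3}, this inner factor splits as the composition of the standard intertwining operator $M(w_1^{-1}, \chi; s)$ with a Jacquet-type integral against $\psi_{\mathbf{c},4}$ for the Levi isomorphic to $GL_1 \times GL_3 \times GSO_4$. Gindikin-Karpelevich evaluates the intertwiner on a spherical vector as a ratio of local $L$-factors, producing exactly the $L(s_1 - s_2 - k, \chi_1/\chi_2)$ and $L(s_1 + s_2 - k, \chi_1 \chi_2)$ factors appearing in \eqref{eq:N(s, chi)}, and the Jacquet integral applied to the image (which is spherical in a principal series of $GL_1 \times GL_3 \times GSO_4$) is then computed by Casselman-Shalika.

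Next I would expand $W^\circ_\pi(t)$ via the Casselman-Shalika formula as a sum over dominant coweights of $T$, with each term weighted by the character of an irreducible of the dual group of $C_Q$. Because $C_Q^\vee$ is (a quotient of) $GSp_4(\C) \times \{(g_1, g_2) \in GL_2(\C)^2 : \det g_1 = \det g_2\}$, these irreducibles are indexed, up to central character, by highest weights of $Sp_4 \times SL_2 \times SL_2$. Substituting both expansions into $I_3$ and performing the torus integral (which detects dominance conditions on the sum indices by unramified support), the integral collapses to a fourfold sum whose summand is a character $\Tr(V_{(n_1, n_2; n_3; n_4)})$ of $Sp_4 \times SL_2 \times SL_2$ evaluated at the Satake parameters of $\pi$, multiplied by monomials in $q^{-s_1}, q^{-s_2}$ and the Satake data of $\chi_1, \chi_2, \chi_3$. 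Collecting the monomials coming from $\widetilde\delta(t)$, from $\psi_{\mathbf{c},4}$, and from the factor $\delta_B^{-1/2}\nu_s(t)$ is where most of the combinatorial bookkeeping lives.

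The final step, which I expect to be the main obstacle, is to identify the resulting multivariate series with the claimed ratio of $L$-functions. This is exactly where section \ref{sec: prep for unram} is designed to enter: proposition \ref{pro: decomp of V times V'} gives the closed form for the generating series of $\sym^i V_{(1,0;1;0)} \otimes \sym^j V_{(1,0;0;1)}$, and the two $SL_2$ factors match the two $GL_2(\C)$ factors of the $L$-group of $GSO_4$. After the correct change of variables, the denominator $\delta(x, y, t)$ of proposition \ref{pro: decomp of V times V'} combines with the Gindikin-Karpelevich contribution from $M(w_1^{-1}, \chi; s)$ to assemble into $N(s, \chi)$, while the numerator $\nu(x, y, t)$ rearranges (using that tensoring $St_{GSp_4}^\vee$ with the standard representation of each $GL_2$ factor produces a six-dimensional representation whose symmetric algebra is precisely what $\nu/\delta$ computes) into the product $L(\tfrac{s_1-s_2}{2}-1, \pi, St_{GSp_4}^\vee \otimes St_{GL_2^{(1)}} \times \tfrac{\chi_3}{\chi_1 \chi_2^2}) \cdot L(\tfrac{s_1+s_2}{2}-2, \pi, St_{GSp_4}^\vee \otimes St_{GL_2^{(2)}} \times \tfrac{\chi_3}{\chi_1 \chi_2})$. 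The theorem then follows modulo this algebraic identification, which is essentially an exercise in matching rational functions of several variables once the variable dictionary between $(s_1, s_2)$, Satake parameters, and the formal variables $x, y, t_i$ is fixed.
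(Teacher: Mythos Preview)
Your outline matches the paper's approach, but several steps that you treat as routine are in fact where the work lies, and one preliminary reduction is missing entirely.

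First, the paper begins with a reduction to trivial characters: by writing $\chi_i = |\cdot|^{r_i}$ and twisting $\pi$ by a power of the similitude character, both sides of the identity shift in the same way, so it suffices to treat $\chi$ trivial and $\pi$ with trivial central character (hence a representation of $SO_5 \times PGL_2 \times PGL_2$). You omit this, and consequently your bookkeeping for the $\chi_i$-dependence is never pinned down.

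Second, your claim that Gindikin--Karpelevich for $M(w_1^{-1},\chi;s)$ ``produces exactly the $L(s_1-s_2-k,\chi_1/\chi_2)$ and $L(s_1+s_2-k,\chi_1\chi_2)$ factors appearing in $N(s,\chi)$'' is not accurate. The intertwiner contributes only three ratios; the Jacquet integral on the $GL_1\times GL_3\times GSO_4$ Levi contributes another batch; and the remaining factors, in particular $L(2s_1-6,\chi_1^2)$, emerge only at the very end from the rational-function identity (they sit in the factor $Z_3(x,y)=(1-x^2y^2)(1-x^2y^4)(1-x^4y^6)$). The assembly of $N(s,\chi)$ is therefore more delicate than your sketch suggests.

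Third, and most seriously, the evaluation of $[\II_2.f^\circ_s](\widetilde\delta(t))$ is not a single Casselman--Shalika computation. One must first compute the Iwasawa decomposition of $w_3\widetilde\delta(t)$, which depends on which of $|a|,|b|,|c|,|d|$ exceed $1$; this yields a \emph{seven-case} piecewise formula for $\II_2.f^\circ_s(\widetilde\delta(t))$ in terms of an auxiliary function $\mathbf I_1(s;c_1,\dots,c_6)$. A separate lemma then expresses $\mathbf I_1$ as a signed sum of four Jacquet values $J_{c_1,\dots,c_4}$, each of which is computed by Casselman--Shalika for the Levi. Your phrase ``the torus integral detects dominance conditions by unramified support'' does not capture this: the support of $W^\circ_\pi$ alone does not force $|a|,|b|,|c|,|d|\le 1$ individually, only the products $|ad|,|bc|,|bd|,|cd|$, so all seven regions genuinely contribute.

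Finally, the identification with proposition~\ref{pro: decomp of V times V'} is not a variable-matching exercise: after introducing the piecewise function $ii(m)$ encoding the seven cases, the paper reduces to a power-series identity over the representation ring of $Sp_4\times SL_2\times SL_2$, and verifies it by summing each of the seven subsums into an explicit rational function and checking (with a computer algebra system) that the total equals $Z_4(x,y)\,\nu(x,y,t)/\delta(x,y,t)$. Your sketch would need to incorporate this piecewise structure before the comparison with $\nu/\delta$ can even be formulated.
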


\begin{proof}[Reduction of the general case
to the special case of trivial  
characters]
For purposes of this 
proof,  write $\lambda_H$ for the similitude 
rational character of $H$ where
$H = GSO_{12}, GSp_4,$ or $GSO_4.$ 
Our embedding 
$(GSp_4 \times GSO_4)^\circ \hookrightarrow 
GSO_{12}$  is such that 
$
\lambda_{GSO_{12}}( g, h) = \lambda_{GSp_4}(g)^{-1}= \lambda_{GSO_4}(h), 
$
and the projection 
$p: GL_2 \times GL_2 \to GSO_4$ is such that 
$\lambda_{GSO_4}(p(g_1, g_2)) = \det g_1 \det g_2.$

Write $\pi = \Pi \otimes \tau_1\otimes \tau_2$ 
where $\Pi$ is an unramified representation 
of $GSp_4$ and $\tau_1$ and $\tau_2$ are
unramified representations of $GL_2$ with 
the same central character (so that $\tau_1 \otimes 
\tau_2$ is a representation of $GSO_4$). 
Write 
$\tau_i = \tau_{i,0}\otimes |\det |^{t_1}$ where 
 $\tau_{i,0}$ is an unramified representation 
 of $GL_2$ with trivial central character 
 for $i=1,2$, 
 and $t_1$ is a complex number, 
 and write 
$\Pi = \Pi_0 \otimes |\lambda_{GSp_4}|^{t_2},$
where $\Pi_0$ is an unramified representation of 
$GSp_4$ with trivial central character
and $t_2$ is a complex number. 
Then, as representations of $C_Q=(GSp_4 \times GSO_4)^\circ,$ 
$$
\pi =\pi_0\otimes |\lambda_{GSO_{12}}|^{t_1-t_2}, 
\qquad \text{ where }
\pi_0= \Pi_0 \otimes \tau_{1,0}\otimes\tau_{2,0}.
$$
The operation of twisting 
$\Pi_0$ by $|\lambda_{GSp_4}|^{t_2}$ 
to obtain $\Pi$ corresponds, on the 
$L$ group side, to multiplying the corresponding
semisimple conjugacy class in $GSp_4(\C)$
by $q^{-t_3}I_4.$ Likewise, 
the operation of twisting 
$\tau_{i,0}$ by $|\det |^{t_1}$ 
corresponds to multiplying by $q^{-t_1}$ in $GL_2(\C)$ for $i=1,2.$  
If $\eta$ is the unramified character $\eta(a) = |a|^r$, then 
$$L\left(u, \pi, St^\vee_{GSp_4} \otimes St_{GL_2^{(i)}}
\times 
 \eta
\right)
= L\left(u-t_2+t_1+r, \pi_0, St^\vee_{GSp_4} \otimes St_{GL_2^{(i)}}\right).$$

For $i=1,2,3$ the unramified character
$\chi_i$ is given by $|\ |^{r_i}$ 
for some $r_i \in \C.$ 
If $s = (s_1, s_2, \frac{3(s_1+s_2)}2) \in \C^3,$ then 
let $s' = (s_1+r_1, s_2+r_2,  \frac{3(s_1+r_1+s_2+r_2)}2),$
and let $\chi_0$ be the triple consisting of three copies of the 
trivial character.
 Then it follows directly from the definitions that 
$f^\circ_{\chi; s} = f^\circ_{\chi_0; s'} \cdot |\lambda_{GSO_{12}}|^{r_3 - \frac{3r_1+3r_2}2}.$ The general case now follows from the case $t_1=t_2=r_1=r_2=r_3=0.$

Recall that $I(W, f, \phi; s)$ is only defined when the triple $\chi$ and the central character of $\pi$ are compatible.
In the present notation the compatibility condition 
 is that 
$-3r_1-3r_2 +2 r_3 + 2( t_1-t_2) = 0.$
But then $W_\pi \cdot f^\circ_{\chi; s} =W_{\pi_0}\cdot f^\circ_{\chi_0; s'}.$
The general case now reduces to the special case 
when $\chi = \chi_0$ and $\pi=\pi_0.$
\end{proof}
\begin{rem}
We may now  assume
 that 
 $\pi = \Pi \otimes \tau_1 \otimes \tau_2,$ 
 where 
 the central characters of $\Pi, \tau_1$ and
 $\tau_2$ are all trivial. Thus, $\pi$ 
 may be regarded 
 as an unramified representation of  $SO_5 \times PGL_2 \times PGL_2$  
and corresponds to a  semisimple 
conjugacy class in 
$Sp_4(\C) \times SL_2(\C) \times SL_2(\C).$
Define $St_{Sp_4}, St_{SL_2^{(1)}}$ and 
$St_{SL_2^{(2)}}$ as the restrictions of 
$St_{GSp_4}, St_{GL_2^{(1)}}$ and $St_{GL_2^{(2)}}$, respectively.  Note that all three are
self dual representations.  In particular, 
$L(u, \pi, St_{GSp_4}^\vee \otimes St_{GL_2^{(i)}})
= L(u, \pi, St_{Sp_4} \otimes St_{SL_2^{(i)}})$
for all $u \in \C.$
\end{rem}
\begin{proof}[Proof in the special case of trivial characters]
As $\chi$ is trivial we  write  $f^\circ_s$ instead 
of $f^\circ_{\chi; s}.$  Let $I(s, \pi):= I(W_\pi^\circ, f^\circ, \phi; s).$
As we've seen in section \ref{ss: initial computations},
it is equal to $I_3(W_\pi^\circ, f^\circ; s),$ defined by 
\eqref{eq: def of I3}.
 Let ${\bf I}_1(s; c_1, \dots, c_6)$ equal
$$ \int_{U_0}
f_{\chi;s}(wu_0
)
\psi_{{\bf c},0}(u_0) 
 \,du_0.
$$
Let $a=t^{\beta_1}, b=t^{\beta_4}, c=t^{-\beta_6},
d=t^{-\beta_3}.$
Note that  
$\beta_1 - \beta_3 = \alpha_1,$
$\beta_4-\beta_3 = \alpha_5,$
and $\beta_4-\beta_6= \alpha_2.$
Moreover, the characters $-\beta_3-\beta_6$ 
and $\alpha_6$ are not identical on 
$T_G,$ but they 
have the same restriction 
to the maximal torus $T$ of $C_Q.$
It follows that 
for $t$ in the support of $W_\pi^\circ,$ 
the quantities 
$|ad|, |bd|, |bc|$ and $|cd|$ are all $\le 1.$

By plugging in the Iwasawa decomposition
for $w_3\widetilde \delta(t),$ 
we find that 
\begin{equation} 
\label{pro:3.1} \II_2.f^\circ_s(\wt \delta(t))=
 \begin{cases}
{\bf I}_1(s;a,b,c,d,1,1), 
& |a|,|b|,|c|,|d| \le 1,\\
|d|^{-s_1-s_2+4}{\bf I}_1( s, ad, bd, cd, 1,1,0), & |d| > 1, |a|,|b|,|c|\le 1,\\
|c|^{-s_1-s_2+4}{\bf I}_1(s, a, bc, 1, cd, 1, 0), & |c|>1, |a|, |b|,|d| \le 1,\\
|a|^{-s_1+s_2+2}|c|^{-s_1-s_2+4}{\bf I}_1(s, 1, bc, 1, acd, 0,0), &|a|,|c|>1, |b|,|d| \le 1,\\
|a|^{-s_1+s_2+2}{\bf I}_1(s, 1, b,c,ad, 0,1), & |b|,|c|,|d|\le 1, |a|>1,\\
|a|^{-s_1+s_2+2}|b|^{-s_1+s_2+2}{\bf I}_1( s, 1,1,bc, abd, 0,0), & |c|,|d|\le 1, |a|,|b| >1,\\
|b|^{-s_1+s_2+2}{\bf I}_1(s, a,1,bc,bd,1,0), & |d|,|c|,|a| \le 1, |b| > 1.
\end{cases}
\end{equation}
Now let $f_{s,w_1}^\circ$ denote the normalized spherical vector 
in $\nInd_{B_G}^G((\chi_0;s)\delta_{B}^{-\frac12})^{w_1},$
and let 
 $$Z_1(s):=
 \frac{\zeta(2s_2-1)}{\zeta(2s_2)}
\frac{\zeta(s_1-s_2-1)}{\zeta(s_1-s_2)}
\frac{\zeta(s_1+s_2-3)}{\zeta(s_1+s_2-2)}$$
 Then, $M(w_1^{-1}, \chi; s).f_s^\circ 
= Z_1(s)
 f_{s,w_1}^\circ,$
 by  the Gindikin-Karpalevic formula,
 and hence 
 $$
{\bf I}_1(s; c_1, \dots, c_6) 
= Z_1(s)
\int_{F^2}
\c J_{\psi_{{\bf c},4}} f_{s,w_1}^\circ(  x_{21}(r_1) x_{54}(r_2) ) \psi(c_5r_1+c_6r_2)
\,dr.$$

We remark that 
 $((\chi_0;s)\delta_{B}^{-\frac12})^{w_1}$ maps 
$\diag( \lambda t_1, \dots, \lambda t_6, t_6^{-1}, \dots, t_1^{-1})\in T_G$
 to 
\begin{equation}\label{chi s prime}
|t_1|^{s_1-5}|t_2|^{s_1-4}|t_3|^{s_2-2}|t_4|^{-s_2}|t_5|^{s_1-3}
|t_6|^{1-s_2}|\lambda|^{s_3-2s_2-\frac{13}2}.
\end{equation}
 
\begin{lem}\label{lem: i1 in terms of j}
Assume that each of $c_5,c_6$ is either zero or a unit.
Assume further that 
if $c_6 =0$ then at least one of $c_2, c_3, c_4$ is a unit, 
and that if $c_5=0$ then $c_1$ is a unit, 
and set $J_{c_1,c_2, c_3, c_4}= \c J_{\psi_{{\bf c},4}} f_{s,w_1}^\circ( I_{12}).$
Then ${\bf I}_1(s; c_1, \dots, c_6)/Z_1(s)$ equals
$$
J_{c_1,c_2, c_3, c_4}-q^{-s_1-s_2+4}J_{ \frac{c_1}{\f w}, c_2, c_3, c_4}
- q^{-2s_1+6}J_{c_1,\frac{c_2}{\f w}, \frac{c_3}{\f w}, \frac{c_4}{\f w}}
+q^{-3s_1-s_2+10}
J_{\frac{c_1}{\f w},\frac{c_2}{\f w}, \frac{c_3}{\f w}, \frac{c_4}{\f w}}.
$$
\end{lem}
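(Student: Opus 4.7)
The integral to be evaluated is
\begin{equation*}
\int_{F^2} \Phi(x_{21}(r_1) x_{54}(r_2)) \psi(c_5 r_1 + c_6 r_2) \, dr_1 \, dr_2,
\end{equation*}
where $\Phi := \c J_{\psi_{{\bf c},4}} f^\circ_{s,w_1}.$ My plan is to partition $F^2$ into four pieces according to whether $|r_i| \le 1$ or $|r_i| > 1$ for $i=1,2$, compute each piece separately, and sum. The key structural observations are: (i) $x_{21}(r_1)$ and $x_{54}(r_2)$ commute with each other and with every element of $U_{w_4}$, since $-\alpha_1+\beta$ and $-\alpha_4+\beta$ fail to be roots for every positive root $\beta$ of the Levi $L = GL_1\times GL_3\times GSO_4;$ (ii) for $|r_i|\le 1$ both elements lie in $K,$ so right-$K$-invariance of $\Phi$ reduces the integrand on this compact piece to $\Phi(I) = J_{c_1, c_2, c_3, c_4},$ and the hypothesis on $c_5, c_6$ makes the $\psi$-integral equal to $1;$ (iii) for $|r_i|>1,$ conjugation by $w_4$ moves $x_{21}(r_1)$ to $x_{41}(\pm r_1)$ (using $w_4\alpha_1 = e_1-e_4$), with an analogous statement for $x_{54}(r_2),$ and the standard $SL_2$-Iwasawa decomposition of these conjugated elements produces a torus element at which the explicit character formula \eqref{chi s prime} together with $\delta_B^{1/2}$ can be evaluated in closed form.

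In each of the three non-trivial regions, the right-$K$ factor of the Iwasawa further decomposes as $\tilde s\cdot u$ for a Weyl representative $\tilde s$ and an upper unipotent $u\in U\cap K.$ After moving the Weyl piece through to combine with $w_4$ and absorbing the unipotent commutators with $U_{w_4}$, which land in the adjacent root subgroups $U_{e_1-e_3} U_{e_1-e_4}$ (resp.\ the analogous pair for $r_2$), a subsequent conjugation by the Iwasawa torus rescales all coordinates of these commutator pieces to be independent of $r_i.$ The Jacobian of this torus conjugation on $U_{w_4}$ and the character shift that it produces combine with the value of $f^\circ_{s,w_1}$ on the torus element to give an overall factor $|r_1|^{-s_1-s_2+4}$ (respectively $|r_2|^{-2s_1+6}$) and to modify the character $\psi_{{\bf c},4}$ by $c_1\mapsto c_1 r_1$ (respectively $(c_2,c_3,c_4)\mapsto(c_2 r_2, c_3 r_2, c_4 r_2)$), so that $\Phi$ reduces to another Jacquet integral of the form $J_{\cdot,\cdot,\cdot,\cdot}.$

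The Fourier integrals $\int \psi(c_i r_i)\,dr_i$ are now evaluated using the standard identity $\int_{|r|=q^k}\psi(cr)\,dr=-1$ for $k=1$ and a unit $c,$ vanishing for $k\ge 2.$ This localizes each $|r_i|>1$ contribution to the single shell $|r_i|=q,$ at which point $c_1 r_1$ (resp.\ $c_j r_2$) becomes $c_1/\varpi$ (resp.\ $c_j/\varpi$) up to a unit that is absorbed by the remaining unit integration. When $c_5=0$ or $c_6=0,$ the hypothesis that $c_1$ (resp.\ some $c_j\in\{c_2,c_3,c_4\}$) is a unit is used instead: the Casselman--Shalika support condition on the spherical Whittaker function attached to the Levi $L$ forces $J$ to vanish at arguments of valuation below $-1,$ so the integration once more telescopes to the single shell. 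Summing Regions I--IV yields the four terms of the lemma, with the signs from $\int_{|r|=q}\psi(cr)\,dr=-1$ and the explicit powers of $q$ from the evaluation of the Iwasawa factors $|r_1|^{-s_1-s_2+4}$ and $|r_2|^{-2s_1+6}$ at $|r_i|=q.$

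The main obstacle is the bookkeeping in step (iii): it must be verified that every $r_i$-dependence produced by the commutators $[u_{12}(r_1^{-1}), U_{w_4}]$ and $[u_{45}(r_2^{-1}), U_{w_4}]$ is cancelled exactly by the rescaling effect of the Iwasawa torus conjugation, so that the Jacquet integral reduces cleanly to a single $J_{c_1',c_2',c_3',c_4'}$ with the shifts described above; this uses the crucial identity $\beta(h^{-1})\cdot r_i^{-1} = \pm 1$ (with $\beta$ the relevant commutator root and $h$ the Iwasawa torus), which holds precisely because the Iwasawa torus eigenvalue is $\pm r_i^{-1}.$
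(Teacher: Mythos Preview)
Your approach is essentially the same as the paper's: partition according to $|r_i|\le 1$ versus $|r_i|>1$, use Iwasawa on the large pieces, and evaluate the resulting Fourier integrals shell by shell.  The paper, however, bypasses the commutator bookkeeping you flag as ``the main obstacle'' by a one-line cocharacter argument: there exist $h_1,h_2:GL_1\to T_G$ with $\langle h_i,\alpha_j\rangle=\delta_{ij}$ for $i\in\{1,4\}$; since each $h_i(a)$ (for $a$ a unit) lies in $K$, commutes with $U_{w_4}$, and is fixed by $w_4$, one gets immediately that $\c J_{\psi_{{\bf c},4}} f_{s,w_1}^\circ(x_{21}(r_1)x_{54}(r_2))$ depends only on $v(r_1),v(r_2)$.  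This reduces the Iwasawa step to the single evaluation at $r_i=\f w^{-1}$, and the standard shell identities $\int_{v(r)=-k}\psi(cr)\,dr=-\delta_{k,1}$ (for $c$ a unit) then give the four terms directly --- no tracking of $r_i$-dependent commutators is needed.

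Your treatment of the degenerate cases $c_5=0$ or $c_6=0$ is slightly off.  You write that Casselman--Shalika forces $J$ to vanish ``at arguments of valuation below $-1$'' and that the integration ``telescopes to the single shell.''  In fact $J_{c_1',\dots}$ already vanishes whenever any $c_i'$ has \emph{negative} valuation (not just $\le -2$), so when $c_5=0$ and $c_1$ is a unit the entire region $|r_1|>1$ contributes \emph{zero}, not a single-shell term; correspondingly the two terms in the stated formula involving $c_1/\f w$ are themselves zero.  The paper phrases this as a support statement: when $c_1$ is a unit the spherical Whittaker function on the Levi is supported in $U T_1 K$ with $T_1=\{|t^{\alpha_2}|\le 1\}$, and the Iwasawa torus of $x_{21}(r_1)$ has $|t^{\alpha_2}|=|r_1|$, forcing $r_1\in\f o$.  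Either formulation yields the same final formula, but your ``single shell'' description would give a spurious volume factor $(q-1)$ rather than $-1$ were the corresponding $J$ not already zero.
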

\begin{rem}
Observe that all the sextuples $c_1, \dots, c_6$ appearing 
in  \eqref{pro:3.1} satisfy the conditions of lemma 
\ref{lem: i1 in terms of j}.
\end{rem}
\begin{proof}
There exist cocharacters $h_i: GL_1 \to T_G,\; (i=1,2)$ such that 
$\la h_1, \alpha_i\ra = \delta_{i,1}$
and $\la h_2, \alpha_i \ra = \delta_{i,4}$ (Kronecker $\delta$). 
It follows that $
 \c J_{\psi_{{\bf c},4}} f_{s,w_1}^\circ(   x_{21}(r_1) x_{54}(r_2) )$
 depends only on $v(r_1)$ and $v(r_2).$ 
If $c_5$ is a unit, then 
$$
\int_{v(r_1) = -k}
\psi(c_5 r_1) \, dr_1 = \begin{cases}
-1, & k =1,\\ 0, & k >1,
\end{cases}
$$
and similarly with $r_2.$ 
Since both $f_{s,w_1}^\circ$ and $\psi$ are 
unramified,
it follows that ${\bf I}_1(s; c_1, \dots, c_6)$ equals
$$\c J_{\psi_{{\bf c},4}} f_{s,w_1}^\circ(I_{12})- \c J_{\psi_{{\bf c},4}} f_{s,w_1}^\circ(\f w^{-1}))-  \c J_{\psi_{{\bf c},4}} f_{s,w_1}^\circ(\f w^{-1}))+ \c J_{\psi_{{\bf c},4}} f_{s,w_1}^\circ( x_{21}(\f w^{-1})x_{54}(\f w^{-1})).$$
Plugging in the Iwasawa decomposition of $x_{21}(\f w^{-1})$
and/or $x_{54}(\f w^{-1})$ gives the result in this case. 

Now suppose that $c_5$ is not a unit.  Then it is zero and $c_1$
is a unit.  It follows that 
$$
\int_{F^2}
\c J_{\psi_{{\bf c},4}} f_{s,w_1}^\circ( x_{21}(r_1) x_{54}(r_2) ) \psi(c_5r_1+c_6r_2)
\,dr\\
=\int_{F}
\c J_{\psi_{{\bf c},4}} f_{s,w_1}^\circ( x_{54}(r_2) ) \psi(c_6r_2)
\,dr_2.
$$ 
Indeed the support of $J$ is contained in $UT_1K$ where
$K=GSO_{12}(\f o)$ is the maximal compact subgroup, 
 and 
$T_1$ is the set of torus elements $t$ with $|t^{\alpha_2}|\le 1.$
It follows easily from the 
Iwasawa decomposition that 
 $x_{21}(r_1) x_{54}(r_2)\in UT_1K$ if and only if $r_1 \in \f o.$
If $c_6$ is a unit then proceeding as before we obtain 
$$\int_{F}
\c J_{\psi_{{\bf c},4}} f_{s,w_1}^\circ(  x_{54}(r_2) ) \psi(c_6r_2)
\,dr_2=J_{c_1,c_2, c_3, c_4}
- q^{-2s_1+6}J_{c_1,\frac{c_2}{\f w}, \frac{c_3}{\f w}, \frac{c_3}{\f w}}.
$$
On the other hand, when $c_1$ is a unit then 
$J_{ \frac{c_1}{\f w}, c_2, c_3, c_4}
=
J_{\frac{c_1}{\f w},\frac{c_2}{\f w}, \frac{c_3}{\f w}, \frac{c_4}{\f w}}=0,$
and the stated result follows in this case as well.
Likewise, if $c_6$ is zero, then integration over
$r_2$ can be omitted and  $J_{c_1,\frac{c_2}{\f w}, \frac{c_3}{\f w}, \frac{c_4}{\f w}}=
J_{\frac{c_1}{\f w},\frac{c_2}{\f w}, \frac{c_3}{\f w}, \frac{c_4}{\f w}}=0,$  which gives the result in the 
remaining two cases.
\end{proof}

Now consider the subgroup of the torus 
consisting of all elements of the form 
\begin{equation}
\label{new torus coordinates}
t=\diag(
t_1t_2, t_2, 1, t_1^{-1}, 
t_3t_4, t_4, \frac{t_2}{t_4}, \frac{t_2}{t_3t_4}, 
t_1t_2, t_2, 1, t_1^{-1}).
\end{equation}
This subgroup maps isomorphically 
onto $Z\bs T.$ 
For elements of this torus and with coordinates as in 
\eqref{new torus coordinates}
we have
$$
\on{Jac}_1(t) 
=|t_1^2t_3t_4^2|^{-1}, 
\qquad 
\on{Jac}_2(t)
= \left|\frac{t_1^2 t_2^3}{t_3t_4^3}\right|\qquad 
\delta_B^{-\frac12}
=|t_1^2 t_2t_3t_4|^{-1} 
\qquad
|\det t|^{\frac 12} = |t_2^{-2} t_3^2 t_4^4|.
$$
$$
(\chi_0;s)( wtw^{-1}) 
= |t_1|^{s_1-s_2} |t_2|^{-s_1-3s_2+s_3} |t_3t_4^2|^{s_2}.
$$
Set $x = q^{-( \frac{s_1-3s_2}2)},$ $ y = q^{-s_2+1},$
and let $n_i$ be the $\f p=v(t_i)$
for $1 \le i \le 4.$  Then 

\begin{equation}
  \label{eq:11}
  \nu_s(t)
= x^{2n_1+n_2}y^{2n_1+n_3+2n_4}.
\end{equation}
For $l=(l_1, l_2, l_3, l_4)\in \Z^4,$ set
$$j_1(l) = 1 - x^{2 l_4+2}y^{ 2 l_4+2}-y^{2 l_1+2} - x^{2 l_1+2 l_4+4}y^{ 4 l_1+4 l_4+8} + x^{2 l_4+2}y^{2 l_1+4 l_4+6}+x^{2 l_1+2 l_4+4}y^{ 4 l_1+2 l_4+6}$$
$$
j_2(l ) = 1 - x^{2 l_2+2}y^{ 4 l_2+4}
\qquad
j_3(l) = 1 -  x^{2 l_3+2}y^{ 2 l_3+2}
$$
$$j(l ) = 
\begin{cases}j_1(l) j_2(l) j_3(l), & 
	l_i \ge 0 \forall i, \\  0, & \text{ otherwise.}\end{cases} 
$$
Then direct computation or the Casselman-Shalika formula shows that 
$$
\c J_{\psi_{{\bf c},4}}.f^\circ_{s,w_1}(I_{12})
= 
\frac{\zeta(s_1+s_2-4)^2}{\zeta(s_1+s_2-3)^2}
\frac{\zeta(s_1-s_2-2)^2}{\zeta(s_1-s_2-1)^2}
\frac{\zeta(2s_2-2)}{\zeta(2s_2-1)}j(l),
$$
where $l_i=v(c_i)$ for each $i.$
Hence, if 
$$i_1(l ) = j(l) -  x^{2}y^{ 4} j(l -(1,0,0,0)) -  x^{4}y^{ 6} j(l-(0,1,1,1)) + x^{6}y^{ 10} j(l-(1,1,1,1)),$$
then 
$${\bf I}_1(s; c_1, \dots, c_6) =\frac{\zeta(s_1-s_2-2)^2\zeta(s_1+s_2-4)^2\zeta(2s_2-2)i_1(n_1,n_2, n_3,n_4)}{\zeta(s_1-s_2)\zeta(s_1-s_2-1)\zeta(s_1+s_2-2)\zeta(s_1+s_2-3)\zeta(2s_2)},
$$
for all $c_1, \dots, c_6$ satisfying the conditions of lemma \ref{lem: i1 in terms of j}.
Consequently, $$
\II_2.f_{s}^\circ(\widetilde\delta(t))
= \frac{\zeta(s_1-s_2-2)^2\zeta(s_1+s_2-4)^2\zeta(2s_2-2)i(A,B,C,D)}{\zeta(s_1-s_2)\zeta(s_1-s_2-1)\zeta(s_1+s_2-2)\zeta(s_1+s_2-3)\zeta(2s_2)},
$$
where $A=v(t^{\beta_{1}}$), $B=v(t^{\beta_{4}})$, $C=v(t^{-\beta_{6}})$, and $D=v(t^{-\beta_{3}})$
and $i$ is defined piecewise in terms of $i_1$
according to the seven cases from  \eqref{pro:3.1}.
It is convenient to use an alternate parametrization. 
Let $ii(m_1, m_2, m_3, m_4)$ equal 
\begin{equation}\label{ii in terms of i}
i(m_1- \frac{-m_2+m_3+m_4}2, 
           \frac{m_2+m_3-m_4}2,
           \frac{m_2-m_3+m_4}2,
           \frac{-m_2+m_3+m_4}2)x^{2m_1+m_2}y^{2m_1+m_2+m_4}           \end{equation} if  $m_2+m_3+m_4$ is even and zero otherwise.
           Then $$\begin{aligned}
\nu_s(t) \II_2.f_s^\circ(\widetilde\delta(t))
&= \frac{\zeta(s_1-s_2-2)^2\zeta(s_1+s_2-4)^2\zeta(2s_2-2)ii(m)}{\zeta(s_1-s_2)\zeta(s_1-s_2-1)\zeta(s_1+s_2-2)\zeta(s_1+s_2-3)\zeta(2s_2)},
\end{aligned}
$$
where now $m_i=v(t^{\alpha_i})$
for $i=1,2,3,4.$            
Let $[m_1, m_2; m_3; m_4]$ or $[m]$ denote the 
trace of the irreducible representation of $^L(C_Q/Z):=Sp_4\times SL_2\times SL_2$
on which $Sp_4$ acts with highest weight $m_1\varpi_1+ m_2 \varpi_2,$ the first $SL_2$ acts with highest weight $m_3,$
and the second $SL_2$ acts with highest weight $m_4.$
Then 
$$
\delta_B^{-\frac12}(t)W_\pi(t) = [m_2, m_1; m_3; m_4](\tau_\pi),
$$
where $\tau_\pi$ is the semisimple conjugacy class 
in $^L(C_Q/Z)$ 
attached to $\pi.$
Note the reversal of order between $1$ and $2.$ 
The reason for this is that when $GSp_4$
is identified with its own dual group, 
the standard numberings for the two dual $GSp_4$'s are 
opposite to one another.  For example the coroot attached to 
the short simple root $\alpha_1$ is the long simple coroot, 
which makes it the long simple root of the dual group.

Now, let $Z_2(x,y)= (1-y^2)(1-x^2y^2)^2(1-x^2y^4)^2$. Then $j(n)$ is divisible by $Z$ for any $n.$
Also, for $\varrho$ the character of a finite dimensional 
representations of $^L(C_Q/Z),$ let  
$
L(u, \varrho ) = \sum_{i=0}^\infty 
u^k \Tr \sym^k(\varrho),$ 
Then theorem \ref{thm:  main local} is reduced to the following 
identity of power series over representation ring of $^L(C_Q/Z):$ 
\begin{equation}\label{id of power series over rep ring}
\frac 1{Z_2(x,y)} \sum_{m \in \Z_{\ge 0}^4}ii(m_2, m_1, m_3, m_4) [m]=Z_3(x,y) L(xy, [1,0;1;0])L(xy^2,[1,0;0;1]),
\end{equation}
where 
$Z_3(x,y)=(1-x^2y^2)(1-x^2y^4)(1-x^4y^6).$

Define polynomials $P_{m}(u,v)$ by 
$$
L(u, [1,0;1;0])L(v,[1,0;0;1])
=\sum_{m \in \Z_{\ge 0}^4}
P_m(u,v) [m].
$$
Then 
\eqref{id of power series over rep ring}
is equivalent to the family of  identities of polynomials, 
$$ (1-y^2)(1-x^2y^2)^3(1-x^2y^4)^3(1-x^4y^6)P_m(xy,xy^2) = ii(m_2, m_1, m_3, m_4) \; (\forall m \in \Z_{\ge 0}^4),$$
or to the identity of power series over a polynomial ring: 
\begin{equation}\label{power series identity}
\begin{aligned} \sum_{m \in \Z_{\ge 0}^4} ii(m_2, m_1, m_3, m_4)
 t_1^{m_1} t_2^{m_2} t_3^{m_3} t_4^{m_4}
&= Z_4(x,y)
\sum_{m \in \Z_{\ge 0}^4}
P_m(xy, xy^2) t_1^{m_1} t_2^{m_2} t_3^{m_3} t_4^{m_4}\\&=
 Z_4(x,y)
\frac{\nu(x,y,t)}{\delta(x,y,t)},
\end{aligned}
\end{equation}
where $\nu$ and $\delta$ are defined as in proposition 
\ref{pro: decomp of V times V'}, and $Z_4=Z_2Z_3.$ 

The identity \eqref{power series identity} can be proved as follows. 
Let $X = (X_1, X_2,X_3,X_4)$ and $Y= (Y_1, Y_2, Y_3, Y_4)$
be quadruples of indeterminates. Define polynomials, 
$$
\b j_1(x,y,X,Y) := 1 - x^2 y^2 X_4^2 Y_4^2 - y^2 Y_1^2 - x^2y^8  X_1^2 X_4^2 
Y_1^4
Y_4^4 + x^2 y^6   X_4^2 Y_1^2Y_4^4 + 
x^4 y^6   X_1^2X_4^2 Y_1^4Y_4^2
$$ 
$$
\b j_2(x,y,X,Y)= (1 - x^2 y^4 X_2^2 Y_2^4);
\qquad
\b j_3(x,y,X,Y) = (1 - x^2 y^2 X_3^2 Y_3^2);
\qquad \b j = \b j_1\b j_2\b j_3,
$$
so that 
for $k= (k_1, \dots, k_4) \in \Z_{\ge 0}^4,$ the polynomial
$j(k)$ is equal to  
$\b j(x,y,x^k,y^k),$ where  $x^k:=(x^{k_1}, \dots, x^{k_4})$
and $y^k:= (y^{k_1}, \dots, y^{k_4}).$
Likewise, one computes a polynomial $\b i_1(x,y,X,Y)$ 
such that $i_1(k) = \b i_1(x,y,x^k,y^k).$ It can be expressed 
as a sum of $12$ monomials in $X$ and $Y,$ each with a 
coefficient which is a polynomial in $x$ and $y.$  
Thus 
$
i_1(k) 
= \sum_{i=1}^{12}
c_i(x,y) \prod_{j=1}^4 (\mu_{i,j}(x,y))^{k_i},
$
for some polynomials $c_1, \dots, c_{12}$ 
and monomials $\mu_{1,1} \dots, \mu_{12, 4}$ in $x$ and $y.$
Now, 
$$
\sum_{m \in \Z_{\ge 0}^4} ii(m_2, m_1, m_3, m_4)
 t_1^{m_1} t_2^{m_2} t_3^{m_3} t_4^{m_4}$$
$$= \sum_{{A,B,C,D\in \Z}
\atop{A+D,B+C,B+D,C+D\ge 0}
}
i(A,B,C,D) x^{2A+B+C+2D}y^{2A+B+2C+3D}
t_1^{B+C}t_2^{A+D}t_3^{B+D}t_4^{C+D}.
$$
This is a sum of seven subsums corresponding 
to the seven cases which appear in
\eqref{pro:3.1}.  The simplest 
of these is 
$$\begin{aligned}
&\sum_{A,B,C,D \ge 0}i_1(A,B,C,D) 
x^{2A+B+C+2D}y^{2A+B+2C+3D}
t_1^{B+C}t_2^{A+D}t_3^{B+D}t_4^{C+D}
\\&= \sum_{i=1}^{12}
\frac{c_i(x,y)}
{(1-\mu_{i,1}(x,y) t_2x^2y^2)(1-\mu_{i,2}(x,y) t_1 t_3xy)(1-\mu_{i,3}(x,y) t_1t_4xy^2)(1-\mu_{i,4}(x,y) t_2t_3t_4x^2y^3).
}\end{aligned}
$$  
In each of the other six sums one can make a substitution 
to obtain a similar, fourfold sum of $A',B',C',D'$ from $0$ to 
infinity.  
For example, in the second case listed  \eqref{pro:3.1}, one has the conditions
$A,B,C\geq -D\geq 1.$
Substituting 
$
A'= A+D,$ $B' = B+D,$ $C' = C+D,$ and $D' = -D-1,$
yields
$$
\begin{aligned}
&\sum_{A',B',C',D'=0}^\infty i_1(A',B',C',0)
t_1^{B'+C'+2D'+2}t_2^{A'}
t_3^{B'}t_4^{C'}
x^{2A'+B'+C'+4D'+4}y^{2A'+B'+2C'+6D'+6}
\\&=
\sum_{i=1}^{12}
\frac{c_i(x,y)x^4y^6t_1^2}{(1-\mu_{i,1}(x,y)t^2x^2y^2)(1-\mu_{i,2}(x,y)t_1t_3xy)(1-\mu_{i,3}(x,y)t_1t_4xy^2)(1-t_1^2x^4y^6)}
\end{aligned}
$$
The other five subsums are treated similarly.
Totaling up the resulting rational functions and simplifying 
gives
\eqref{power series identity}, 
completing the proof of theorem 
\end{proof}

\subsection{Convergence}
\label{ss: convergence}
In this section, we prove the convergence of 
local zeta integrals 
 \begin{thm}
\label{thm: convergence}
Take $W \in \c W_{\psi_N}(\pi), f \in \Flat(\chi)$ 
and $\phi \in \c S(\Mat_{4\times 2}).$ 
Then  the local zeta integral 
$I(W, f, \phi; s)$ converges 
for $\Re(s_1-s_2)$ and $\Re(s_2)$ 
both sufficiently large.
\end{thm}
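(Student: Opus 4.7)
The plan is to use the reductions already carried out in Section \ref{ss: initial computations} to reduce convergence of $I(W,f,\phi;s)$ to convergence of a torus integral, and then dominate the integrand by combining a gauge bound for the $\psi_N$-Whittaker function on $C_Q$, the convergence of a standard intertwining operator for large $\Re s$, and the rapid decay coming from the Weil representation.

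First, by \eqref{eq: I = int of I1 over K} and compactness of $K$, it suffices to bound $I_1(R(k).W, R(k).f, \omega_{\psi,l}(k).\phi; s)$ absolutely and uniformly as $k$ varies in $K$. The manipulations of Section \ref{ss: initial computations} show that, after a Dixmier--Malliavin decomposition expressing $f$ as a finite sum of convolved sections and using \eqref{eq: III(u0 t u7 phi)}, $I_1$ is controlled by $|I_3(W,f';s)|$ for a finite collection of sections $f'$ built from $f$ and $\phi$, where $I_3$ is defined in \eqref{eq: def of I3}. Thus it is enough to show that
\[
\int_{Z\bs T} |W(t)|\,\delta_B^{-1/2}(t)\,|\nu_s(t)|\,\bigl|\II_2.f_{\chi;s}(\widetilde\delta(t))\bigr|\,dt
\]
converges.

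Second, for the inner factor I would use \eqref{int U0 = int U1 int U2}, which expresses $\II_2.f_{\chi;s}$ as a Jacquet-type integral $\c J_{\psi_{{\bf c},4}}$ applied to the image of $f_{\chi;s}$ under the standard intertwining operator $M(w_1^{-1},\chi;s)$, together with a residual Fourier integration in the variables $r_1,r_2$. The intertwining operator converges absolutely for $\Re(s_1-s_2)$ and $\Re(s_2)$ sufficiently large, and the Jacquet integral converges because $\psi_{{\bf c},4}$ is generic on $U_{w_4}$. Evaluating at $\widetilde\delta(t)$ and using the explicit shift-character \eqref{chi s prime} yields a bound of the form $|\II_2.f_{\chi;s}(\widetilde\delta(t))| \le C(s)\sum_j |\eta_j(t)|$ on a fixed Weyl-chamber-translate in $T$, where each $\eta_j$ is a quasicharacter of $T$ whose exponent is an affine function of $s$.

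Third, I would invoke Jacquet's standard gauge bound for generic Whittaker functions on $C_Q$: there exist finitely many characters $\mu_i$ of $T$ and Schwartz--Bruhat functions $\Phi_i$ on $T$, each supported in a translate of the positive Weyl chamber relative to $B$, such that
\[
|W(t)| \le \delta_B^{1/2}(t) \sum_i |\mu_i(t)|\,\Phi_i(t),
\]
the estimate being uniform as $W$ ranges over the right $K$-translates of the given Whittaker vector. Plugging these bounds into $I_3$ reduces the problem to absolute convergence of finitely many torus integrals $\int_{Z\bs T} |\mu_i(t)\nu_s(t)\eta_k(t)|\,\Phi_i(t)\,dt$, each of which converges for $\Re(s_1-s_2)$ and $\Re(s_2)$ sufficiently large: the $\Phi_i$ provide Schwartz decay in the positive chamber, while the real parts of the exponents of $|\nu_s|$ can be made arbitrarily positive. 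The main technical obstacle is the uniformity-in-$k$ of both the Dixmier--Malliavin decomposition of $f$ and the gauge estimate on $|W|$; both follow from the standard fact that these constructions can be chosen continuously on compact families of vectors, which is what is needed to interchange the $K$-integration with the estimate.
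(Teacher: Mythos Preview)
Your outline follows the paper's approach in broad strokes: reduce to $I_3$ via Section~\ref{ss: initial computations}, dominate $|\II_2.f_{\chi;s}(\widetilde\delta(t))|$ via an intertwining operator applied to the spherical section, and combine with the Whittaker asymptotic expansion. The gap is in the last step, where your sketch elides the genuine difficulty.

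The Whittaker expansion that is actually available (cf.\ \eqref{W= sum x phi} in the paper) has the shape $W(t)\delta_B^{-1/2}(t) = \sum_x \Phi_x(ad,bc,bd,cd)\,x(t)$ with each $\Phi_x$ Schwartz on $F^4$; it controls only the four simple-root values $ad,bc,bd,cd$ of $C_Q$. By contrast, $\widetilde\delta(t)$ is a unipotent element in $G$ depending on $a=t^{\beta_1},\,b=t^{\beta_4},\,c=t^{-\beta_6},\,d=t^{-\beta_3}$ \emph{individually}, and the Iwasawa torus part of $w_3\widetilde\delta(t)$ is a piecewise quasicharacter on the sixteen regions determined by whether each of $|a|,|b|,|c|,|d|$ is $\le 1$ or $>1$. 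Hence the bound you assert, $|\II_2.f_{\chi;s}(\widetilde\delta(t))| \le C(s)\sum_j|\eta_j(t)|$ with $\eta_j$ a quasicharacter on a single Weyl-chamber-translate, is not correct as stated: on the dominant chamber for $C_Q$ each of $|a|,|b|,|c|,|d|$ can still be arbitrarily large or small, and no finite sum of quasicharacters captures the piecewise behaviour. Similarly, the exponents of $|\nu_s(t)|=|a|^{2u_1}|b|^{u_1}|c|^{u_2}|d|^{u_1+u_2}$ in the individual variables $a,b,c,d$ do not by themselves yield convergence, since the Schwartz function $\Phi_x$ provides decay only along the products $ad,bc,bd,cd$ and is merely bounded as any of those tend to $0$. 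The paper's proof confronts this head-on: it writes out the piecewise quasicharacter on all sixteen regions, and in each region makes an explicit monomial change of variables so that, after combining $\nu_s$, the Iwasawa contribution, and $\Phi_x$, every new variable either ranges over a bounded set or appears with an exponent that is a nontrivial nonnegative combination of $u_1=\Re(\frac{s_1-s_2-2}{2})$ and $u_2=\Re(\frac{s_1+s_2-4}{2})$. That region-by-region bookkeeping is the substantive content of the convergence proof and cannot be replaced by a one-line appeal to a global quasicharacter bound.
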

\begin{proof}
We need to show that convergence of 
$I_3( W, f; s)$ defined in \eqref{eq: def of I3}, 
for $W \in W_{\psi_N}(\pi)$ and $f$ a smooth 
section of the family of induced representations
$\Ind_P^G(\chi;s).$ To do this, we simply bound 
$f_{\chi;s}$ by a constant times the spherical 
section 
$f_{\Re(s)}^\circ,$
where $\Re(s) \in \R^2$ is the real part of $s.$ 
 Then $\II_2.f_{\chi;s}(\widetilde \delta(t))$ is bounded by a constant multiple of 
$M(w_2^{-1}w_1^{-1}, \Re(s)).f^\circ_{\Re(s)}(w_3\widetilde \delta(t)),$
where $M(w_2^{-1}w_1^{-1},\Re(s))$ is a standard intertwining 
operator. 
The unramified character $(\chi_0; \Re(s))\delta_{B_G}^{-1}$ may be
identified with an element $\varsigma$ of $X(T_G)\otimes_Z\R.$ 
The integral defining the standard intertwining 
operator converges provided the canonical pairing 
$\la \varsigma, \alpha^\vee \ra$ is positive for 
all positive roots $\alpha$ with 
$w_2^{-1} w_1^{-1} \alpha < 0.$ 
Inspecting this set of roots, one finds it is convergent provided
$\Re(s_2)>1, \Re(s_1-s_2)> 2,$ and $\Re(s_1+s_2) > 5.$
Moreover, it converges to a section of the 
representation induced (via normalized induction)
from $\varsigma^{w_1w_2}.$ 

Next, we need to understand the dependence of 
$M(w_1w_2, \Re(s)).f^\circ_{\Re(s)}(w_3\widetilde \delta(t))$ on $T.$ 
In order to do this, we write 
$w_3 \wt \delta(t) $ 
as $\wt \nu(t) \wt \tau(t) \wt \kappa(t)$ 
where $\wt\nu(t) \in  U, \ \wt \tau(t) \in  T_G$
and $\wt \kappa(t)$ varies in a compact set. It is 
convenient to do so using the 
basic algebraic substitution
\begin{equation}\label{eq:algIwasawa}
\bpm 1 & 0 \\ r & 1 \epm 
= \bpm r^{-1} & 1 \\ 0 & r \epm 
\bpm 0 & -1 \\ 1 & r^{-1} \epm,
\end{equation}
which corresponds to the Iwasawa decomposition 
if $F$ is nonarchimedean, but remains valid in the 
Archimedean case as well. 

Recall that $\wt \delta(t)$
is the product of $x_{37}( -b),$ $x_{36}( -c)$
and  $x_{25}(-d) 
 x_{48}(-a)
x_{29}(ad),$
which all commute with one another. 
We can 
 partition $T$ into $16$ subsets 
 and use the identity \ref{eq:algIwasawa} to obtain 
 a uniform expression for $\wt \tau(t)$ on each 
 subset, and compute $\varsigma^{w_1w_2}\delta_{B_G}^{1/2}( \wt \tau(t))$ in each case,
 obtaining 
$$
\left \{ \begin{array}{lr}
1, & |b| \le 1,\\
|b|^{-2u_1}, & |b| > 1
\end{array}\right \}\times 
\left \{\begin{array}{lr} 1, & |c|\le 1,\\
|c|^{-2u_2}, & |c| > 1
\end{array}\right \}
\times
\left \{ \begin{array}{lr}
1, & |a|, |d| \le 1,\\
|d|^{-2u_2}, & |a|\le 1, |d| > 1,\\
|a|^{-2u_1}, & |a| > 1, |ad| \le 1,\\
|a|^{-2u_1-u_2}|d|^{-2u_2}, & |a|> 1, |ad| >1\end{array}
\right \},
$$
where $u_1:= \Re(\frac{s_1 -s_2-2}2),\ u_2:= \Re(\frac{s_1+s_2-4}2).$
Note that most these contributions 
are already visible in  
 \eqref{pro:3.1}. 
Moreover, as in \eqref{eq:11} we have 
$
|\nu_s(t)| = |a|^{2u_1} |b|^{u_1} |c|^{u_2} |d|^{u_1+u_2} 
$

Next, we consider the quantity $W(t)
\delta_B^{-1/2}(t)$ which appears in the integral
\eqref{eq: def of I3}. Using 
\cite{cass-shal} or \cite{lapid-mao} in the nonarchimedean 
case, or \cite{Wallach1}, and \cite{Wallach2} as 
explicated in \cite{Soudry-Memoir} and \cite{Soudry-ASENS95} in the archimedean 
case, we have  
\begin{equation}
\label{W= sum x phi}
W(t)
\delta_B^{-1/2}(t) = \sum_{x \in X_{\pi}} \Phi_x(ad,bc,bd, cd)
x(t),
\end{equation}
 where $X_\pi$ is a finite set of finite functions depending on the representation $\pi,$ and $\Phi_x$ is a Bruhat-Schwartz function $F^4 \to \C$ for 
each $x.$

Thus we obtain a sum of integrals of the form 
\begin{equation}
\label{eq: int D phi(t) x(t) ... dt}
\int_D \Phi(ad, bc,bd,cd) x(t) 
|a|^{k_1u_1+l_1u_2} |b|^{k_2u_1+l_2u_2} |c|^{k_3u_1+l_3u_2} |d|^{k_4u_1+l_4u_2}
 \, dt,
\end{equation}
where $D$ is one of our $16$ subsets,
$k_1, \dots, k_4$ and $l_1, \dots, l_4$ 
are explicit integers depending only on $D,$   
$\Phi$ is a Bruhat-Schwartz function $F^4 \to \R,$
and $x$ is a real-valued finite function $Z\bs T \to \R.$

Now, for each of the seven cases which appear 
in \eqref{pro:3.1},  make a change of 
variables, as in the unramified case so that $|a|^{k_1u_1+l_1u_2} |b|^{k_2u_1+l_2u_2} |c|^{k_3u_1+l_3u_2} |d|^{k_4u_1+l_4u_2}$ is 
expressed as powers of the absolute values of the 
new variables. For example, 
when $|d|>1$ and $|a|,|b|,|c| \le 1,$ we have
$|\nu_s(t)| |d|^{-\Re(s_1)-\Re(s_2)+4}
=|a|^{2u_1} |b|^{u_1} |c|^{u_2} |d|^{u_1-u_2},$
and substitute $a'=ad,  b'=bd, c'=cd$ 
and $d'=d^{-1}.$ After the substitution, 
each exponent is a nontrivial non-negative
linear combination of $u_1$ and $u_2.$ 
Also $|d'|$ is bounded, 
and we have $\Phi(a', b'c'(d')^2, b',c'),$
which provides convergence as $|a'|, |b'|$ 
or $|c'| \to \infty.$  
It follows that the integral converges provided
$u_1$ and $u_2$ are sufficiently large, relative 
to the finite function $x.$ 

As a second example, we consider the 
case $|a|,|c|>1, |b|,|d|\le 1.$ In this case, 
we make the change of variables
$b'=bc, d'=acd, a'=a^{-1}, c'=c^{-1}.$ 
We obtain the integral
$$
\int\limits_{|a'|<1, |c'|< 1, |b'c'|\le 1, |a'c'd'|\le 1}\hskip -.6in
\Phi(c'd', b', a'b'( c')^2d', a'd') x(t)
|a'|^{u_1+u_2}|b'|^{u_1}|c'|^{2u_1+2u_2}|d'|^{u_1+u_2} \, da'\, db'\, dc'\, dd',
$$
assuming that $u_1$ and $u_2$ are positive and
sufficiently large (depending on $x$), the integrals
on $a'$ and $c'$ are convergent due to 
the domain of integration, and the integrals 
on $b'$ and $d'$ are convergent from the decay 
of $\Phi.$ Indeed, 
$\Phi(c'd', b', a'd', a'(b')^2 c'd')
\ll |a'b'c'(d')^2|^{-N}$ for any positive integer $N$ 
because $\Phi$ is Bruhat-Schwartz, and then 
$ |a'b'c'(d')^2|^{-N}\le |b'd'|^{-N}$ on the domain $D.$ 
The other five cases appearing in \eqref{pro:3.1}
are handled similarly. 

The nine cases which do not appear in  \eqref{pro:3.1} are easier. For example suppose that 
$|c|$ and $|d|$ are both $>1$ while $|a|$ and $|b|$ 
are both $\le 1.$ Then the exponents 
of $|a|$ and $|b|$ are the same as in  $\nu_s(t),$
i.e., they are $2u_1$ and $u_1$ respectively. 
This gives convergence of the integrals on $a$
and $b$ when $\Re(u_1)$ is sufficiently large (relative
to $x$). 
The integrals on $|c|$ and $|d|$ converge
because of the rapid decay of $\Phi$ in $cd.$ 
The other eight cases are treated similarly, completing
 the proof of the convergence of $I_1(W, f, \phi; s).$
Now consider $I_1(R(k).W, R(k).f, \omega_\psi(k).\phi; s).$ 
Each bound used in the analysis of $I_1$
 can be made uniform 
as $k$ varies in the compact set $k.$ 
Hence $I_1(R(k).W, R(k).f, \omega_\psi(k).\phi; s)$
varies continuously with $k$ so its integral is again 
absolutely convergent. 
\end{proof}

\subsection{Continuation to a slightly larger domain}
In this section, we 
prove that the local zeta integral $I(W,f, \phi;s)$ 
extends analytically to a domain that 
includes the point $s_1=5, s_2=1.$ This point is 
of particular interest for global reasons. 
We keep the notation from the previous section.
There are two issues. The 
first is related to the convergence 
of the integral $\II_2.f_{\chi;s}.$ As we have 
seen, this integral is {\it not} 
absolutely convergent at $(5,1).$ We must show
that it extends holomorphically to a domain containing 
$(5,1).$ 
Then we need to prove convergence of 
the integral over $Z\bs T.$ 
 The  
domain of absolute convergence for this integral
depends on the exponents of the representation 
$\pi.$ To make this precise, we use terminology 
and notation from \cite{Asg-Sha}, 
section 3.1.

\begin{pro}\label{pro:extension to (5,1)}
Suppose that $\Pi$ satisfies $H(\theta_4)$ 
and that $\tau_1$ and $\tau_2$ satisfy 
$H(\theta_2)$ (as in \cite{Asg-Sha}, 
section 3.1). 
Then for any $\varepsilon > 0,$ the local zeta integral 
 $I(W, f, \phi; s)$ 
extends holomorphically to all $s \in \C^2$ satisfying
$\Re(s_1-s_2) \ge \max( 2 \theta_4+2\theta_2+2,3)+\varepsilon, 
\ \Re(s_1+s_2) \ge 5+\ve,$ 
$\Re(s_2) \ge \frac 12 + \ve,$
$\Re(s_1+2s_2) \ge 2 \theta_2+1.$
\end{pro}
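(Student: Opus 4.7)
\medskip

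\noindent\textbf{Proof proposal.} The plan is to refine the convergence argument of Theorem \ref{thm: convergence} by factoring the intertwining operator out of $\II_2.f_{\chi;s}$ and then using the bounds on $W(t)\delta_{B}^{-1/2}(t)$ that follow from the hypotheses $H(\theta_4)$ and $H(\theta_2)$. I would start from the identity \eqref{int U0 = int U1 int U2} which writes
\[
\II_2.f_{\chi;s}(\wt\delta(t)) \; = \; \int_{F^2} \bigl[\c J_{\psi_{{\bf c},4}} \circ M(w_1^{-1},\chi;s).f_{\chi;s}\bigr]\bigl(x_{21}(r_1)x_{54}(r_2)\wt\delta(t)\bigr)\, \psi_{\mathbf{c}}(r)\,dr,
\]
and first normalize $M(w_1^{-1},\chi;s)$ by dividing out the Gindikin--Karpelevich factor $Z_1(s)$ (extended from spherical to general smooth sections by standard rank-one arguments). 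The normalized operator $M^*(w_1^{-1},\chi;s):\Ind_P^G(\chi;s)\to \nInd_{B_G}^G((\chi;s)\delta_{B_G}^{-1/2})^{w_1}$ is holomorphic for $\Re(s_2)>\tfrac12,\ \Re(s_1-s_2)>1,\ \Re(s_1+s_2)>3$, a region which already contains $(5,1)$. The Jacquet integral $\c J_{\psi_{{\bf c},4}}$ for the Levi $GL_1\times GL_3\times GSO_4$ is then a smooth functional, holomorphic on the region where $\varsigma^{w_1}$ is in the appropriate cone, and can be continued via the usual device of integration by parts against Bruhat--Schwartz data (as in the archimedean estimates of \cite{Soudry-Memoir} or the $p$-adic Casselman--Shalika style manipulations).

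After this reduction, the zeta integral becomes an integral over $Z\bs T$ of $W(t)\delta_B^{-1/2}(t)\cdot \nu_s(t) \cdot h(t;s)$, where $h(t;s)$ is built from the normalized intertwining operator and Jacquet integral, and its dependence on $t$ can be made explicit through the Iwasawa decomposition of $w_3\wt\delta(t)$ precisely as in the proof of Theorem \ref{thm: convergence} (producing the sixteen subregions and their associated powers of $|a|,|b|,|c|,|d|$). Next, I would use the expansion \eqref{W= sum x phi}; under $H(\theta_4)$ for $\Pi$ and $H(\theta_2)$ for each $\tau_i$, the finite functions $x(t)$ are bounded by products of $|t^{\alpha_i}|^{\pm\theta_i}$, with $\theta_i=\theta_4$ along simple roots of $Sp_4$ and $\theta_i=\theta_2$ along the $SL_2$-directions. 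Inserting these bounds trades a factor of $|a|^{2\theta_4+2\theta_2}$ in the worst direction (the one paired with $s_1-s_2$) and a factor of $|c|^{2\theta_2}$ in the direction paired with $s_1+2s_2$, while the $s_1+s_2$ and $s_2$ inequalities come directly from the normalized intertwining operator.

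The final step is the case-by-case bookkeeping across the seven subregions enumerated in \eqref{pro:3.1}. In each region one changes variables exactly as in the convergence proof so that every coordinate is integrated over either a compact set or against the Bruhat--Schwartz function $\Phi_x$, and then one checks that the linear form in $(\Re u_1,\Re u_2)$ appearing as the exponent of each new variable becomes strictly positive once the $\theta_i$ corrections from $x(t)$ are subtracted. This yields precisely the four inequalities in the statement: $\Re(s_1-s_2)>\max(2\theta_4+2\theta_2+2,3)+\ve$ handles the $|a|$-direction in the hardest region (case four), $\Re(s_1+2s_2)\ge 2\theta_2+1$ handles the $|c|$-direction, and $\Re(s_1+s_2)\ge 5+\ve$, $\Re(s_2)\ge\tfrac12+\ve$ come from the normalized intertwining operator.

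The main obstacle, as I see it, is verifying that the single worst region (I expect case four: $|a|,|c|>1,\ |b|,|d|\le 1$) actually saturates each of the four stated inequalities, and in particular that the interaction between the $H(\theta_4)$ bound on $\Pi$ and the $H(\theta_2)$ bound on $\tau_i$ contributes \emph{additively} via $2\theta_4+2\theta_2+2$ rather than multiplicatively or in some more subtle combined form. This requires a careful analysis of which simple-root directions actually blow up with $|a|$ in the new coordinates; once that is settled, the remaining six cases give strictly weaker constraints and the proposition follows by combining the uniform $K$-bound from \eqref{eq: I = int of I1 over K} with the torus estimate.
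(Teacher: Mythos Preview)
Your overall strategy---factor $\II_2$ through $M(w_1^{-1},\chi;s)$, continue a Jacquet integral, then run the case analysis with Whittaker asymptotics---matches the paper's, but there is a genuine gap in the middle step that the paper handles more precisely.

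First, normalizing $M(w_1^{-1},\chi;s)$ is unnecessary: the unnormalized operator already converges absolutely for $\Re(s_2)>\tfrac12$, $\Re(s_1-s_2)>1$, $\Re(s_1+s_2)>3$, a region containing $(5,1)$. The obstruction to convergence lies entirely in the $w_2$-part $\II_3$. The paper's key observation is that when you write $\II_3$ as a composite of rank-one intertwining operators along $\{\alpha>0:w_2^{-1}\alpha<0\}$, exactly \emph{one} of them fails to converge at $(5,1)$: the one attached to $\alpha_3$, for which $\langle \alpha_3^\vee,\varsigma^{w_1}\rangle=2\Re(s_2)-2$. That single rank-one integral is then recognized as a $GL_2$ Jacquet integral and continued via \cite{JacquetsThesis}.

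The gap in your proposal is what happens \emph{after} continuation. You say the dependence on $t$ ``can be made explicit through the Iwasawa decomposition of $w_3\wt\delta(t)$ precisely as in the proof of Theorem~\ref{thm: convergence}.'' But that argument bounded $f$ by a single spherical section, which is only valid in the domain of absolute convergence. Once the $\alpha_3$ integral is continued, its asymptotics are governed by the exponents of the rank-one induced representation, namely $((\chi;s)\delta_{B_G}^{-1/2})^{w_1}$ \emph{and} $((\chi;s)\delta_{B_G}^{-1/2})^{w_1w[3]}$. The paper therefore bounds $|\II_2.f_{\chi;s}|$ by a linear combination of \emph{two} spherical sections, $M(w_2^{-1}w_1^{-1},\Re(s)).f^\circ_{\Re(s)}$ and $M((w_2')^{-1}w_1^{-1},\Re(s)).f^\circ_{\Re(s)}$ where $w_2=w[3]w_2'$, with a possible extra logarithmic factor on the line $s_2=1$ (absorbed into the $\varepsilon$). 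Without this two-exponent bound, the subsequent case-by-case analysis has no input to work with.

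Finally, your attribution of the inequalities is slightly off. The conditions $\Re(s_2)>\tfrac12$, $\Re(s_1+s_2)>5$, and the ``$3$'' in $\max(2\theta_4+2\theta_2+2,3)$ all come from convergence of the $w_2'$-intertwining operator on the \emph{second} spherical section (the one with exponent $w_1w[3]$), not from $M(w_1^{-1})$; the $\theta$-dependent inequalities then come from the torus integral against $W(t)\delta_B^{-1/2}(t)$ as you describe.
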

\begin{proof}
We first need to extend 
$\II_2.f_{\chi;s}$ beyond its domain of absolute
convergence. 
It suffices to do this for flat $K$-finite sections, 
even though the convolution sections encountered in section 
\ref{ss: initial computations} are not, in general, flat of 
$K$-finite. Indeed, the integral operator
$\II_2$ commutes with the 
convolution operators considered in \ref{ss: initial computations}. Moreover, these operators are 
rapidly convergent, and hence preserve holomorphy.

As we have seen in the unramified 
computation $\II_2$ can be expressed 
$\II_3 \circ M(w_1^{-1}, \chi;s),$
where $\II_3$ is an operator defined on  
$\nInd_{B_G}^G((\chi;s )\delta_{B_G}^{-1/2})^{w_1}$
by the $u_2$ integral in  \eqref{int U0 = int U1 int U2}.
Then, $M(w_1^{-1},  \chi; s)$ is absolutely convergent for 
$\Re(s_2) > \frac 12, \Re(s_1-s_2) > 1, 
\Re(s_1+s_2-3) > 3.$
If we insert absolute values into the integral which defines
$\II_3,$ we obtain a standard intertwining operator 
attached to $w_2^{-1}.$ We 
may write is as a composite of rank one intertwining 
operators attached to 
$\{ \alpha > 0: w_2^{-1} \alpha < 0\}.$ 
The rank one operator attached to $\alpha$ 
is absolutely convergent provided that 
$ \la \alpha^\vee, \varsigma^{w_1}\rangle$ is positive.
Running through the eight relevant roots, we find 
that only one rank one operator 
diverges at $(5,1).$ It is attached to the simple root
$\alpha_3$ which satisfies  $\la \alpha^\vee, \varsigma^{w_1}\rangle=2\Re(s_2)-2.$

Thus, we only need to improve our treatment 
of the integral over a single one-parameter 
unipotent subgroup.
Thus, we consider 
\begin{equation}
\label{GL2 Jac}
\int_F f^{w_1}_{\chi;s}(w[3]x_{34}(r) g) \psi(c_4r) \, dr,
\end{equation}
where $c_4 \in F$ and 
$f^{w_1}_{\chi;s}$ is a section of the family 
$\nInd_{B_G}^G((\chi; s)\delta_{B_G}^{-1/2})^{w_1}, \ s \in \C^2.$ 
Notice that \eqref{GL2 Jac} may be regarded as a
Jacquet integral for the rank-one Levi attached to the simple 
root $\alpha_3.$ 
By \cite{JacquetsThesis}, this extends to an entire
function of $s$ when $f^{w_1}_{\chi;s}$ is flat. 
If we apply it to the output of $M(w_1^{-1},  \chi; s),$ then it has no poles other
than those of $M(w_1^{-1},  \chi; s).$ Now we use again 
the fact that the asymptotics of a Whittaker function, 
are controlled by the exponents of the relevant 
representation. This time we apply it to the induced representation of our rank one Levi. For 
most values of $s,$ the exponents 
are $((\chi; s)\delta_{B_G}^{-1/2})^{w_1}$
and $((\chi; s)\delta_{B_G}^{-1/2})^{w_1w[3]}$
and the Whittaker function is bounded in 
absolute value by a linear combination 
of spherical vectors. 
 
On the line $s_2=1,$ this may fail: 
if $((\chi; s)\delta_{B_G}^{-1/2})^{w_1}=((\chi; s)\delta_{B_G}^{-1/2})^{w_1w[3]},$ then 
an extra log factor appears in the asymptotics of the Whittaker  function (cf. \cite{Goldfeld-Hundley}, 6.8.11, 
for example).  Bounding $\log |x|$ by $|x|^{-\varepsilon}$
with $\ve > 0$
as $x \to 0,$
in this case, we again bound the integral \eqref{GL2 Jac}
by a sum of spherical sections. In fact, the extra $|x|^\varepsilon$ may be safely ignored, since we obtain convergence for $s$ in an open set and $\ve>0$ can be taken 
arbitrarily small. Thus, if $w_2=w[3]w_2',$ then 
$\II_2.f_{\chi;s}$ extends holomorphically to 
the domain 
where the standard intertwining operator
attached to $w_2'$ converges 
on both $f^\circ_{\Re(s), w_1},$ 
 and $f^\circ_{\Re(s), w_1w[3]}.$
Inspecting $\{ \alpha > 0 : (w_2')^{-1} \alpha < 0 \},$
we see that this means
 $\Re(2s_2-1), \Re(s_1-s_2-3)$
and $\Re(s_1+s_2-5)$ must all be positive.
As a side effect, we find that  
 $|\II_2.f_{\chi;s}(g)|$ is bounded by 
 a suitable linear combination of 
$M(w_2^{-1}w_1^{-1}, \Re(s)).f^\circ_{\Re(s)}$
and $M((w_2')^{-1}w_1^{-1}, \Re(s)).f^\circ_{ \Re(s)}.$

As before, we obtain a sum of integrals of the 
form \eqref{eq: int D phi(t) x(t) ... dt} where 
now the integers $k_1, \dots, k_4$ and
$l_1, \dots , l_4$ depend on the choice of 
domain $D$ and on a choice of 
 between 
$w_2$ and $w_2'.$ 

In order to obtain a precise domain of convergence, 
we need information about the finite function $x.$ 
Firstly, since we have taken absolute values and
assumed unitary central character, it factors through 
the map $t \mapsto (|ad|, |bc|, |bd|, |cd|).$ We may assume 
that $x$ is given in terms of real powers of the coordinates
and non-negative integral powers of their logarithms, since
such functions span the space of real-valued finite functions.
Since a power of $\log y$ may be bounded by an arbitrarily 
small positive (resp. negative) power of $y$ as $y\to \infty$
(resp. $0$), for purposes of determining the domain of 
convergence, we may assume that there are no logarithms.
Thus we may assume 
$x(t) = |ad|^{\rho_1} |bc|^{\rho_2} |bd|^{\rho_3} |cd|^{\rho_4}$
with $\rho_1,\rho_2,\rho_3,\rho_4 \in \R.$ 
The quadruples $(\rho_1,\rho_2,\rho_3,\rho_4)$ which appear 
are governed by the exponents of $\pi,$ by 
\cite{cass-shal} or \cite{lapid-mao} in the nonarchimedean
 case, 
and \cite{Wallach1}, \cite{Wallach2} 
(see also \cite{Soudry-ASENS95})
in the archimedean
case.
Hence they are bounded in absolute value
by $\max(\theta_2, \theta_4),$
by the definition of $H(\theta_2)$
and $H(\theta_4)$ in \cite{Asg-Sha} and the 
bound on exponents of tempered 
representations found in \cite{Wallach2}, 
Theorem 15.2.2 in the archimedean case, 
or \cite{Waldspurger} in the nonarchimedean
case, we see that $|\rho_1|, |\rho_2| \le \theta_4, 
|\rho_3|, |\rho_4| \le \theta_1.$  

What remains is a careful case-by-case analysis 
along the same lines as the proof of convergence. 
For each choice of $D,$ after a suitable change 
of variables we have an integral which is convergent
provided $u_1$ and $u_2$ are sufficiently 
large, and ``sufficiently 
large'' is given explicitly in terms of $\rho_1, \dots, \rho_4.$ 

For example, the above integral corresponding to the 
case $|a|,|c|>1$ and $|b|,|d|\le 1$ will now feature
a Schwartz function integrated against
$$|c'd'|^{\rho_1} |b'|^{\rho_2}  |a'b' (c')^2d'|^{\rho_3}|a'd'|^{\rho_4}
|a'|^{u_1+u_2}|b'|^{u_1}|c'|^{2u_1+2u_2}|d'|^{u_1+u_2},$$
and so will converge provided
$u_1+u_2+\rho_3+\rho_4$, 
$u_1+\rho_2+\rho_3$, $2u_1+\rho_1+2\rho_2,$ and $u_1+u_2+\rho_1+\rho_3+\rho_4$ are all positive
 \end{proof}

\section{Local zeta integrals II}\label{section: local zeta integrals ii}
In this section we continue our study of 
 the local zeta integral $I(W, f, \phi; s)$ 
at the ramified places. 
Write $U_P^-$ for the unipotent radical of the parabolic opposite $P.$
Notice that $PU_P^-w$ is a Zariski open subset of $GSO_{12}.$ We say that 
$f \in \Flat(\chi)$ 
is {\bf simple} if it is
supported on $PK_1$ where $K_1$ is a compact subset of $U_P^-w.$ 

\begin{pro}
\label{pro: nonvanishing}
Suppose that $f$ is simple. 
Then $I(W,f, \phi; s)$ has meromorphic 
continuation to $\C^2$ for each $\phi \in \c S(\Mat_{4\times 2})$ 
and each $W\in \c W_{\psi_N}(\pi)$
Moreover, if  $s_0$ is an element of $\C^2,$ 
then there exist
$W, f$ and $\phi$ such that 
$I(W, f, \phi; s_0) \ne 0.$ 
\end{pro}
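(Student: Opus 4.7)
The proof plan splits into two pieces: (i) establish the meromorphic continuation of $I(W,f,\phi;s)$, and (ii) for each $s_0\in\C^2$, exhibit a triple $(W,f,\phi)$ with $I(W,f,\phi;s_0)\ne0$.

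For (i), the key observation is that when $f$ is simple, $f_{\chi;s}$ is actually \emph{entire} in $s$: flatness forces its restriction to $K$ to be an $s$-independent compactly supported smooth function, while the character $(\chi;s)$ depends holomorphically on $s$. Moreover, $PU_P^-w$ is Zariski open in $G$, so via the Bruhat-type decomposition every $g$ in the open cell writes uniquely as $g=p\cdot u^-w$ with $u^-\in U_P^-$, and a simple section is given by $f_{\chi;s}(pu^-w)=(\chi;s)(p)\tilde f(u^-)$ for a fixed $\tilde f\in C_c^\infty(U_P^-)$. Once this is set, I would plug the unfolded form \eqref{eq: global unfolded} into the inner $u$-integral and show that for each fixed $g$, the locus of $u\in U_Q^w\bs U_Q$ with $wug\in P\cdot\mathrm{supp}(\tilde f)\cdot w$ is compact: this is a properness statement for the map $U_Q\to G/P$ composed with the closed embedding of $g$-orbits, and follows from the fact that $K_1$ is compact in the open cell. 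The remaining outer integration over $ZU_4\bs C_Q$ is controlled by Whittaker asymptotics of $W$ (as in \eqref{W= sum x phi}) and Schwartz decay of $I_0(\phi,\cdot)$; standard estimates then give absolute convergence for \emph{every} $s\in\C^2$. Holomorphy is immediate.

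For (ii), I would use a localization/concentration argument. Given $s_0$, first pick a ``test point'' $\gamma_0=u_0^-w\in U_P^-w$ that lies in the image of the map $(u,g)\mapsto wug$ from $U_Q\times C_Q$ into the open cell $PU_P^-w$; that is, fix $(u_0,g_0)\in U_Q\times C_Q$ and coset representatives so that $wu_0g_0=u_0^-w$. Choose $\tilde f\in C_c^\infty(U_P^-)$ supported in a small neighborhood $\Omega^-$ of $u_0^-$ with $\tilde f(u_0^-)>0$. By shrinking $\Omega^-$, the integrand of $I(W,f,\phi;s_0)$ becomes supported in an arbitrarily small neighborhood of $(u_0,g_0)$. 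Next, since $\pi$ is $\psi_N$-generic, the Whittaker model contains smooth functions, and by density there exists $W\in\c W_{\psi_N}(\pi)$ with $W(g_0)\ne0$. Similarly, $\phi\mapsto I_0(\phi,u_0g_0)$ is a partial Fourier transform evaluated at a single point in $\Mat_{4\times 2}$, hence is a nonzero linear functional on $\c S(\Mat_{4\times 2})$, so one can choose $\phi$ with $I_0(\phi,u_0g_0)\ne0$. After a further shrinking of $\Omega^-$, continuity forces the integrand to be a continuous function of fixed nonzero sign on the support, and so $I(W,f,\phi;s_0)\ne0$.

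The main obstacle I anticipate is the bookkeeping in the non-vanishing step: one must verify that the map $(u,g)\mapsto wug$ from $(U_Q^w\bs U_Q)\times(ZU_4\bs C_Q)$ actually meets the open cell $PU_P^-w$ in a set rich enough to accommodate a choice of $(u_0,g_0)$ with $g_0$ in the support of a reasonably chosen Whittaker function. If the most natural base point $(e,e)$ lies outside the open cell, one can circumvent this either by translating on the right by an element of $K$ (exploiting \eqref{eq: I = int of I1 over K}) or by shifting the Bruhat representative $w$ by a compact factor. The convergence step of (i) is routine once the compactness of the inner $u$-support is established, but that compactness itself also requires a careful look at how $w$ interacts with $U_Q$ and $U_P^-$, so this is the second technical point that must be checked in detail.
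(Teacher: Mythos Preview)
Your approach to part (i) contains a genuine error: the assertion that the integral converges absolutely for \emph{every} $s\in\C^2$ is false, even for simple $f$. Writing $g=g'u_1$ with $g'\in C_Q^w=C_Q\cap w^{-1}Pw$ and $u_1\in U(C_Q^w)^-$, the paper observes that conjugation by $w$ maps $(U_Q^w\bs U_Q)\times U(C_Q^w)^-$ isomorphically onto $U_P^-$, so the compact support of a simple $f$ does localize $(u,u_1)$ to a compact set. But for $g'\in C_Q^w$ one has $wg'w^{-1}\in P$, and the equivariance of $f$ gives
\[
f_{\chi;s}(wug'u_1)=(\chi;s)(wg'w^{-1})\cdot f_{\chi;s}\bigl(w(g'^{-1}ug')u_1\bigr).
\]
Thus the $g'$-integration over the noncompact quotient $ZU_5\bs C_Q^w$ carries the factor $(\chi;s)(wg'w^{-1})$, which on the torus is of the form $|t|^{c\cdot s}$ and is \emph{not} absorbed by Whittaker asymptotics or Schwartz decay: the resulting integral is a Mellin transform, convergent only for $\Re(s)$ in a half-plane and with genuine poles in general. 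The paper establishes the meromorphic continuation by reducing this residual integral $J$ to a succession of one-variable Mellin transforms (Tate integrals) and invoking their known continuation; your argument supplies no substitute for this step.

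The same structural issue undermines your part (ii). Shrinking $K_1$ localizes only $(u,u_1)$, not $g'\in C_Q^w$; after localization the integral collapses (up to a nonzero constant) to $J(R(u_1).W,\omega_\psi(uu_1).\phi;s_0)$, still an integral over the noncompact $ZU_5\bs C_Q^w$. Your continuity argument therefore does not reduce to evaluating $W$ and $I_0(\phi,\cdot)$ at a single point. The paper instead shows $J$ can be made nonzero via a cascade of reductions $J\to J'\to J''$, at each stage using Dixmier--Malliavin (convolution by smooth compactly supported functions on suitable unipotent subgroups) to manufacture an arbitrary test function that isolates one torus direction, until what remains is a one-dimensional Mellin transform of $W$ along $Z\bs T_6$. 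Vanishing of the latter for all data would force $W$ to vanish identically on $T_6$, contradicting nontriviality of the Whittaker model. You should replace the localization step with this layered Mellin/Dixmier--Malliavin argument.
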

\begin{proof}
We begin with some formal manipulations which are
valid over any local field.
The process requires many of the same subgroups which 
were defined during the proof of theorem \ref{thm: unfolding}, 
and we freely use notation from that section.
$$
I(W, f, \phi; s) 
= \int_{Z U_4\bs C_Q} \int_{U_Q^w \bs U_Q} 
\int_{\Mat_{1\times 2}} 
W(g) f(wug, s) [\omega_\psi(ug).\phi]\bpm r \\ I_2 \\ 0 \epm 
\, \ol \psi(r \cdot \bbm 1\\ 0 \ebm )\, dr\, du \, dg.
$$
Define $U_1(a,b,c)$ as in \eqref{eq:6}. Then
$$
 [\omega_\psi(ug).\phi]\bpm r \\ I_2 \\ 0 \epm 
 = [\omega_\psi(U_1(r_1, r_2, c) ug).\phi](\Xi_0),
$$
(for any $c$) where $\Xi_0:=\bspm 0 \\ I_2 \\ 0\espm$ and $r = (r_1\ r_2).$
Also $W(U_1(r_1, r_2, c)g) = \ol \psi(r_1) W(g) 
= \ol \psi(r \cdot \bbm 1\\ 0 \ebm ) W(g).$ 
Hence $$
I(W, f, \phi; s) 
= \int_{Z U_5\bs C_Q} \int_{U_Q^w \bs U_Q} 
W(g) f(wug, s) [\omega_\psi(ug).\phi](\Xi_0) 
du \, dg,
$$
where $U_5$ is the product of $U_6, U_2$ and the center $Z(U_1)$
of $U_1.$ 

Recall that $C_Q^w$ is a standard parabolic subgroup 
of $C_Q.$ Let $U(C_Q^w)^-$ denote the unipotent radical 
of the opposite parabolic.  
Then $C_Q^w \cdot U(C_Q^w)^-$ is a subset of full 
measure in $C_Q$ and we can factor the Haar measure on 
$C_Q$ as the product of (suitably normalized) left Haar measure
on $C_Q^w$ and Haar measure on $U(C_Q^w)^-.$
 Hence 
 $$
I(W, f, \phi; s) 
= \int_{Z U_5\bs C_Q^w}
\int_{U(C_Q^w)^-}
 \int_{U_Q^w \bs U_Q} 
W(gu_1) f(wugu_1, s) [\omega_\psi(ugu_1).\phi](\Xi_0) 
du \,du_1\, d_\ell g$$
Conjugating $g$ across $u,$ making a change of variables, 
and making use of the equivariance of $f$ yields 
$$
I(W,f, \phi; s) 
= \int_{U(C_Q^w)^-}
 \int_{U_Q^w \bs U_Q} 
 J( R(u_1).W, \omega_\psi( uu_1).\phi; s)
 f(wuu_1, s)\, du\, du_1.
$$
where 
$$
J(W, \phi, s) =  \int_{Z U_5\bs C_Q^w}W(g) [\omega_\psi(g).\phi](\Xi_0) 
\ \on{Jac}_1(g) (\chi; s)(wgw^{-1})\, d_\ell g.
$$
with 
$\on{Jac}_1(g)$ being the Jacobian of the change of variables in 
$u.$ 
Now conjugation by $w$ maps $U_Q^w \bs U_Q \times U(C_Q^w)$ 
isomorphically onto the unipotent radical $U_P^-$ of the parabolic 
opposite $P.$ Hence, if $\Phi$ is any smooth compactly 
supported function on $U_Q^w \bs U_Q \times U(C_Q^w)^-,$
then there is a flat section $f$ such that 
$f(wuu_1, s_0 ) = \Phi(u, u_1).$ 

We claim that the integral
$J(W, \phi; s)$ 
 converges provided $\Re(s_1-s_2)$ and $\Re(s_2)$ 
are both sufficiently large, and that
 $J( R(u_1).W, \omega_\psi( uu_1).\phi; s)$ 
extends meromorphically to $\C^2$ and is a continuous function 
of $uu_1$ away from the poles. 
Granted this claim, is clear that if the 
integral of $J( R(u_1).W, \omega_\psi( uu_1).\phi; s_0)$ 
against the arbitrary test function $ f(wuu_1, s_0)$ 
is always zero, then $J( W, \phi; s_0)$
is zero for all $W$ and $\phi.$ 

Now, $C_Q^w=(P_1\times P_2)^\circ$ is the intersection of $C_Q$ with the product
of the Klingen parabolic $P_1$ of $GSp_4$ and the Siegel parabolic $P_2$
of $GSO_4.$ Let $C'=(P_1\times M_2)^\circ$ 
denote the subgroup of elements
whose $GSO_4$ component lies in the Levi, and let $U_5'=C' \cap U_5=U_6Z(U_1).$ Then $C'$ surjects onto $ZU_5 \bs C_Q^w,$ which is thus 
canonically identified with $ZU_5' \bs C'.$ 
Expressing the measure on $C_Q^w$ in terms of Haar measures
on $U_1, U_2$ and $(M_1\times M_2)^\circ,$ 
and then identifying $Z(U_1) \bs U_1$ with $\Mat_{2\times 1}(F)$
via the map $\bar u_1(\bbm r_1 & r_2\ebm) = U_1(r_1, r_2, 0),$ 
yields the following expression for $J(W, \phi; s)$:
\begin{equation}\label{eq: first alt exprn for J(W, phi; s)}
 \int\limits_{\Mat_{1\times 2}} \int\limits_{(M_1\times M_2)^\circ}
 W(\bar u_1(r) m) [\omega_\psi(\bar u_1(r)m).\phi](\Xi_0) 
\ \on{Jac}_1(m) (\chi; s)(wmw^{-1})\, 
\delta_{C_Q^w}^{-1}(m) 
dm\, dr,
\end{equation}
where  
$\delta_{C_Q^w},$ 
is the modular quasicharacter.

Now, elements
of $C'$ map under $j$ into the Siegel Levi of $Sp_{16}.$ So that
$$
[\omega_\psi\circ j(c').\phi](\xi) = |\det c'|^{\frac 12}
\phi( \xi\cdot c'), 
$$
where $\cdot$ is the rational right action of $C'$ on $\Mat_{4\times 2}$
by 
$$
 \xi\cdot m_Q^1( g_1, g_2) = g_1^{-1} \xi g_2.
$$
(with $m_Q^1$ as in \eqref{eq: def mQ1(g1, g2)}).
The stabilizer of the matrix $\Xi_0$
is precisely the group 
$M_5$ introduced in the proof of theorem \ref{thm: unfolding}.

In \eqref{eq: first alt exprn for J(W, phi; s)}, 
conjugate $m$ across $\bar u_1(r),$  make a change of 
variables in $r,$ and let $\Jac_2(m)$ 
denote the Jacobian.
Define 
$$
\mu_s(m) 
= (\chi;s )(wmw^{-1}) 
\delta_{C_Q^w}^{-1}(m) 
|\det m|^{\frac 12}
\Jac_1(m) 
\Jac_2(m).
$$
Then 
replace $m$ by $m_5 m_5'(g)$ where $m_5 \in M_5$ and 
$m_5'(g) =  m(1, I_2, g),$
(with $m$ as in \eqref{m: GL2 x GL2 x GL1 -> M1 x M2 circ}).
Observe that 
$$
\Xi_0 \cdot m_5 m_5'(g) \bar u_1(r) 
= \bpm r \cdot g \\ g \\ 0 \epm.
$$
Hence if $x(g,r) = m_5'(g) \bar u_1(r g^{-1}),$ then 
\begin{equation}
\label{eq: J(W, phi, s)}
J(W, \phi; s)=
 \int\limits_{\Mat_{1\times 2}} 
 \int\limits_{GL_2}
J'(R(x(g,r)).W, s) 
 \phi\bpm r \\ g\\ 0 \epm 
\mu_s( m_5'(g)) |\det g|^{-1}
\, dg\, dr,
\end{equation}
\begin{equation}
\label{eq: J'(W,s)}
\text{ where } \qquad 
J'(W, s):= \int\limits_{ZU_6 \bs M_5}
 W(m_5) 
 \mu_s( m_5) 
\, dm_5.\end{equation}
Direct computation shows that
$\mu_s(m_5'(g)) = |\det g|^{s_2} \chi_2(\det g).$

Write  $M_5=U_6 T_5 K_5,$
where $T_5 = T \cap M_5$ and 
$K_5$ is the maximal compact subgroup of the $GL_2$ factor. 
Then
$$J'(W, s):= \int_{K_5}
\int_{Z \bs T_5}
 W(tk) 
 \mu_s(t)\delta_{B_5}^{-1}(t)  
$$
where $\delta_{B_5}$ is the modular quasicharacter of the 
standard Borel subgroup $B_5$ of $M_5.$ 
Set $t_6'(a) = \diag(a,1,1,a^{-1}, 1,1,1,1,a,1,1,a^{-1}),$
and write $t \in T_5$ as $t_6 t_6'(a)$ for 
$t_6 \in T_6$ and $a \in F^\times.$ 
Then 
$$J'(W, s) = \int_{K_5} 
\int_{F^\times}  J''( R(t_6'(a)k).W, s) \mu_s(t_6'(a)) \, dt,$$
$$\text{ where } 
J''(W, s) = \int_{Z\bs T_6} W(t_6)  \mu_s(t_6)\delta_{B_5}^{-1}(t_6)  
\, dt_6.$$
Observe that $J'(W,s)$ may be written formally as 
$$
\int_{M_6 \bs M_5} J''(R(g_1).W, s) 
\mu_s(g_1) dg_1.
$$
Also, direct computation 
shows that 
$\mu_s(t_6'(a)) = |a|^{s_1-s_2-4} \chi_1(a)/\chi_2(a).$

For $\phi_1$ a smooth function of compact support 
$F^2 \to \C$ let 
$$
[\phi_1*_1W](g) = \int_{F^2} W(g U_1(r_1,r_2,0)) \phi(r_1,r_2) \, dr.
$$
Observe that 
$$
[\phi_1*_1W](M_5(t,g_3)) = W(M_5(t,g_3))  
\widehat \phi_1( g_3^{-1} \cdot \bbm 1\\ 0 \ebm t \det g_3).
$$
Thus, by replacing $W$ by $\phi_1*_1 W$
(which is justified by \cite{Dixmier-Malliavin}) we may introduce 
what amounts to an arbitrary test function on $M_6\bs M_5.$ 
Hence 
$$
J'(W, s_0 ) = 0 \forall W \iff J''(W, s_0) = 0 \forall s_0.
$$
Similarly, if 
$$
[\phi_2*_2 W](g) := \int_{U_6} W(g u_6 ) \phi_2(u_6)\, du_6,\qquad 
\phi_2 \in C_c^\infty( U_6), 
$$ 
then 
$J''(\phi_2*_2W, s_0) = 0 \forall \phi_2\in C_c^\infty(U_6)$
if and only if $W$ vanishes identically on $T_6.$
In particular, if $J''(W,s_0)$ vanishes identically 
on $\whitt,$ then $\whitt$ is trivial-- a contradiction. 

This completes the formal arguments for proposition \ref{pro: nonvanishing}.
It remains to check the convergence and continuity statements 
made above. These will be proved based on facts about 
asymptotics of Whittaker functions and the Mellin transform 
\begin{equation}
\label{eq: mellin transform}
Z_{\xi, n}. \Phi(u) = \int_{F^\times} \Phi(x) \xi(x) (\log|x|)^n |x|^u \, d^\times x,
\end{equation}
where 
$\Phi \in \c S(F), \ \xi: F^\times \to \C, \text{ a character,}\ 
n \ge 0 \in \Z, \  
u \in \C.$
We recall some properties. 
\begin{pro}\label{pro: properties of the mellin transform}
Fix a character $\xi$ and a non-negative integer $n.$
\begin{enumerate}
\item There is  a real number $c$ depending 
on $\xi$ such that 
the integral defining $Z_{\xi, n}. \Phi$ converges 
absolutely and uniformly on 
 $\{ u \in \C : \Re(u) \ge c+\ve\}$ for all $\ve > 0$
 and all $\Phi \in \c S(F).$ 
\item There is a discrete subset $S_\xi$ of $\C$ such that 
$Z_{\xi, n}.\Phi$ extends meromorphically to all of $\C$ with
poles only at points in $S_\xi.$ Moreover, there is an 
integer $o_{\xi, n}$ such that no pole of $Z_{\xi, n}.\Phi$ 
has order exceeding $o_{\xi, n},$
for any $\Phi.$ 
\item If $F$ is archimedean, then $Z_{\xi, n}.\Phi = Q_\Phi( q^{u})$ for 
some rational function $Q.$
\item We have 
\begin{equation}
\label{Z n+1 = D.Zn}
Z_{\xi, n+1}.\Phi(u) = \frac{d}{du} Z_{\xi, n}.\Phi(u).
\end{equation}
\end{enumerate}
\end{pro}
\begin{proof} 
If $n= 0$ then the first three parts are proved in \cite{tatesthesis}.
Convergence for $n>0$ is straightforward, 
since $\log |x|$ grows slower than any positive power of $|x|$ at infinity, 
and slower than any negative power as $|x| \to 0.$ 
Equation \eqref{Z n+1 = D.Zn} is clear in the domain of convergence, 
and follows elsewhere by continuation.
The first three parts for general $n$ then follow.
\end{proof}
Next, we need a version of the 
expansion \eqref{W= sum x phi}.
Specifically, if we replace $W(t)$ by $W(tk)$ 
then each $\Phi_x$ in \eqref{W= sum x phi}
will be in $\c S(F^4 \times K).$
(See  \cite{Soudry-Memoir}, especially the remark on p.20.)

Let us now consider the convergence issues raised by our 
formal computations more carefully.
Recall that  $I(W, f, \phi; s)$ was initially 
 expressed as an integral 
over $U_Q^w \bs U_Q \times ZU_5 \bs C_Q.$ 
In the course of our arguments, we have expressed it 
as an iterated integral over
$$
(U_Q^w \bs U_Q \times U(C_Q^w)^-)\times 
(F^2 \times GL_2)
\times (F^\times \times K_5)
\times Z\bs T_6.
$$
In order to perform the integration on $Z\bs T_6$ we may identify with 
with $\{ \bar t_6(a) = \diag(1,1,a^{-1},a^{-1},1,a^{-1},1,a^{-1},1,1,a^{-1},a^{-1}), a \in F^\times\}.$
Then
$\mu_s( \bar t_6(a))= |a|^{\frac{s_1-s_2}2-2} \frac{\chi_1^2\chi_2}{\chi_3}(a).$

Now, the integral $J''(W,s)$ is the Mellin transform taken 
along the one dimensional torus we use to parametrize $Z\bs T_6.$
Its convergence and analytic continuation follow 
directly from proposition \ref{pro: properties of the mellin transform}
and 
\eqref{W= sum x phi}.
Now consider  
$J''(R(t_6'(a)k).W, s)$  with $a \in F^\times, k \in K_5.$
We claim that it is  smooth 
and of rapid decay in $a.$
In the domain of absolute convergence, this 
is easily seen.
For $s$ outside of the domain of convergence, 
we use \eqref{Z n+1 = D.Zn} to 
pass to the Mellin transform 
of a suitable derivative at a point {\it inside} 
the domain of convergence. 
To get $J'(W,s)$ we integrate $k$ over the compact set $k$
and take  another
Mellin transform in the variable $a.$ 
This of course converges absolutely,
and by similar reasoning, we see that
$J'(R(x(g,r).W), s)$ is smooth.
Now, set $g$ equal to $\bspm t_1& b \\ 0 & t_2 \espm k$ and 
consider the integral 
$$
\int\limits_{\Mat_{1\times 2}}
\int\limits_{F^\times}
\int\limits_{F^\times}
\int\limits_{F}
\int\limits_{K_{GL_2}}
J'( R(x(g , r)).W, s)
\phi\bpm r\\ g\\ 0 \epm 
\chi_2( t_1 t_2)
|t_1|^{s_4}
|t_2|^{s_5}
\, dk \, db\, dr \, d^\times t_1 \, d^\times t_2,
$$
where $s_4$ and $s_5$ are two more complex
variables, and $K_{GL_2}$ is the standard maximal compact subgroup 
of $GL_2.$ The integrals on 
$k, r$ and $b$ converge absolutely and 
uniformly because $K_{GL_2}$ is compact 
and $\phi$ is Schwartz-Bruhat. 
The integrals on 
$t_1$ and $t_2$  take two more 
Mellin transforms, yielding a meromorphic
function of four complex variables. The restriction 
to a suitable two-dimensional subspace of $\C^4$
 is $J(W, \phi, s).$ 
Moreover, $J(R(u_1).W, \omega_\psi(uu_1).\phi, s)$ 
remains continuous in $u_1 \in U(C_Q^w)^-$ and
$u \in U_Q^w \bs U_Q,$ which completes the proof. 
\end{proof}

\section{Global Identity}\label{section: global identity}
We now return to the global situation.  Thus $F$ is again a number field with adele ring $\A,$ while 
and $\psi_N,$ and $\Flat(\chi),$ 
are defined as in section \ref{sec: global}
and \ref{sec: notation}, respectively.
 In addition, let 
$\pi = \otimes_v \pi_v$ be an irreducible,
globally $\psi_N$-generic
cuspidal automorphic representation 
of $GSp_4(\A) \times GSO_4(\A),$ 
with normalized central character $\omega_\pi,$
and $\varphi$ be a 
cusp form from the space of $\pi,$ 
etc.

For $r$ a representation of $^LG$ define 
$L(u, \pi, r\times \eta)$ to be the twisted $L$ 
function.  Thus at an unramified place 
$v$ 
the local factor is 
$$
L_v( u, \pi_v, r \times \eta_v)
= \det( I - q^{-u} \eta_v(\f w_v) r( \tau_{\pi_v}))^{-1},
$$
where $\f w_v$ is a uniformizer, 
$q_v$ is the cardinality of the residue class field, 
$\tau_{\pi_v}$ is the semisimple conjugacy class attached to $\pi_v,$ and $\eta_v$  is the local 
component of $\eta$ at $v.$

\begin{thm}\label{thm:global id}
Suppose that $f_\chi=\prod_{v}f_{\chi_v}\in \Flat(\chi), \phi=\prod_v \phi_v\in \c S(\Mat_{4\times 2}(\A))$ 
and $W_{\varphi} = \prod_v W_v$ (the Whittaker function of $\pi$ as in theorem \ref{thm: unfolding}
are factorizable. 
Let 
$I(f_{\chi_v;s}, W_v, \phi_v)$ 
be the local zeta integral, defined as in 
\eqref{eq: local zeta integral def},
and let $S$ be a finite set of places $v$
and all data is unramified for all $v\notin S.$
Then for $\Re(s_1-s_2)$ and $\Re(s_2)$ 
both sufficiently large,
the global integral 
$I(f_{\chi;s}, \varphi, \phi),$
defined as in \eqref{eq: global integral definition}, is equal to 
 $$\frac{L^S(\frac{s_1-s_2}2-1, \pi, St_{GSp_4}^\vee
\otimes St_{GL_2^{(1)}} \times \frac{\chi_3}{\chi_1 \chi_2^2}) 
L^S(\frac{s_1+s_2}2-2, \pi, St_{GSp_4}^\vee
\otimes St_{GL_2^{(2)}} \times \frac{\chi_3}{\chi_1 \chi_2})}{N^S(s,\chi)}
$$
times 
$$
\prod_{v\in S} I(f_{\chi_v}, W_v, \phi_v), 
$$
where $N^S(s, \chi)$ 
is the product of partial zeta functions 
corresponding to \eqref{eq:N(s, chi)}
\end{thm}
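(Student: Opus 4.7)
The plan is to deduce the theorem by combining the unfolding result (Theorem \ref{thm: unfolding}) with the local unramified computation (Theorem \ref{thm:  main local}). First, by Theorem \ref{thm: unfolding}, in the region of absolute convergence we have
\begin{equation*}
I(f_{\chi;s},\varphi,\phi) = \int_{Z(\A)U_4(\A)\bs C_Q(\A)} W_\vph(g) \int_{U_Q^w(\A) \bs U_Q(\A)} f_{\chi;s}(wug)\, I_0(\phi, ug)\, du\, dg.
\end{equation*}
The key observation is that each factor in the inner integrand is factorizable: by the uniqueness of the $\psi_N$-Whittaker model the global Whittaker function $W_\vph$ decomposes as $\prod_v W_v$ for a suitable choice of local Whittaker vectors; the flat section $f_{\chi;s}$ is a tensor product by hypothesis; and the function $I_0(\phi, \cdot)$ is factorizable because $\phi = \prod_v \phi_v$ and because the formulas \eqref{eq: Weil rep fmlas} for $\omega_{\psi,l}$ are compatible with the decomposition of the Weil representation into local factors.

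Second, I would justify interchanging the integration with the Euler product. In the region where $\Re(s_1-s_2)$ and $\Re(s_2)$ are sufficiently large, Theorem \ref{thm: convergence} shows that each local zeta integral $I(W_v, f_{\chi_v;s}, \phi_v; s)$ converges absolutely, and the proof there gives uniform bounds (majorizing the local integrand by a spherical section). Combined with the standard estimates for the partial $L$-factors of $\pi$ used to control the Euler product at unramified places, this allows the global integral to be written as
\begin{equation*}
I(f_{\chi;s},\varphi,\phi) = \prod_v I(W_v, f_{\chi_v;s}, \phi_v; s),
\end{equation*}
where each local factor is given by \eqref{eq: local zeta integral def}.

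Third, I would invoke Theorem \ref{thm:  main local} at every place $v \notin S$. For each such $v$, the local zeta integral equals
\begin{equation*}
\frac{L_v(\tfrac{s_1-s_2}2-1, \pi_v, St_{GSp_4}^\vee\otimes St_{GL_2^{(1)}} \times \tfrac{\chi_3}{\chi_1 \chi_2^2})\, L_v(\tfrac{s_1+s_2}2-2, \pi_v, St_{GSp_4}^\vee\otimes St_{GL_2^{(2)}} \times \tfrac{\chi_3}{\chi_1 \chi_2})}{N_v(s,\chi)}.
\end{equation*}
Taking the product over all $v \notin S$ produces the stated ratio of partial $L$-functions $L^S/N^S$, while the remaining factors for $v \in S$ are assembled into $\prod_{v \in S} I(f_{\chi_v}, W_v, \phi_v)$.

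The main obstacle is the justification of the Euler product factorization: one must ensure that the passage from the global integral to an infinite product of local integrals is valid in some common region of convergence, and then that the Euler product of the resulting unramified factors converges to the claimed ratio of partial $L$-functions in that same region. This requires combining the convergence bounds from Theorem \ref{thm: convergence} at each $v \in S$ with the absolute convergence of the infinite product $\prod_{v \notin S} I(W_v^\circ, f_v^\circ, \phi_v^\circ; s)$, which in turn follows from the explicit formula of Theorem \ref{thm:  main local} together with standard convergence properties of the relevant Langlands $L$-functions in a right half plane. Once this is established, the identity follows formally, and its meromorphic continuation to all of $\C^2$ then follows from the meromorphic continuation of both sides.
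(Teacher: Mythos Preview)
Your proposal is correct and follows essentially the same route as the paper: unfold via Theorem~\ref{thm: unfolding}, factor the resulting integral into a product of local zeta integrals using the factorizability of $W_\vph$, $f_{\chi;s}$, and $\phi$, then apply Theorem~\ref{thm:  main local} at each $v\notin S$; the paper justifies the Euler product step by citing the bound in \cite{langlands-eulerproducts} together with the integration results in \cite{tatesthesis}, which is exactly the ``standard convergence properties'' you invoke. Your final remark about meromorphic continuation is extraneous, since the theorem as stated only asserts the identity in the region of absolute convergence.
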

\begin{rem}
Let $\eta_1$ and $\eta_2$ be any two characters
of $F^\times \bs \A^\times.$  Fix $\pi$ 
and let $\omega_\pi$ be its central 
character.  Then the system
$$
\frac{\chi_1^3 \chi_2^3}{\chi_3^2} = \omega_\pi, 
\qquad 
\frac{\chi_3}{\chi_1 \chi_2^2} = \eta_1, 
\qquad
\frac{\chi_3}{\chi_1 \chi_2} = \eta_2
$$
has a unique solution.
If $\eta_1 = \eta_2$ is trivial, then 
it is given by $\chi_1 =\chi_3 = \omega_\pi$ and 
$\chi_2 \equiv 1.$
\end{rem}
\begin{proof}
It follows from theorem \ref{thm:  main local}
the bound obtained in 
\cite{langlands-eulerproducts}
that for any cuspidal representation $\pi= \otimes_v \pi_v$ of $GSp_4(\A) \times GSO_4(\A)$ the infinite product
$\prod_{v\in S}  I( f_{\chi_v}, W_v, \phi_v)$ is convergent 
for $\Re(s_1-s_2)$ and $\Re(s_2)$ sufficiently large.
It then follows from theorem \ref{thm: unfolding}, and the basic results on integration 
over restricted direct products presented in 
\cite{tatesthesis}
that 
$$
I(f_{\chi;s}, \varphi, \phi) 
= \prod_v I( f_{\chi_v;s}, W_v, \phi_v),
$$
which, in conjunction with theorem \ref{thm:  main local} again gives the result.
\end{proof}
\begin{cor}Let $\pi_v$ be the local 
constituent at $v$ of a 
cuspidal representation $\pi.$ Then 
the local zeta integral 
$I_v(W_v, f_v, \phi_v; s)$ has meromorphic continuation to $\C^2$ 
for any $W_v, f_v$ and $\phi_v.$
\end{cor}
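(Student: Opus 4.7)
The plan is to deduce the corollary from the global identity in Theorem \ref{thm:global id} together with the meromorphic continuation and non-vanishing statement for simple sections in Proposition \ref{pro: nonvanishing}, via a standard ``quotient of meromorphic functions'' trick.

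First, I embed $\pi_v$ as the local constituent at $v$ of a globally $\psi_N$-generic cuspidal automorphic representation $\pi = \otimes_{v'} \pi_{v'}$ of $C_Q(\A)$ (such a global $\pi$ exists by a standard Poincar\'e series argument). Fix the local data $W_v, f_v, \phi_v$ at $v$ and an arbitrary point $s_0 \in \C^2$. I choose global factorizable data $W_\varphi = \prod_{v'} W_{v'}$, $f_\chi = \prod_{v'} f_{\chi_{v'}}$, $\phi = \prod_{v'} \phi_{v'}$ whose $v$-components agree with the given data, and whose components at $v' \ne v$ are selected as follows. At almost all places $v'$, I take spherical data, so that by Theorem \ref{thm: main local} the local factor is an explicit meromorphic ratio of $L$-values. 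At the remaining finitely many places $v' \in S \ssm \{v\}$ (containing all ramified places and, if $v$ is finite, all archimedean places), I use Proposition \ref{pro: nonvanishing} to choose simple sections satisfying $I_{v'}(W_{v'}, f_{v'}, \phi_{v'}; s_0) \ne 0$.

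Next, Theorem \ref{thm:global id} gives, in the domain of absolute convergence,
\[
I(f_{\chi;s}, \varphi, \phi)
\;=\; \frac{L^S(\tfrac{s_1-s_2}{2}-1,\pi,\cdots)\,L^S(\tfrac{s_1+s_2}{2}-2,\pi,\cdots)}{N^S(s,\chi)}\prod_{v' \in S} I_{v'}(W_{v'}, f_{v'}, \phi_{v'}; s),
\]
and solving for the factor at $v$ yields
\[
I_v(W_v, f_v, \phi_v; s)
= \frac{I(f_{\chi;s}, \varphi, \phi)\,N^S(s,\chi)}{L^S(\tfrac{s_1-s_2}{2}-1,\pi,\cdots)\,L^S(\tfrac{s_1+s_2}{2}-2,\pi,\cdots)\prod_{v' \in S \ssm \{v\}} I_{v'}(W_{v'}, f_{v'}, \phi_{v'}; s)}.
\]
The numerator extends meromorphically to $\C^2$: the Eisenstein series $E(f_{\chi;s}, g)$ has meromorphic continuation in $s$ by the general theory of \cite{MW-EisensteinSeriesBook}, the cusp forms $\varphi_1, \varphi_2$ are of rapid decay, and $N^S(s,\chi)$ is a product of Hecke $L$-values. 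The denominator is likewise meromorphic on $\C^2$, since the partial $L$-functions are meromorphic and each of the finitely many local factors at $v' \in S \ssm \{v\}$ extends meromorphically to $\C^2$ by Proposition \ref{pro: nonvanishing}. By our choice of simple sections, the denominator is nonzero at $s_0$. Hence the left-hand side, which is \emph{a priori} defined only in the region of convergence, agrees there with a meromorphic function defined on a neighborhood of $s_0$, and therefore extends meromorphically to that neighborhood.

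Since $s_0 \in \C^2$ was arbitrary, the corollary follows. The main technical point is to know Proposition \ref{pro: nonvanishing} in full strength: namely, simultaneous meromorphic continuation \emph{and} non-vanishing at a prescribed $s_0$, at every auxiliary ramified place (including archimedean ones). Once that is in hand, the argument reduces to the bookkeeping above.
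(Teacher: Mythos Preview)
Your argument is correct and is essentially the same globalization/division argument that the paper sketches: choose simple sections at the auxiliary bad places so that Proposition \ref{pro: nonvanishing} gives both meromorphic continuation and nonvanishing there, use the meromorphic continuation of the Eisenstein series for the global integral, and divide. One small redundancy: the hypothesis already hands you a global cuspidal $\pi$ with $\pi_v$ as its local component, so the Poincar\'e-series globalization step is unnecessary. Your write-up is also more explicit than the paper's about the fact that the unramified factors contribute the ratio $L^S/N^S$, whose meromorphic continuation must be known independently (it is, e.g.\ via Langlands--Shahidi for $GSp_4\times GL_2$); the paper's terse ``simple at every other place'' glosses over this point.
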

\begin{proof}
This follows from a globalization argument. 
We create a section of the global induced representation which is $f$ at one 
place and simple at every other place. 
Meromorphic continuation of the global zeta integral follows from 
that of the Eisenstein series. 
Having shown meromorphic continuation at every other place in proposition \ref{pro: nonvanishing}, we 
deduce it at the last place.
\end{proof}

\section{Application}\label{section: application}
In this section we give an application relating 
periods, poles of $L$ functions, and functorial 
lifting. The connection between 
$L$ functions and functorial lifting 
in this case was obtained in \cite{Asg-Sha}.

Let $\Pi$ be a globally generic cuspidal automorphic 
representation of $GSp_4,$ and let $\tau_1$ and $\tau_2$ be two 
cuspidal automorphic representations of $GL_2.$  Assume that $\Pi,$ 
$\tau_1$ and $\tau_2$ have the same 
central character. 
Then  $\tau_1 \otimes\tau_2$ may be regarded as a 
representation of $GSO_4$
via the realization of $GSO_4$ as a 
quotient of $GL_2 \times GL_2$ discussed in section \ref{section: unramified calculation}, 
and when $\Pi \otimes \tau_1 \otimes\tau_2$
is restricted to the group $C_Q$
(which we identify $C_Q$ with subgroup of 
$GSp_4 \times GSO_4$ as in section \ref{sec: notation} its central character is trivial.

  Now 
take $s_1$ and $s_2$ to be two complex numbers.  
Let $\chi_1 = \chi_2=\chi_3$ be trivial.  Consider the space
$\on{Flat}(\chi)$ of flat sections as in section \ref{sec: notation}.  Its elements are 
functions $\C^3 \times G(\A) \to \C,$ but we
regard each as a function $\C^2 \times G(\A) \to \C$ 
by pulling it back through the function 
$(s_1, s_2) \mapsto (s_1, s_2, \frac{3s_1+3s_2}{2}).$
Then theorem 
\ref{thm:global id} relates the global integral 
\eqref{eq: global integral definition}
with the product of $L$ functions
$$L^S( \frac{s_1-s_2-2}{2}, \wt \Pi\times \tau_1)
L^S( \frac{s_1+s_2-4}{2}, \wt\Pi\times \tau_2).$$

For $f \in \on{Flat}(\chi),$
let $$\b r(f,g) = \lim_{s\to 
(5,1)
} \left( \frac{s_1+s_2-4}{2} -1 \right) \left( \frac{s_1-s_2-2}{2}-1\right)E(f_{\chi;s},g)
$$
be the residue of the Eisenstein series at $(5,1).$ 
As $f$ varies we obtain a residual automorphic representation 
which we denote $\c R.$ Given 
$\b r \in \c R$ and $\phi \in \c S(\Mat_{4\times 2}(\A)),$
we define the Fourier coefficient 
$\b r^{\theta(\phi)}$ 
exactly as in \ref{eq: FC of E}.
Varying $\b r$ and $\phi$ we obtain a space 
of smooth, $K$-finite  functions
of moderate growth $Z(\A) C_Q(F)\bs C_Q(\A) \to \C.$
We denote this space $FC(\c R).$ 
Write $V_{\Pi}$ for the space of the representation $\Pi$
and $V_\tau$ for that of $\tau.$ 
Then, define the period
$\c P: V_{\Pi} \times V_\tau \times FC(\c R) \to \C,$
by the formula
$$
\c P(\varphi_{\Pi}, \varphi_\tau, \b r^{\theta(\phi)} )
= \int_{Z\bs C_Q} 
\b r^{\theta(\phi)}(g) \varphi_\Pi(g_1) \varphi_\tau(g_2) \, dg.
$$

\begin{thm}
First suppose that $\tau_1 \ne \tau_2.$ 
 Then the following are equivalent: 
\begin{enumerate}
\item  $L^S(s, \wt \Pi\times \tau_1)$ and 
$L^S(s, \wt\Pi \times \tau_2)$ have poles at $s=1.$ 
\item $\Pi$ is the weak lift of $\tau_1\times \tau_2$ 
\item the period $\c P$ does not vanish 
identically on $V_\Pi \times V_\tau\times FC(\c R).$
\end{enumerate}
Similarly, if $\tau_1=\tau_2,$ then the following are equivalent:
\begin{enumerate}
\item $L^S(s, \Pi\times \tau_1)$ has a pole at $s=1.$ 
\item $\wt\Pi$ is the weak lift of $\tau_1\times \tau'$
for some cuspidal representation $\tau'$ of $GL_2(\A),$  
\item the period $\c P$ does not vanish 
identically on $V_\Pi \times V_\tau\times FC(\c R).$
\end{enumerate}
\end{thm}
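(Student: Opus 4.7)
The plan is to deduce the theorem from the integral identity of Theorem \ref{thm:global id} by a double residue at $(s_1, s_2) = (5,1)$, combined with the Asgari--Shahidi characterization of weak lifts from $GSO_4$ to $GSp_4$.

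First I handle (1) $\Leftrightarrow$ (2). When $\tau_1 \ne \tau_2$, this is exactly the main theorem of \cite{Asg-Sha}: the simultaneous pole at $s=1$ of the two twisted spinor $L$-functions characterizes $\Pi$ as the weak endoscopic lift of $\tau_1 \otimes \tau_2$ along $GSpin_4(\C) \hookrightarrow GSpin_5(\C) = GSp_4(\C)$. When $\tau_1 = \tau_2 = \tau$, a pole at $s=1$ of $L^S(s, \wt\Pi \times \tau)$ forces $\wt\Pi_v$ to share a component with $\tau_v$ at almost every place, and the classification of generic cuspidal representations of $GSp_4$ via functoriality then forces $\wt\Pi$ to be a weak lift from some $\tau \otimes \tau'$.

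Next, for (1) $\Leftrightarrow$ (3) I take the double residue of both sides of the identity in Theorem \ref{thm:global id} at $(s_1, s_2) = (5,1)$. The shifts $\tfrac{s_1-s_2-2}{2}$ and $\tfrac{s_1+s_2-4}{2}$ both equal $1$ at this point, so the double residue picks out the product of residues of the two $L$-functions. On the right hand side, I verify that the normalizing product $N^S(s,\chi)$ with $\chi_1 = \chi_2 = \chi_3 = 1$ is holomorphic and nonvanishing at $(5,1)$ by direct inspection of \eqref{eq:N(s, chi)}, and that the local zeta integrals at places in $S$ extend holomorphically to this point via Proposition \ref{pro:extension to (5,1)}. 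On the left hand side, the moderate growth of the Eisenstein series and the rapid decay of $\varphi_\Pi$ and $\varphi_\tau$ let me pass the double residue through the integration over $Z(\A)C_Q(F)\backslash C_Q(\A)$ and through the Fourier--Jacobi coefficient, yielding
\begin{equation*}
\lim_{s\to(5,1)} \left(\tfrac{s_1-s_2-2}{2}-1\right)\left(\tfrac{s_1+s_2-4}{2}-1\right) I(f_{\chi;s},\varphi,\phi) = \c P(\varphi_\Pi, \varphi_\tau, \b r^{\theta(\phi)})
\end{equation*}
up to a nonzero constant. From this, (3) $\Rightarrow$ (1) is immediate: if the period is nonzero for some data, the right hand side of the residue identity must be nonzero, which forces both $L$-functions to have poles. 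For (1) $\Rightarrow$ (3), given simultaneous poles on the $L$-function side, I adapt Proposition \ref{pro: nonvanishing} to choose simple local data at every place in $S$ so that each local zeta integral is nonzero at $(5,1)$, forcing the global residue, and hence the period, to be nonzero for some choice of $f$, $\varphi$, $\phi$.

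The main obstacle I expect is the local nonvanishing step: Proposition \ref{pro: nonvanishing} guarantees that at any \emph{a priori chosen} $s_0$ the local integral can be arranged to be nonzero using simple sections, but to close the argument I need this nonvanishing specifically at $(5,1)$. Ensuring that the simple-section construction survives the analytic continuation past the domain of absolute convergence and is compatible with the Schwartz data on $\Mat_{4\times 2}$ at ramified places will require combining Proposition \ref{pro: nonvanishing} with the holomorphic continuation of Proposition \ref{pro:extension to (5,1)}. A secondary subtlety is the degenerate case $\tau_1 = \tau_2$: condition (1) is phrased as a simple pole of a single $L$-function, while the integral identity produces its square, and one must record that a double pole of the square corresponds precisely to a simple pole of the original $L$-function, so the same residue procedure still isolates the correct analytic information.
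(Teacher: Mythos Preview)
Your proposal is correct and follows essentially the same approach as the paper: invoke \cite{Asg-Sha} for (1)\,$\Leftrightarrow$\,(2), then for (1)\,$\Leftrightarrow$\,(3) pass the double residue at $(5,1)$ through the global identity of Theorem~\ref{thm:global id}, using Proposition~\ref{pro:extension to (5,1)} for local holomorphy and Proposition~\ref{pro: nonvanishing} for local nonvanishing at the ramified places. The one ingredient you should make explicit is that the hypotheses of Proposition~\ref{pro:extension to (5,1)} actually hold at $(5,1)$: this requires $\theta_4+\theta_2<1$, and the paper secures this by citing the bounds $H(15/34)$ for $\Pi$ and $H(1/9)$ for $\tau_i$ established in \cite{Asg-Sha}.
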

\begin{proof}
The relationship 
between poles and the similitude theta correspondence
was established 
in \cite{Asg-Sha}.
What is new here is the period condition, which follows from our
earlier results. Indeed, 
for $f \in \Flat(\chi), \phi \in \c S(\Mat_{4\times 2}(\A))$
$\varphi_\Pi \in V_\Pi$ and $\varphi_\tau \in V_\tau,$ 
the period 
$\c P(\varphi_\Pi, \varphi_\tau, R(f)^{\theta(\phi)})$
vanishes if and only if $$\lim_{s\to 
(5,1)
} \left( \frac{s_1+s_2-4}{2} -1 \right) \left( \frac{s_1-s_2-2}{2}-1\right)I(f_{\chi; s} , \varphi_\Pi, \varphi_\tau, \phi) \ne 0.$$
By \cite{Asg-Sha}, the local components of $\Pi$
all satisfy $H(15/34)$ and the local components of $\tau$
all satisfy $H(1/9).$ 
Hence each ramified local zeta integral 
is holomorphic at $(5,1)$ 
by proposition \ref{pro:extension to (5,1)}.
Moreover, by proposition \ref{pro: nonvanishing},
each ramified local zeta integral 
is nonzero at $(5,1)$ for a suitable choice of data.
The result follows.
\end{proof}
\section{A similar integral on $GSO_{18}$}
\label{section: GSO18}
In this section we consider the global integral \eqref{eq: global integral definition} in the case $n=3.$
  Our 
unfolding does not produce an integral
involving the Whittaker functions attached to 
our cusp forms, but it does reveal another 
intriguing connection with the theta correspondence. 

As before, the space of double cosets $P \bs GSO_{18} / R_Q$
is represented by elements of the Weyl group, and 
$$
I(f_{\chi;s}, \vph,  \phi) = \sum_{w\in P \bs GSO_{18} / R_Q}
I_w(f_{\chi;s}, \vph,  \phi),\qquad\text{ where }
$$
$$
I_w(f_{\chi;s}, \vph,  \phi)
= \il{C_Q^{w}(F) \bs C_Q(\A)} 
\vph(g) \il{U_Q^{w}(\A) \bs U_Q(\A) }
f_{\chi;s}(wu_2 g) \iq{U_Q^{w}}
\theta(\phi, u_1u_2g) \, du_1 \, du_2\, dg,
$$
which is zero if $\psi_l \big|_{Z(U_Q) \cap U_Q^w}$ is nontrivial, or 
if some parabolic subgroup of $C_Q$ stabilizes the flag
$0 \subset \ol U_Q^w \subset (\ol U_Q^w)^\perp$ in 
$U_Q/Z(U_Q),$ where $\ol U_Q^w$ is the image of $U_Q^w$ and 
$(\ol U_Q^w)^\perp$ is its perp space relative to the symplectic
form defined by composing 
$l:Z(U_Q) \to \G_a$ with the commutator map $U_Q/Z(U_Q)\to Z(U_Q).$

\begin{lem}
Let $w_\ell$ denote the longest 
element of the Weyl group of $GSO_{18},$ 
let $w_1$ be the shortest element of
$(W \cap P) \cdot w_\ell\cdot (W \cap Q)$ and let 
$w_2 = w_1 \cdot w[32].$  Then 
$P w_2 R_Q$ is a Zariski open subset
of $GSO_{18}.$ 
\end{lem}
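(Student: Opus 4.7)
The plan is to first establish that $Pw_2Q$ is Zariski open in $G$, then promote this to $Pw_2R_Q$ via a dimension count on orbits.

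First I observe that $Pw_1Q$ is Zariski open in $G$: by construction $w_1$ represents the double coset of the longest Weyl element $w_\ell$, and $Pw_1Q = Pw_\ell Q$ contains the open Bruhat cell $Bw_\ell B$, hence has full dimension. Next, for $n=3$ the Levi of $Q$ is $M_Q\cong GL_6\times GSO_6$, obtained by removing only the simple root $\alpha_6$; in particular $\alpha_2$ and $\alpha_3$ both lie in the Levi root system, so $w[32]=w[3]w[2]\in W\cap M_Q\subset W\cap Q$. Consequently $Pw_2Q = Pw_1w[32]Q = Pw_1Q$ is still Zariski open.

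It therefore suffices to show $\dim Pw_2R_Q=\dim G$. Applying the fiber-dimension formula to the multiplication map $P\times R_Q\to G$, $(p,r)\mapsto pw_2r$, and combining with the analogous identity for $Pw_2Q$, the equality reduces to
\[
\dim(P\cap w_2Qw_2^{-1}) - \dim(P\cap w_2R_Qw_2^{-1}) = \dim Q - \dim R_Q = \dim(M_Q/C_Q).
\]
Equivalently, the natural map $w_2^{-1}Pw_2\cap Q \to Q/R_Q\cong M_Q/C_Q$ must have image of full dimension. Writing $L'$ for the image of $w_2^{-1}Pw_2\cap Q$ in $M_Q$, this is the statement that $L'$ has an open orbit on $M_Q/C_Q$. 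Since $C_Q$ is the stabilizer of the character $l$ under the natural $M_Q$-action, $M_Q/C_Q$ is identified with the open $M_Q$-orbit of $l$ in the dual of $Z(U_Q)$ (the exterior-square orbit twisted by the $GSO_6$-similitude), and the required condition becomes that $L'\cdot l$ is Zariski open in this orbit.

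To verify this, I would write $w_1$ as an explicit reduced word in the simple reflections, using the combinatorial description of shortest $P$-$Q$ double coset representatives from Section \ref{section:  description} adapted from $n=2$ to $n=3$, conjugate $P$ by $w_2=w_1\cdot w[32]$, intersect with $Q$ and project to $M_Q$ to read off $L'$ explicitly inside $GL_6\times GSO_6$, and then check directly, for instance by exhibiting enough tangent directions at $l$ via the infinitesimal action, that $L'\cdot l$ has the full dimension of $M_Q\cdot l$. The principal obstacle is precisely this Weyl-combinatorial bookkeeping: the factor $w[32]$ is inserted specifically to shift $l$ off a boundary stratum and into the open $L'$-orbit, and verifying this alignment is the concrete heart of the argument.
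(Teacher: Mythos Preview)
The paper states this lemma without proof: Section~\ref{section: GSO18} is explicitly a sketch (``we briefly describe what happens in the case $n=3,$ omitting details''), and the lemma is immediately followed by the next proposition with no intervening argument. So there is no paper proof to compare against.

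Your reduction is sound. The observation that $w[32]\in W\cap M_Q$ (since for $n=3$ the Levi $M_Q$ is obtained by deleting $\alpha_6$, so $\alpha_2,\alpha_3$ remain) gives $Pw_2Q=Pw_1Q$, which is the open $P$--$Q$ double coset. The dimension count correctly reduces the openness of $Pw_2R_Q$ to the statement that the image of $w_2^{-1}Pw_2\cap Q$ in $M_Q$ acts with an open orbit on $M_Q/C_Q$, i.e., on the $M_Q$-orbit of the character $l$.

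The genuine gap is that you do not carry out this last verification. Your final paragraph is a plan (``I would write $w_1$ as an explicit reduced word \dots\ and then check directly''), not an argument. This computation is exactly the content of the lemma: the reason $w_2=w_1\cdot w[32]$ rather than $w_1$ appears is that for $w_1$ the corresponding orbit on $M_Q/C_Q$ is \emph{not} open, and one must check that the specific twist by $w[32]$ repairs this. Without exhibiting $w_2^{-1}Pw_2\cap M_Q$ concretely inside $GL_6\times GSO_6$ and verifying that its orbit through $l$ has the right dimension (or, equivalently, producing an explicit element of $Pw_2R_Q$ lying in the open Bruhat cell), the proof is incomplete. Everything up to that point is correct bookkeeping; the substance is the omitted calculation.
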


\begin{pro}
$I_w(f_{\chi;s}, \vph,  \phi)$ is zero unless $w$ is in the open 
double coset.
\end{pro}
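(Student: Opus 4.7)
The plan is to adapt the argument of Proposition 4.2 to the $GSO_{18}$ setting. The excerpt itself records the two vanishing criteria that each $I_w(f_{\chi;s},\vph,\phi)$ must satisfy: either the character $\psi_l$ restricted to $Z(U_Q)\cap U_Q^w$ is nontrivial, in which case integration of $\theta(\phi,\,\cdot\,)$ over $[U_Q^w]$ kills the integrand via the action of the Heisenberg center; or else some parabolic subgroup of $C_Q$ stabilizes the flag $0\subset\ol U_Q^w\subset(\ol U_Q^w)^\perp$ in $U_Q/Z(U_Q)$, in which case Lemma 2.2 combined with cuspidality of $\vph_1\vph_2$ gives vanishing. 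The task is therefore to verify, coset by coset, that every $w\in P\backslash GSO_{18}/R_Q$ other than the one containing $w_2$ falls under one of these two criteria.

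First I would enumerate the double cosets by mimicking Section 3.1. Each double coset has a canonical representative of the form $w=\sigma\nu$, where $\sigma$ is the shortest element of its $(W\cap P,W\cap Q)$-double coset (equivalently, a permutation of $\{1,\dots,18\}$ satisfying the obvious analogues of the four bullet conditions on p. 11, now with block sizes $3,6,6,3$ on the $P$ side and $6,6,6$ on the $Q$ side), and $\nu$ ranges over representatives of $P_\sigma\backslash GL_6/GSp_6$, with $P_\sigma=GL_6\cap\sigma^{-1}P\sigma$. The set $P_\sigma\backslash GL_6/GSp_6$ can be understood by the same covering trick used in Lemma 3.2, applied this time to $GSpin_6\to GL_6$ and $GSpin_6\to SO_6$, which reduces it to the computation of $P'_\sigma$-orbits on a single $SO_6$-orbit exactly as in Lemma 3.3.

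Next I would apply criterion (i) to trim the list. For each $\nu$ from the table one computes $\nu\cdot\psi_l$ on $U_Q$ explicitly (as in the displayed equation at the top of the proof of Proposition 4.2) and asks for which $\sigma$ this character is trivial on $U_Q\cap\sigma^{-1}P\sigma$; in particular any $\sigma$ with $\sigma(i)<\sigma(9+i)$ for some $i\in\{1,2,3\}$ is eliminated immediately. This should cut the list down to a handful of candidates. For the survivors of (i), I would turn to criterion (ii): the image $V$ of $U_Q^w$ in the Heisenberg quotient $U_Q/Z(U_Q)$ is isotropic by (i), and I must exhibit, for each such candidate other than the one containing $w_2$, a unipotent subgroup of $C_Q$ sitting inside $P_V^1\cap C_Q$ which is the unipotent radical of a proper parabolic of $C_Q$; then Lemma 2.2 plus cuspidality forces $I_w=0$. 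The open representative $w_2$ will be singled out as the one for which $V$ is so large that $P_V^1\cap C_Q$ contains no such unipotent radical.

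The main obstacle will be combinatorial: the number of candidate $\sigma$'s is considerably larger than the $25$ in the $GSO_{12}$ case, and for each surviving non-open representative one must produce an explicit root subgroup of $(GSp_6\times GSO_6)^\circ$ lying in $P_V^1$. A careful labelling of simple roots of $C_Q$, together with a case table analogous to Lemma 3.3, should reduce the argument to a routine (if tedious) check; the only conceptual content beyond Proposition 4.2 is the identification of the open cell, which is handled by the preceding lemma on $w_\ell$, $w_1$, and $w_2$.
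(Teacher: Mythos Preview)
The paper gives no proof here: the introduction announces that this section will ``briefly describe what happens in the case $n=3$, omitting details,'' and indeed neither this proposition nor the preceding lemma carries a proof. Your overall plan---enumerate $P\backslash GSO_{18}/R_Q$ along the lines of the $GSO_{12}$ double-coset analysis and then kill every non-open coset by one of the two criteria recorded immediately before the proposition---is the natural adaptation of the $GSO_{12}$ argument and is surely what the authors intend.

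One step, however, does not go through as written. The bijection $P_S\backslash GL_4/GSp_4 \leftrightarrow P'_S\backslash SO_6/SO_5$ used for $n=2$ rests on the accidental isogeny $\Spin_6\cong SL_4$, so that $\GSpin_6$ covers both $SO_6$ and $GL_4$. There is no covering $\GSpin_6\to GL_6$, nor any analogous low-rank coincidence linking $GL_6$ to an orthogonal group; ``the same covering trick'' is simply unavailable here. You must compute $P_\sigma\backslash GL_6/GSp_6$ directly, for instance by identifying $GL_6/GSp_6$ with the set of nondegenerate alternating forms on $F^6$ up to scalar and classifying $P_\sigma$-orbits by the ranks of the restriction of the form to the steps of the flag that $P_\sigma$ stabilizes. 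This is routine but is not the one-line reduction you sketch. (Also a minor indexing slip: in $GSO_{18}$ the entries carrying $l$ sit at positions $(1,13),(2,14),(3,15)$, so the immediate elimination condition should read $\sigma(i)<\sigma(12+i)$ for some $i\in\{1,2,3\}$, not $\sigma(9+i)$.)
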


\begin{pro}
Let $U_0 \subset C_Q$ be given by 
$$
\left\{ 
u_0(x,x'):=
\left( 
\bpm 1  & x_1 & x_2 & x_3&x_4&x_5 \\ & 1 &0&0&x_6 &*\\ &&1&0&0 &*\\
&&&1&0&*\\ &&&&1&*\\ &&&&&1 \epm, \; 
\bpm 1& x_1'&x_4' & x_7 & x_8 & *\\ &1 & x_6' & x_9 &*&* 
\\&&1&0&-x_9&*\\&&&1&-x_6&*\\&&&&1&-x_1\\&&&&&1
\epm
\right)
\;:\;\bm
x \in \G_a^9, \\ \\ x' \in \G_a^3\em
\right\},
$$
where entries marked $*$ are determined by symmetry, 
and 
for $x \in \A^9$ and $x' \in \A^3,$ 
let 
$$\psi_{U_0}(u_0(x,x')) = 
\psi( x_1+x_6-x_1'-x_6' +x_9).
$$
Let $SL_2^{\alpha_3}$ be the copy of $SL_2$
generated by $U_{\pm \alpha_{3}},$ and let 
$R_0$ be the product of $U_0$ and $SL_2^{\alpha_3}.$  
Let $\psi_{R_0}$ be the character of $R_0$ which restricts
to $\psi_{U_0}$ and to the trivial character of $SL_2^{\alpha_3}.$ 
Let 
$$\vph^{(R_0, \psi_{R_0})}(c)
= \iq{R_0} \vph(rc) \psi_{R_0}(r) \, dr
= \iq{U_0} \iq{SL_2^{\alpha_3}} \vph(uhc) \psi_{U_0}(u) \, dh\, du,\qquad (c \in C_Q(\A)).$$

Let $V_4= \{ u(x,x'): x_i = x_i', i=1,4,6\}\subset U_0,$
and let
$$
\xi_0 = \bpm 1&0&0\\ 
0&1&0 \\ 0&0&0\\0&0&0\\0&0&1\\0&0&0\epm 
\in \Mat_{6\times 3}(F).
$$
Then
$$
I_{w_2}(f_{\chi;s}, \vph,  \phi)
= \il{Z(\A)V_4(\A) \bs C_Q(\A)}
\vph^{(R_0, \psi_{R_0})}(c)
\il{U^{w_2}(\A) \bs U(\A)} 
f_{\chi;s}(w_2 uc) \left[\omega_\psi( uc).\phi\right](\xi_0)\, du
\, dc.
$$
\end{pro}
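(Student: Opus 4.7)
The plan is to imitate the strategy of Theorem \ref{thm: unfolding}, unfolding $I_{w_2}(f_{\chi;s},\vph,\phi)$ in stages while tracking how each piece of data transforms. First I would give explicit coordinates on $U_Q^{w_2}$ by computing which entries of $u_Q(X,Y,Z)$ conjugate into $P$ under $w_2 = w_1 w[32]$. Since $f_{\chi;s}(w_2 u g)=f_{\chi;s}(w_2 g)$ for $u \in U_Q^{w_2}(\A)$, the inner theta integral $\int_{[U_Q^{w_2}]}\theta(\phi,u_1 u_2 g)\,du_1$ can be performed first. Applying Lemma \ref{Heis Lemma} with $V$ taken to be the image $j(U_Q^{w_2})$ in $\c H_{37}/Z$, this integral collapses the theta sum from $\sum_{\xi \in \Mat_{6\times 3}(F)}$ to a sum over the isotropic subspace perp to $V$, producing a ``partial theta'' built from Schwartz functions on a smaller affine space.

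Next I would use the Levi decomposition $C_Q^{w_2} = M^{w_2}\ltimes U^{w_2}$. The function $g \mapsto f_{\chi;s}(w_2 g)$ is left-invariant by $U^{w_2}(\A)$, so one can push the $U^{w_2}(F)$-sum on the $g$ side across and apply cuspidality of $\vph$ along pieces of $U^{w_2}$. This is where $\xi_0$ enters: as in the $GSO_{12}$ case, $M^{w_2}$ acts on the support of the partial theta and one shows via cuspidality that only a single open orbit survives, choose $\xi_0$ as its representative. The stabilizer of $\xi_0$ in $M^{w_2}\cap C_Q$ is the group whose unipotent radical will match $V_4$, and the restriction of $\psi_l$-conjugate characters along the way will produce the character $\psi_{U_0}$ in the final formula.

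I would then apply an iterated sequence of Fourier expansions in the spirit of Lemma \ref{lem:1} to identify the nontrivial orbit representatives at each step and thereby write the remaining integral as $\vph^{(U_0,\psi_{U_0})}(g)$ times the partial theta evaluated at $\xi_0$. The most subtle feature is promoting $\vph^{(U_0,\psi_{U_0})}$ to $\vph^{(R_0,\psi_{R_0})}$, i.e.\ replacing the unipotent Fourier coefficient by a genuine integral over $[SL_2^{\alpha_3}]$. I expect this to come from a root-exchange argument: after the previous steps, the constant Fourier coefficient of $\vph^{(U_0,\psi_{U_0})}$ along the positive root $U_{\alpha_3}$ remains, but cuspidality of $\vph$ forces the corresponding Fourier--Whittaker expansion along $U_{-\alpha_3}$ to have only a trivial contribution compatible with our data. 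Using that $SL_2^{\alpha_3}$ stabilizes $\psi_{U_0}$ under conjugation (which must be verified directly from the definition of $\psi_{U_0}$), the two one-parameter integrations combine into a full $[SL_2^{\alpha_3}]$ period, yielding $\vph^{(R_0,\psi_{R_0})}$.

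The main obstacle will be the bookkeeping: producing the correct representative $\xi_0$ and simultaneously showing that the stabilizer of the final character in $C_Q$ has unipotent part exactly $V_4$ while its reductive part contains the $SL_2^{\alpha_3}$ needed for the root-exchange step. Verifying the $SL_2^{\alpha_3}$-invariance of $\psi_{U_0}$ and checking compatibility with the Weil representation action (via $j:C_Q\to Sp_{36}$) at the point $\xi_0$ will each require a direct matrix computation using the explicit form of $w_2$. Once these compatibilities are in place, the collapse to $\omega_\psi(uc).\phi$ evaluated at $\xi_0$ and the identification of the domain $Z(\A)V_4(\A)\bs C_Q(\A)$ follow routinely from the Iwasawa decomposition and invariance of Haar measure.
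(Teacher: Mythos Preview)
Your broad outline---compute $U_Q^{w_2}$, collapse the theta series, then iterate Fourier expansions of $\vph$ with orbit analysis under successive stabilizers---matches the paper's approach. One refinement worth noting: the condition that singles out the orbit of $\xi_0$ is not obtained ``via cuspidality'' alone but through the commutator identity $u_Q(\xi,0,0)\,u_1(W)=u_1(W)\,u_Q(\xi,0,0)\,u_Q(0,\xi W,-\xi W\,_t\xi)$. After Fourier-expanding $\vph$ along the Siegel radical $\{u_1(W)\}$ of the $GSO_6$ factor and fixing the nontrivial character $\psi_{2,1}$, the $W$-integration against the partial theta directly imposes the quadratic constraint $\xi_4=\xi_7=0$, $\det\bspm \xi_5&\xi_6\\ \xi_8&\xi_9\espm=1$; cuspidality is then used once more to discard the locus $\xi_1=0$.

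The genuine gap is your mechanism for the $[SL_2^{\alpha_3}]$ period. A reductive $SL_2$-period cannot be manufactured by combining a $U_{\alpha_3}$-integration with a $U_{-\alpha_3}$-integration: root exchange trades one unipotent integral for another unipotent integral, never for a reductive one, and in any case $U_{\alpha_3}\cdot U_{-\alpha_3}$ does not exhaust $SL_2$. The actual source is much simpler and involves no exchange. Observe that rows $3$ and $4$ of $\xi_0$ vanish, and $SL_2^{\alpha_3}$---which in the $GSp_6$ factor acts precisely on those two rows---therefore fixes $\xi_0$ under the Weil-representation action $\xi\mapsto g_1^{-1}\xi g_2$. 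From the explicit description of $C_Q^{w_2}$ one sees $SL_2^{\alpha_3}\subset C_Q^{w_2}=C_Q\cap w_2^{-1}Pw_2$, so $f_{\chi;s}(w_2\cdot)$ is also left $SL_2^{\alpha_3}(\A)$-invariant. Thus $SL_2^{\alpha_3}$ already lies inside the stabilizer $C_2^{w_2}$ of all the data at that stage, and the integral $\int_{[SL_2^{\alpha_3}]}\vph(h\cdot)\,dh$ simply falls out when one factors the outer integration over $C_2^{w_2}(F)\bs C_Q(\A)$. The remaining unipotent Fourier expansions (first on $x_1,x_4$, then on the unipotent radical of the diagonally embedded $GL_2$, via Lemma~\ref{lem:1}) are what produce the character $\psi_{U_0}$ and cut the domain down to $V_4$.
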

\begin{rem}
It was shown in \cite{GRS-PeriodsPoles} that the 
period we  obtain in the $GSp_6$ characterizes the 
image of the theta lift from $SO_6$ to $Sp_6.$ 
\end{rem}
\begin{proof}
First,
$
U_Q^{w_2}$ is the set of  all $u_Q( 0,Y,0)$
such that rows $2,$ $5$ and $6$ of $Y$ are zero. 
It follows that 
$$
\theta^{U_Q^{w_2}}( \phi , u_1 u_2 g):=
\iq{U_Q^{w_2}} \theta( \phi , u_1 u_2 g) \, du_1
= \sum_{\xi } [\omega_\psi(u_2 g).\phi]( \xi),
$$
where the sum is over $\xi \in \Mat_{6 \times 3}(F)$ 
such that rows $3,4$ and $1$ are zero.
Next, the identification of $C_Q$ with a subgroup of $GSp_6 \times GSO_6$ identifies $C_Q^{w_2}$ with the subset of elements 
of the form 
\begin{equation}\label{c w coords}
\left( 
\bpm t  & x_1 & x_2 & x_3&x_4&x_5 \\ & a &0&0&b &*\\ &&a'&b' &0 &*\\
&&c'&d'&0&*\\ &c&&&d&*\\ &&&&&t\lambda \epm, \; 
\bpm g&W \\ & _tg^{-1}\lambda \epm
\right), 
\; 
t \in GL_1, \; \bpm a&b\\ c&d \epm,  \bpm a'&b'\\ c'&d' \epm \in GL_2, \;
g \in GL_3.\end{equation}
  
Now we can expand $\vph$ along the abelian unipotent 
 subgroup which consists of elements of the form 
 $u_1(W):=\left( I_6, \;\bspm I_3 & W \\ & I_3 \espm\right),
 \; W \in \mnsw 3.$
 The constant term is of course zero.
 The group $C_Q^{w_2}$ acts transitively on the nontrivial characters.
 As a representative for the open orbit we select the 
 character $\psi_{2,1}(u_1(W)) := \psi(W_{2,1}).$
The stabilizer of this representative 
can be described in terms of the coordinates from \eqref{c w coords}
as the set of 
 elements of $C_Q^{w_2}$ such that $g \in GL_3$  is of the form $\bspm t_1 & u \\ & g_1 \espm$
 with $g_1 \in GL_2$ and $\det g_1 = \lambda.$ 
 Denote this group by $C_1^{w_2}.$
Now we write the integral as a double integral, with the 
inner integral being 
\begin{equation}\label{inner integral}
\iq{\mnsw{3}}\il{U_Q^{w_2}(\A) \bs U_Q(\A) }
f_{\chi;s}( w_2 u_2 u_1(W) g)
\theta^{U_Q^{w_2}}( \phi, u_2 u_1(W) g)\, du_2 \psi^{-1}_{2,1}(W)\, dW.
\end{equation}
Now, 
$$
u_Q( \xi , 0, 0 ) 
u_1(W) = 
u_1(W) u_Q(\xi, 0,0) u_Q( 0,\xi W, -\xi W \, _t\xi ),
\qquad ( W \in \mnsw{3}, \; \xi \in \Mat_{6\times 3}).
$$
It follows that \eqref{inner integral} is equal to 
$$
\iq{\mnsw{3}}\il{U_Q^{w_2}(\A) \bs U_Q(\A) }
f_{\chi;s}( w_2 u_2 g)
\sum_{\xi }
[\omega_\psi(u_2  g).\phi](\xi)
\psi_l( \xi W \,_t\xi) \psi^{-1}_{2,1}(W)\, du_2 \, dW,
$$
with $\xi$ summed over $6\times 3$ matrices such that
rows $3,4$ and $6$ are zero.  
Clearly the integral on $W$ picks off the terms such that
$\psi_l( \xi W \,_t\xi) \psi^{-1}_{2,1}(W)$ is trivial. 
Now, direct calculation shows that 
$$
\xi = \bpm \xi_1 & \xi_2 & \xi_3 \\\xi_4 & \xi_5 & \xi_6 \\ 
0&0&0\\ 0&0&0 \\ \xi_7& \xi_8&\xi_9 \\ 0&0&0\epm, 
\qquad W = \bpm y_1 &y_2 & 0 \\ y_3 & 0& -y_2 \\ 
0 & -y_3&-y_1 \epm,\implies
\psi_l ( \xi W \, _t \xi ) 
= \psi\left( \det \bpm
y_3 & -y_1 & y_2 \\ \xi_4 & \xi_5 & \xi_6 \\  \xi_7& \xi_8&\xi_9
\epm \right).
$$
So, in the coordinates above, the condition for $\psi_l( \xi W \,_t\xi) \psi^{-1}_{2,1}(W)$ to be trivial is $\xi _4 = \xi_7 =0$
and $\det \bpm \xi_5 & \xi_6 \\  \xi_8&\xi_9\epm = 1.$
Observe that if $\xi_1$ is also zero, then 
the function 
$g \mapsto [\omega_\psi(g).\phi](\xi)$ is invariant on the left by 
$\{ ( I_6, \bspm u&\\ & _tu^{-1} \espm )\in C_Q^{w_2}: u = \bspm 1& x&y\\ &1&\\&&1\espm\in GL_3 \}.$
Thus, the contribution from such $\xi$ is trivial by cuspidality.
The group $C_1^{w_2}$ permutes the remaining terms transitively,
and the stabilizer of 
$\xi_0$
is 
$$
C_2^{w_2}:=
\left \{ \left( 
\bpm t  & x_1 & x_2 & x_3&x_4&x_5 \\ & a &0&0&b &*\\ &&a'&b' &0 &*\\
&&c'&d'&0&*\\ &c&&&d&*\\ &&&&&t\lambda \epm, \; 
\bpm g&W \\ & _tg^{-1}\lambda \epm
\right)\in C_1^{w_2}\;: \qquad 
g = \bpm t& x_1&x_4 \\ & a&b \\ & c & d \epm
\right\}.
$$
Expanding first on $x_1$ and $x_4,$ and then on the unipotent
radical of the diagonally embedded $GL_2,$ and using lemma \ref{lem:1} two more times gives the result.
\end{proof}


\end{document}